\newtheorem{thm}{Theorem}
\newtheorem{cor}[thm]{Corollary}
\newtheorem{lem}[thm]{Lemma}
\newtheorem{prop}[thm]{Proposition}
\newtheorem{defn}[thm]{Definition}
\newtheorem{theorem-question}[thm]{Theorem-Question}
\newenvironment{exafont}{\begin{bf}}{\end{bf}}
\newenvironment{example}{\vspace{0.3cm}\par\noindent\refstepcounter{thm}\begin{exafont}Example \thethm\end{exafont}\hspace{\labelsep}}{\vspace{0.3cm}\par}
\newenvironment{remark}{\vspace{0.3cm}\par\noindent\refstepcounter{thm}\begin{exafont}Remark \thethm\end{exafont}\hspace{\labelsep}}{\vspace{0.3cm}\par}
\newcommand{\ssection}[1]{%
    \bigskip \begin{center} {\sc #1} \bigskip \end{center}}
\newcommand{\bA}{\underline{A}}
\newcommand{\bB}{\underline{B}}
\newcommand{\bC}{\underline{C}}
\newcommand{\cC}{\mathcal{C}}
\newcommand{\cX}{\mathcal{X}}
\newcommand{\ZZ}{\mathbb{Z}}
\newcommand{\eps}{\varepsilon}
\DeclareMathOperator{\Ob}{Ob}
\DeclareMathOperator{\Hom}{Hom}
\DeclareMathOperator{\grHom}{grHom}
\DeclareMathOperator{\add}{add}
\DeclareMathOperator{\Rad}{Rad}
\DeclareMathOperator{\ml}{-mod}
\DeclareMathOperator{\perf}{-perf}
\title{Homotopy, homology, and $GL_2$.}
\author{Vanessa Miemietz and Will Turner}
\date{23rd June 2008}
\begin{document}

\maketitle

\begin{abstract}
We define weak $2$-categories of finite dimensional algebras with bimodules,
along with collections of operators $\mathbb{O}_{(c,x)}$ on these $2$-categories.
We prove that special examples $\mathbb{O}_p$ of these operators control 
all homological aspects of the rational representation theory 
of the algebraic group $GL_2$, over a field of positive characteristic.
We prove that when $x$ is a Rickard tilting complex, 
the operators $\mathbb{O}_{(c,x)}$ honour derived equivalences, in a differential graded setting.
We give a number of representation theoretic corollaries, 
such as the existence of tight $\mathbb{Z}_+$-gradings on Schur algebras $S(2,r)$,
and the existence of braid group actions on the derived categories of blocks of these Schur algebras. 
\end{abstract}

\ssection{Introduction.}

Group theory has for a long time looked beyond itself for sources of intuition and understanding.  
Given that any topological space has a fundamental group, 
it is not surprising that one of those sources has been the theory of topological spaces.
Homology groups, originally defined as topological invariants, have developed into the techniques of homological algebra, which have proved a powerful tool \cite{BK}, 
and an intriguing source of questions in group representation theory \cite{Broue}.

We might expect more subtle topological invariants, such as homotopy groups, 
or invariants appearing in quantum field theory, 
to give rise to mathematical techniques, which also find application in group theory. 
One broad setting for homotopical algebra adopts a number of guises, 
which together form the subject of \emph{$n$-categories} \cite{Cheng}.
This setting also appears to be well adapted for the study of topological quantum field theories \cite{BaezDolan}.
The existence of gauge theories suggests that $n$-categories 
should indeed play a role in group representation theory, at least for small $n$ 
\cite{BFK}, \cite{ChuangRouquier}, \cite{Stroppel}.

In this paper, we present an example in which the formalism of $2$-category theory 
finds application in modular representation theory.
We define weak $2$-categories, and collections of combinatorial operators on these $2$-categories. 
Special examples of these operators control all homological aspects of the
rational representation theory of $GL_2$,  over a field of positive characteristic.

To be more precise, let $F$ be a field of characteristic $p>0$.
We give the collection $\mathcal{T}$ of $F$-algebras 
with a bimodule the structure of a weak $2$-category.
To every object $(c,x)$ of such a $2$-category which is positively graded, 
we associate a $2$-functor $\mathbb{O}_{(c,x)}$.
For a certain graded algebra $c_p$, of finite representation type, 
and a certain $c_p$-$c_p$-bimodule $x_p$, 
we thus obtain an operator $\mathbb{O}_p$.
Applying the operator $\mathbb{O}_p^n$ to the one-dimensional algebra $F$, 
with its trivial bimodule $F$,
we obtain an algebra, denoted $E_n$, with a bimodule. 
We prove that every block of rational $GL_2(F)$-modules is equivalent to the 
category of modules over an inverse limit $\lim_n E_n$ of these algebras.

The operators $\mathbb{O}_{(c,x)}$ are obtained by twisting a tensor product with a bimodule.
We perform a careful analysis of the effect of such twisted tensor products, 
in a differential graded setting.
For example, we prove if $x$ is a Rickard tilting complex, then the operator $\mathbb{O}_{(c,x)}$
respects derived equivalences.

Our theory links the combinatorially defined operator 
$\mathbb{O}_p$ with the representation theory of $GL_2(F)$, 
and has a number of corollaries. 
For example, the Schur algebra $S(2)$, 
which controls the category of polynomial representations of $GL_2(F)$, 
has a tight $\mathbb{Z}_+$-grading.
This follows since the operator $\mathbb{O}_p$  is tightly $\mathbb{Z}_+$-graded.
Furthermore, derived categories of certain blocks of the Schur algebra $S(2,r)$ admit an action of the braid group on $p$ braids. 
This follows from the existence of a braid group action on the derived category of $c_p$, 
established by Khovanov-Seidel and Rouquier-Zimmermann,
along with the analysis of a lift of the operator $\mathbb{O}_p$ to a differential graded setting.

We actually overturn a number of different algebras isomorphic to $E_n$.
One such, denoted $A_n$ in the paper, is a quiver algebra, modulo quadratic relations; 
a second, denoted $B_n$, is the basic algebra of a Schur algebra block;
a third, $C_n$, is obtained via the iteration of a certain trivial extension operator; 
a fourth, $D_n$, is a quotient of the basic algebra of a group algebra of finite special linear group.
We thus obtain a detailed understanding of the representation theory of all of these algebras.

The first section of this paper is combinatorial, 
and is concerned with establishing the isomorphism of various algebras,
applying work of Koshita, and Erdmann-Henke.
In the second section, we check out the $2$-category theory involved in our constructions.
In the third, we prove that the operators $\mathbb{O}_{(c,x)}$ respect derived equivalence in a differential graded setting.
In the fourth, we study the special operators $\mathbb{O}_p$.
In the fifth, we draw the various strands of argument together, 
and present applications to the representation theory of $GL_2(F)$.

\ssection{1. Four algebras.}\label{algebras}

Let $F$ be an algebraically closed field of characteristic $p >0$.

Let $n \geq 0$ be an integer.
Suppose $p \geq 3$.

We define four algebras, $A_n$, $B_n$, $C_n$, and $D_n$, eventually aiming to show they are all isomorphic:

\begin{enumerate}\label{algs}
\item \label{1} We define an algebra $A_n$ in terms of quivers and relations. Let $Q_n'$ be the quiver whose set of vertices are all tuples 
$(a_0, a_1, \dots, a_n) \in \{0, \dots, p-1\}^{n+1}$ in which, 
for a tuple $a= (a_0, a_1, \dots, a_n)$ and $1 \leq i \leq n$, 
there is an arrow $f_{i,\pm 1}^{a}$ from $a$ to 
$(a_0, a_1, \dots,a_{i-2}, p-2-a_{i-1}, a_i \pm 1, \dots, a_n)$ whenever $a_{i-1}$ is not $p-1$ and $a_i \pm 1 \in \{0, \dots, p-1\}$. 
Our desired quiver $Q_n$ is the connected component of $Q_n'$ containing $(0,0, \dots, 0)$.

In order to avoid unnecessary supercripts, we will write $f_{i,\pm 1}:= \sum f_{i,\pm 1}^{a}$ where the sum is over all vertices 
$a$ such that $f_{i,\pm 1}^{a}$ is an arrow in $Q_n$. 
Then $f_{i,\pm 1}^{a}=f_{i,\pm 1}a$ lies in the path algebra of $Q_n$.

Our first algebra $A_n$ is the path algebra of $Q_n$  modulo the ideal $I$ generated by the following relations:

\begin{itemize}
\item[(1)]$f_{i,\eps_1}f_{j,\eps_2}a-f_{j,\eps_2}f_{i,\eps_1}a$ if $i \neq j-1,j,j+1$
\item[(2)]$f_{i,\eps}f_{i,\eps} a$ for any $i, \eps$ and $a$ 
\item[(3)]$f_{i,\eps}f_{i,-\eps} a$ if $a_i=p-1$
\item[(4)]$f_{i,1}f_{i,-1} a -f_{i,-1}f_{i,1} a$ if $a_i>0$
\item[(5)]$f_{i,\eps_1}f_{i+1,\eps_2} a -f_{i+1,\eps_2}f_{i,-\eps_1}a$ if $a_{i}+ \eps_1, p-2-a_{i}- \eps_1$ are both in $\{0, \dots, p-1\}$,  and  $a_{i-1},a_i \neq p-1$.
\end{itemize}

\item \label{2} 
Let $S(2) = \oplus_{r \geq 0} S(2,r)$ denote the Schur algebra associated to $GL_2$, 
namely the graded dual of the coalgebra of regular functions
on the algebra of $2 \times 2$ matrices over $F$ \cite{Green}. 
Let $B_n$ be the basic algebra of a block of a Schur algebra $S(2,r)$ with $p^n$ simple modules, 
e.g. the principal block of $S(2,p^{n+1}-1)$.
\item \label{3} We now recall the construction of an algebra $\cC_p(A)$ where $A$ is an associative algebra, 
and $T$ is a self-dual bimodule over $A$, 
yielding another such algebra with a natural $\ZZ_+$-grading \cite{MT}. 
In order to construct $\cC_p(A)$ from $A$ and $T$, we first define an algebra $\mathcal B$ as the infinite matrix algebra
$$\left(   
\begin{array}{cccccc}
\ddots & {}_{i-2}T_{i-1} & 0 &  \cdots & \\
0& A_{i-1} & {}_{i-1}T_i& 0 & \cdots&\\
\cdots & 0 &  A_{i} & {}_{i}T_{i+1}& 0& \cdots\\
 &\cdots  &0&  A_{i+1} & {}_{i+1}T_{i+2}& 0\\
 & &&& A_{i+2}  &\ddots\\
&& &&&  \ddots \\
\end{array}
\right)$$
where all the $A_i$ and ${}_{i-1}T_i$ are isomorphic copies of $A$ and $T$ respectively. Denote by $\cC(A)$ the trivial extension of $\mathcal B $ by the $(\mathcal B,\mathcal B)$-bimodule $$\mathcal B^{(*)} = \bigoplus_{i \in \mathbb{Z}} \Hom_F(\mathcal B 1_{A_i},F).$$
Then  $\cC_p(A)$ is the subquotient of $\cC(A)$ given by 
$$\cC_p(A)= \eta_1^{p+1} \cC(A)\eta_1^{p+1}/\eta_1^{p+1} \cC(A)1_{A_{p+1}} \cC(A)\eta_1^{p+1} ,$$ 
where $\eta_1^{p+1}=\sum_{1 \leq i \leq p+1}1_{A_i}$.
The bimodule $$\cX_p(A) = \eta_1^{p} \mathcal{C}(A) \eta_2^{p+1}$$ admits the natural structure of a 
self-dual $\mathcal{C}_p(A)$-$\mathcal{C}_p(A)$-bimodule.
The operator $\mathcal{C}_p: (A,T) \mapsto (\mathcal{C}_p(A), \cX_p(A))$ 
acts on the collection of algebras with a self dual bimodule.
If we apply the $n$-fold composition operator $\mathcal{C}_p^n$ to the pair $(F,F)$, we obtain an algebra with a self-dual bimodule.
We call the algebra obtained in this way $C_n$. 

\item \label{4} Let $\widetilde{FSL(2,p^{m})}$ be the basic algebra of the group algebra of $SL(2,p^m)$ over an algebraically closed field of characteristic $p$. This algebra has a presentation in terms of quiver and relations by Koshita 
(\cite{Ko2} ,\cite{Kop}, see also \cite{Nebe} \cite{Nebe2}). 
We define our fourth algebra $D_n$ to be a quotient of $\widetilde{FSL(2,p^{n+2})}$. 
Let $e$ be the sum over all idempotents in Koshita's quiver 
which correspond to vertices not in the connected component of 
$(0,\dots,0)$. 
Let $f$ be the sum of idempotents corresponding to the vertices in the connected component of $(0,\dots,0)$, 
where the last entry in the $n+2$-tuple labelling the vertex is nonzero.
Then we define $D_n:=\widetilde{FSL(2,p^{n+2})}/\widetilde{FSL(2,p^{n+2})}(e+f)\widetilde{FSL(2,p^{n+2})}$.

\end{enumerate}
%


The main result of this section is

\begin{thm} \label{allfour}
All four algebras $A_n$, $B_n$, $C_n$, and $D_n$ are isomorphic.
\end{thm}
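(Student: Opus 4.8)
The plan is to establish the chain of isomorphisms $A_n \cong B_n \cong C_n \cong D_n$ by comparing each algebra to a common combinatorial model, rather than building maps directly between all pairs. The quiver algebra $A_n$ is the most concrete object, so I would make it the hub: the strategy is to prove $C_n \cong A_n$ by computing the quiver and relations of $C_n$ explicitly from the iterated construction $\mathcal{C}_p^n(F,F)$, and to prove $B_n \cong A_n$ and $D_n \cong A_n$ by invoking, respectively, the Erdmann--Henke description of Schur algebra blocks $S(2,r)$ and Koshita's presentation of $\widetilde{FSL(2,p^m)}$.

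First I would analyze $C_n = \mathcal{C}_p^n(F,F)$ inductively. The base case $C_0 = F$ matches $A_0$ (a point). For the inductive step, I would show that if $C_{n-1}$ has quiver $Q_{n-1}$ with the relations (1)--(5) indexed by $n-1$ coordinates, then applying $\mathcal{C}_p$ — which forms the band matrix algebra $\mathcal{B}$ over $p+1$ copies of $A = C_{n-1}$ glued by the self-dual bimodule $T = \cX_p(C_{n-1})$, takes the trivial extension, and then passes to the subquotient $\eta_1^{p+1}\cC(A)\eta_1^{p+1}/(\ldots)$ — produces exactly $Q_n$ with relations (1)--(5). Concretely, the new coordinate $a_n \in \{0,\dots,p-1\}$ indexes which of the $p$ surviving matrix-block copies a vertex sits in, the arrows $f_{n,\pm 1}$ come from the bimodule maps ${}_iT_{i+1}$ and from the trivial-extension part $\mathcal{B}^{(*)}$, and the coordinate flip $a_{n-1} \mapsto p-2-a_{n-1}$ in the definition of $Q_n'$ reflects the self-duality of $\cX_p$. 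Relations (2)--(4) should drop out of the trivial extension square-zero condition and the structure of $\cX_p(C_{n-1})$, while relation (5) should encode the compatibility of the new bimodule action with the old arrows $f_{n-1,\pm1}$; relations (1) for $i\ne j-1,j,j+1$ persist because distant coordinates are manipulated independently. This bookkeeping — tracking idempotents $1_{A_i}$, the cutoff ideal $\eta_1^{p+1}\cC(A)1_{A_{p+1}}\cC(A)\eta_1^{p+1}$, and the self-duality isomorphism through the induction — is where I expect the real work to lie, and it is the main obstacle.

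Next I would handle $B_n \cong A_n$. Here I would appeal to the work of Erdmann--Henke (and the structure theory of $S(2,r)$) cited in the introduction: a block of $S(2,r)$ with $p^n$ simple modules has a known quiver and quadratic relations, and I would match this presentation coordinate-by-coordinate with $(Q_n, I)$, the labelling of simples by $p$-adic-type tuples $(a_0,\dots,a_n)$ corresponding to the combinatorics of weights in the block. The principal block of $S(2,p^{n+1}-1)$ is the clean representative to check. Finally, for $D_n \cong A_n$, I would start from Koshita's quiver-and-relations presentation of $\widetilde{FSL(2,p^{n+2})}$, identify the connected component of $(0,\dots,0)$ and the idempotent $f$ picking out vertices with nonzero last entry, and verify that quotienting by the ideal generated by $e+f$ leaves precisely the quiver $Q_n$ (the surviving vertices being $(n+1)$-tuples with last entry zero, reindexed) with Koshita's relations reducing to (1)--(5). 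Both of these steps are essentially a translation between two explicit combinatorial presentations, so the difficulty is organizational rather than conceptual; the genuinely delicate argument remains the inductive identification of $C_n$ with $A_n$, since that is where the $2$-categorical operator $\mathcal{C}_p$ must be unwound by hand.
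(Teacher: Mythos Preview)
Your identification $A_n \cong D_n$ via Koshita's presentation matches the paper exactly (Lemma~\ref{hatnohat}). But the other two links diverge substantially from the paper's route, and one of them has a gap.

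For $B_n \cong A_n$ you assume that Erdmann--Henke already supply a full quiver-with-relations presentation of a Schur algebra block which you can then match against $(Q_n,I)$. They do not: what is available from \cite{He2}, \cite{EH}, \cite{MT} is structural information about filtrations of projectives and tilting modules, not a presentation. The paper instead imports a third ingredient you do not mention, DeVisscher's epimorphism $\widetilde{FSL(2,p^{n+2})} \twoheadrightarrow \widetilde{S(2,p^{n+2}-1)}\oplus \widetilde{S(2,p^{n+2}-2)}$, which after passing to quotients yields a surjection $\phi: D_n \twoheadrightarrow B_n$ (Lemma~\ref{AontoB}). The bulk of the proof (Proposition~\ref{AB}) is then an inductive dimension argument: one compares a filtration of $P_{A_n}(a)$ coming from the relations (1)--(5) with the known $P/T/P^*$ filtration of $P_{B_n}(a)$ from \cite{MT} and \cite{EH}, and squeezes the layers using the inductive hypothesis $A_{n-1}\cong B_{n-1}$ and self-duality until the surjection is forced to be an isomorphism. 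So the hard work sits here, not in the $C_n$ step.

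Conversely, your proposed direct inductive computation of the quiver and relations of $C_n=\mathcal{C}_p^n(F,F)$ is avoided entirely in the paper. Once $A_n\cong B_n$ is established, the paper observes that the $\mathbb{Z}_+^n$-grading on $A_n$ transports to $B_n$ and is compatible with the filtration used in \cite{MT} to prove $grB_n\cong C_n$; hence $B_n\cong grB_n\cong C_n$ in one line. Your plan would trade this short grading argument for the very bookkeeping you flag as the main obstacle, while leaving the genuine difficulty in $A_n\cong B_n$ unaddressed.
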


We prove the theorem via a sequence of lemmas and propositions.

\begin{remark} \label{inductiveconstruction}
The algebra $A_1$ is the well-known quasihereditary algebra
$$c_p = 
\xymatrix{F(\overset{1}{\bullet} \ar@/^/[r]^{\xi_1} &\overset{2}{\bullet}\ar@/^/[l]^{\eta_2} \ar@/^/[r]^{\xi_1} &\ar@/^/[l]^{\eta_2} \overset{3}{\bullet} &\cdots &\overset{p-1}{\bullet}\ar@/^/[r]^{\xi_{p-1}} & \ar@/^/[l]^{\eta_{p-1}} \overset{p}{\bullet})/I },$$
where
$I=(\xi_{i+1} \xi_{i}, \eta_{i} \eta_{i+1}, \xi_i \eta_i - \eta_{i+1} \xi_{i+1} , \xi_{p-1} \eta_{p-1})$,
for $1 \leq i \leq p-2$,
and in the tuple notation vertex $1$ corresponds to the tuple $(a_0,a_1)=(0,0)$, 
whilst vertex $2$ corresponds to $(a_0,a_1)=(p-2,1)$, vertex $3$ corresponds to $(a_0,a_1)=(0,2)$, etc.

We can inductively define the quiver $Q_n$ of $A_n$ as the quiver 
$$\xymatrix@C=15pt{       & Q_{n-2} \ar@/^/[d]^{h_{n}} \ar@/^/[r]^{g_{n-1}} & \ar@/^/[l]^{g_{n-1}} Q_{n-2} \ar@/^/[d]^{h_{n}}   &\cdots &Q_{n-2}\ar@/^/[d]^{h_{n}}\ar@/^/[r]^{g_{n-1}}&\ar@/^/[l]^{g_{n-1}} Q_{n-2}\ar@/^/[d]^{h_{n}}\ar@/^/[r]^{g_{n-1}}&
\ar@/^/[l]^{g_{n-1}}Q_{n-2}\\
Q_{n-2} \ar@/^/[r]^{g_{n-1}}& \ar@/^/[l]^{g_{n-1}}Q_{n-2}\ar@/^/[u]^{h_{n}}\ar@/^/[d]^{h_{n}}\ar@/^/[r]^{g_{n-1}} &
\ar@/^/[l]^{g_{n-1}}Q_{n-2}\ar@/^/[u]^{h_{n}}\ar@/^/[d]^{h_{n}} &\cdots &
Q_{n-2}\ar@/^/[u]^{h_{n,}}\ar@/^/[d]^{h_{n}}
\ar@/^/[r]^{g_{n-1}} & \ar@/^/[l]^{g_{n-1}}Q_{n-2}\ar@/^/[u]^{h_{n}}\ar@/^/[d]^{h_{n}}&\\
& Q_{n-2} \ar@/^/[u]^{h_{n}} \ar@/^/[r]^{g_{n-1}}  &\ar@/^/[l]^{g_{n-1}} Q_{n-2} \ar@/^/[u]^{h_{n}}   &\cdots & Q_{n-2}\ar@/^/[u]^{h_{n}} \ar@/^/[r]^{g_{n-1}}&\ar@/^/[l]^{g_{n-1}} Q_{n-2} \ar@/^/[u]^{h_{n}} \ar@/^/[r]^{g_{n-1}}&\ar@/^/[l]^{g_{n-1}} Q_{n-2}\\
&\vdots &\vdots&&\vdots&\vdots&\\
Q_{n-2} \ar@/^/[r]^{g_{n-1}}& \ar@/^/[l]^{g_{n-1}} Q_{n-2} \ar@/^/[d]^{h_{n}} \ar@/^/[r]^{g_{n-1}}& \ar@/^/[l]^{g_{n-1}} Q_{n-2} \ar@/^/[d]^{h_{n}} &\cdots & Q_{n-2} \ar@/^/[d]^{h_{n}} \ar@/^/[r]^{g_{n-1}} & \ar@/^/[l]^{g_{n-1}} Q_{n-2} \ar@/^/[d]^{h_{n}}&  \\
& Q_{n-2} \ar@/^/[u]^{h_{n}} \ar@/^/[r]^{g_{n-1}}  & \ar@/^/[l]^{g_{n-1}} Q_{n-2} \ar@/^/[u]^{h_{n}}  & 
\cdots &Q_{n-2} \ar@/^/[u]^{g_{n}}\ar@/^/[r]^{g_{n-1}}&
\ar@/^/[l]^{g_{n-1}} Q_{n-2} \ar@/^/[u]^{h_{n}}\ar@/^/[r]^{g_{n-1}}&
\ar@/^/[l]^{g_{n-1}} Q_{n-2} \\
}$$ 
where the above diagram has $p$ rows, each containing $p$ copies of $Q_{n-2}$;
where $h_n$ denotes a collection of arrows pointing from the vertices of one copy of $Q_{n-2}$ 
to the vertices of a second copy of $Q_{n-2}$, so that the correspondence defined by these arrows 
is the identity map on vertices of $Q_{n-2}$; 
where each row of the diagram is inductively identified with a copy $Q_{n-1}$; 
and where $g_n$ is identified with the collection of 
arrows pointing from one row to the next, therefore a union of $p-1$ copies of $h_{n-1}$.

The sum $f_{n,1}$ (resp. $f_{n,-1}$) 
of arrows in $Q_n$ is equal to the sum of all arrows which point downwards (resp. upwards) in the diagram,
whilst the sum $f_{n-1,1}$ (resp. $f_{n-1,-1}$)
is equal to the sum of arrows which point rightwards (resp. leftwards) in the diagram in odd rows, 
and leftwards (resp. rightwards) in even rows.

Relations $R_n$ for the algebra $A_n$ can also be defined inductively, as follows:

$\bullet $ Each copy of $Q_{n-2}$ in the diagram is subject to relations $R_{n-2}$,
defining a split subalgebra isomorphic to $A_{n-2}$.

$\bullet$ Each row $Q_{n-1}$ of the diagram is subject to relations $R_{n-1}$, 
defining a split subalgebra isomorphic to $A_{n-1}$.

$\bullet$ Each column of the diagram is subject to relations $R_1$,
defining a split subalgebra isomorphic to $A_{n-2} \otimes c_p$, 
where the $i^{th}$ vertex of $c_p$ corresponds to a copy of $Q_{n-2}$ in the $i^{th}$ row.

$\bullet$ All squares commute.

$\bullet$ $H_{n} x = x H_{n}$,
for every arrow $x$ in $Q_{n-2}$, and every collection $h_n$ in the diagram;
where $H_n$ denotes the sum of arrows in $h_n$;
and where $x$ is understood to lie in the subquiver at the source of $h_{n}$, 
on the left hand side of this relation, 
whilst $x$ is understood to lie in the subquiver 
at the target of $h_{n}$, on the right hand side of this relation.

\end{remark}

\begin{lem}\label{hatnohat}
$A_n \cong D_n$.
\end{lem}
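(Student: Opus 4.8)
The plan is to compare the two algebras vertex-by-vertex and arrow-by-arrow, using Koshita's explicit quiver-and-relations presentation of $\widetilde{FSL(2,p^m)}$ with $m = n+2$. First I would recall Koshita's description: the vertices of her quiver are indexed by $(n+2)$-tuples over $\{0,\dots,p-1\}$ (weights for $SL(2,p^{n+2})$), and the arrows encode the structure of projective indecomposables, governed by the action of adding/subtracting $1$ in successive coordinates with the characteristic-$p$ ``carrying'' given by the reflection $a_{i-1} \mapsto p-2-a_{i-1}$. I would match this against the quiver $Q_n'$ from item (\ref{1}): the arrows $f_{i,\pm 1}^a$ from $a$ to $(a_0,\dots,p-2-a_{i-1}, a_i\pm 1,\dots,a_n)$ should correspond precisely to Koshita's arrows (modulo a bookkeeping shift in the index of tuples, since $D_n$ lives on $(n+2)$-tuples but after killing the idempotents $e+f$ one effectively looks at a connected component and a quotient).

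The core of the argument is then to check that passing to the quotient $\widetilde{FSL(2,p^{n+2})}/\widetilde{FSL(2,p^{n+2})}(e+f)\widetilde{FSL(2,p^{n+2})}$ has exactly the effect of (a) restricting to the connected component of $(0,\dots,0)$ — this is what killing $e$ does — and (b) killing the vertices whose last coordinate is nonzero, i.e.\ collapsing the final tuple-entry — this is what killing $f$ does, and it is what cuts an $(n+2)$-tuple presentation down to something indexed by $(n+1)$-tuples, matching $Q_n$. One must verify that the induced quiver on the surviving idempotents is $Q_n$ and that Koshita's relations, after this two-step quotient, become exactly relations (1)--(5). Relations (2) and (3) (the ``zero'' relations $f_{i,\eps}^2 = 0$ and $f_{i,\eps}f_{i,-\eps} = 0$ at a boundary weight) should come straight from the socle/Loewy structure of Koshita's projectives; the commutation relation (1) for non-adjacent indices is a locality statement; (4) and (5) are the genuinely $SL_2$-flavoured relations and will require care to extract from Koshita's presentation.

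The main obstacle I expect is precisely the translation of Koshita's relations. Her presentation (in \cite{Ko2}, \cite{Kop}) is stated in a different combinatorial language — typically in terms of generators attached to ``fundamental'' weight moves with relations describing how composites factor through simple modules in the heart of projectives — and it is not stated modulo any quotient. So the work is: (i) identify which of Koshita's generators survive in $D_n$ and express them in terms of the $f_{i,\pm 1}$; (ii) show the relations of $D_n$ inherited from Koshita are generated by the images of (1)--(5), which amounts to checking both that each of (1)--(5) holds in $D_n$ and that Koshita's relations follow from (1)--(5) together with the two idempotent-quotient relations; and (iii) confirm dimension counts match, e.g.\ by comparing the number of paths of each length between fixed vertices, so that no relations are missing. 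I would organise (iii) as a sanity check rather than a proof, since matching the quiver and a complete set of relations suffices; the quadratic nature of the relations in $A_n$ (all of (1)--(5) are length-$2$) makes the path-counting tractable, because one only needs to know the length-$2$ part of Koshita's algebra on the relevant component.

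Concretely I would proceed: (1) fix the bijection on vertices between Koshita's connected-component-of-$(0,\dots,0)$ with last entry zero and the vertices of $Q_n$; (2) match arrows, checking the source/target rule and that both quivers have the same arrows (using that $SL(2,p^{n+2})$ projectives have the ``adjacency'' structure dictated by $p$-adic expansions, exactly as encoded by the $f_{i,\pm1}^a$); (3) verify relations (1)--(5) hold in $D_n$ using Koshita's structure of projective indecomposables; (4) verify Koshita's relations are consequences of (1)--(5) plus the quotient, using that the quotient has killed all the vertices/arrows not visible in $Q_n$; and (5) conclude the induced map $A_n \to D_n$ is an isomorphism. The inductive description of $Q_n$ and $R_n$ in Remark~\ref{inductiveconstruction} should streamline steps (2) and (4), since Koshita's presentation itself has a recursive flavour reflecting the tower $SL(2,p) \subset SL(2,p^2) \subset \cdots$, and one can try to set up the whole comparison by induction on $n$, with $A_1 \cong c_p$ (the block of $SL(2,p)$, i.e.\ Koshita's $m=1$ case with the analogous quotient) as the base case.
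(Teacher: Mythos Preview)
Your proposal is correct and follows the same idea as the paper's proof, namely matching the quiver and relations of $A_n$ against Koshita's presentation of $\widetilde{FSL(2,p^{n+2})}$ after taking the connected component of $(0,\dots,0)$ and quotienting out the idempotents with nonzero last coordinate. The paper, however, treats this as essentially immediate: the quiver $Q_n$ and the relations (1)--(5) were \emph{defined} so as to be exactly what one reads off from Koshita's presentation after that two-step quotient, so the proof is a one-sentence observation rather than a multi-step verification. Your steps (1)--(5), the proposed dimension count, and the inductive scheme via Remark~\ref{inductiveconstruction} are all unnecessary here --- there is no independent definition of $A_n$ to reconcile with $D_n$, because the presentation of $A_n$ is precisely the output of the construction you describe.
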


\proof This is true since the quiver and relations defining $A_n$ are exactly obtained from Koshita's quiver and relations for $\widetilde{FSL(2,p^{n+2})}$, 
taking first the connected component of $(0,\dots,0)$, and then factoring out all vertices of the form 
$(a_0, a_1, \dots, a_n, a_{n+1})$ with $a_{n+1} > 0$, 
before identifying the vertex $(a_0, a_1, \dots, a_n, 0)$ with $(a_0, a_1, \dots, a_n)$.
\endproof

\begin{lem}\label{AontoB}
There exists an epimorphism $\phi: D_n \twoheadrightarrow B_n$.
\end{lem}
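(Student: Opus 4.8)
The plan is to realize $B_n$ as a quotient of $D_n$ by exhibiting a surjective algebra homomorphism, using the known structure of both sides. On the $D_n$ side we have the explicit quiver and relations inherited from Koshita's presentation of $\widetilde{FSL(2,p^{n+2})}$ (via Lemma~\ref{hatnohat}, $D_n\cong A_n$, so we may as well work with the quiver $Q_n$ and relations $R_n$). On the $B_n$ side we have the block of the Schur algebra $S(2,p^{n+1}-1)$ with $p^n$ simple modules, whose basic algebra has been described by Erdmann--Henke (and earlier Doty--Erdmann--Henke--others) in terms of quiver and relations. The core of the argument is that the Erdmann--Henke quiver for this Schur algebra block is a \emph{subquiver} of $Q_n$ on a naturally identified vertex set, and that every Erdmann--Henke relation is a consequence of the relations $R_n$ (restricted to that subquiver), so that the obvious map on generators extends to an algebra map; surjectivity is then immediate because the generators of $B_n$ are hit.

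\textbf{Key steps.} First I would fix the vertex identification: the $p^n$ simple $B_n$-modules are indexed by $p$-adic-type data that matches the tuples $(a_0,\dots,a_n)\in\{0,\dots,p-1\}^{n+1}$ in the connected component of $(0,\dots,0)$ defining $Q_n$; this is essentially the combinatorics underlying the $SL_2$-to-$GL_2$ passage and should be citable from \cite{EH} together with the block combinatorics of $S(2,r)$. Second, I would set $\phi$ on idempotents by this bijection, and on arrows by sending the Koshita/$A_n$ arrows $f_{i,\pm1}^a$ that survive in the Schur block to the corresponding Erdmann--Henke arrows and all remaining arrows to $0$. Third, the verification that $\phi$ respects relations: one checks relations (1)--(5) of $A_n$ map into the Erdmann--Henke relation ideal. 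Relations (2) and (3) are zero-relations that are also present (or trivially implied) on the Schur side; the commutativity relations (1) and the "mixed" relations (4), (5) must be matched against the Schur-block relations — here one uses that squares commute and the braid-type relations of the Erdmann--Henke presentation. Fourth, surjectivity: the images of the surviving arrows together with the idempotents generate $B_n$ because they generate the arrow space of its quiver, so $\phi$ is onto.

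\textbf{Main obstacle.} The delicate part is the bookkeeping in step three: one must show that the relations $R_n$, which include \emph{more} arrows than the Schur block quiver carries, do not force any unwanted relations in $B_n$ after the extra arrows are killed — i.e. that $\phi$ is well defined — and conversely that every Erdmann--Henke relation already lies in $\phi(I)$. This is where the precise form of the Erdmann--Henke presentation (quadratic relations, zero relations at the ``ends'', and the commuting-square relations) has to be lined up term-by-term with $R_n$. I expect the cleanest route is inductive, using the inductive description of $(Q_n,R_n)$ in Remark~\ref{inductiveconstruction}: assuming the statement for $A_{n-1}\twoheadrightarrow B_{n-1}$ (and for the base case $A_1=c_p$, where $B_1$ is the basic algebra of the principal block of $S(2,p^2-1)$, a classical computation), one builds $\phi$ on the $p\times p$ grid of copies of $Q_{n-2}$, with the horizontal $g_{n-1}$-arrows and vertical $h_n$-arrows matched to the corresponding arrows in the Erdmann--Henke presentation of $B_n$, and checks the five bullet-pointed relation families of $R_n$ collapse onto the defining relations of $B_n$. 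The only genuinely new thing to verify at each inductive stage is the compatibility of the $h_n$/$g_{n-1}$ relations with the Schur-side relations connecting adjacent weights, which is a bounded, explicit check.
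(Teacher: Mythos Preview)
Your approach is genuinely different from the paper's, and as written it has a gap. You presuppose an explicit quiver-with-relations presentation of $B_n$ due to Erdmann--Henke, against which the relations $R_n$ can be matched term by term. No such presentation is available in \cite{EH} or \cite{He2}: those papers supply filtrations of projective and tilting modules, not a generating set of relations for the basic algebra of a general block $B_n$. Producing such a presentation is precisely the content of Proposition~\ref{AB}, and Lemma~\ref{AontoB} is meant as an \emph{input} to that proposition. So step three of your plan --- verifying that the $A_n$-relations hold in $B_n$ by lining them up against a known list of $B_n$-relations --- is circular. (You also blur surjectivity with injectivity when you write ``every Erdmann--Henke relation already lies in $\phi(I)$''; that direction is irrelevant to the present lemma.)

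The paper sidesteps this entirely by not constructing $\phi$ combinatorially. It invokes a representation-theoretic result of DeVisscher \cite{De} giving an epimorphism
$\widetilde{FSL(2,p^{n+2})}\twoheadrightarrow \widetilde{S(2,p^{n+2}-1)}\oplus\widetilde{S(2,p^{n+2}-2)}$,
defined via surjections of projective indecomposables. Restricting to the relevant Schur block, killing the idempotent $e$ (which on the $SL_2$ side amounts to taking the connected component of $(0,\dots,0)$ in Koshita's quiver), and then passing on both sides to the further quotient by $f$ (vertices with nonzero last coordinate) yields $D_n\twoheadrightarrow B_n$ directly. No knowledge of the quiver or relations of $B_n$ is required; the only inputs are DeVisscher's map and the quasi-hereditary quotient $B_{n+1}\twoheadrightarrow B_n$. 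The relation-matching you outline is deferred to Proposition~\ref{AB}, and is carried out there not against a presumed presentation of $B_n$ but by comparing module filtrations of projectives on both sides.
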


\proof
We identify an $n$-tuple $a=(a_0, a_1, \dots, a_{n}) \in \{0,1, \dots,p-1\}^n$ indexing 
irreducible modules for $SL(2,p^n)$ as in Koshita's notation \cite{Kop} with the natural number 
$ \lambda_a:=\sum_{0 \leq n-1}a_ip^i$ indexing a simple module for $S(2,r)$.
We first consider the first two algebras. Theorem 3.2 of an article of M. DeVisscher \cite{De} states that there is an epimorphism
$$\pi: \widetilde{FSL(2,p^{n+2})} \twoheadrightarrow \widetilde{S(2,p^{n+2}-1)} \oplus \widetilde{S(2,p^{n+2}-2)}$$ where for an algebra $A$, we denote its basic algebra by $\tilde A$.
This epimorphism is defined via epimorphisms of projective indecomposable modules. 
Hence a set of primitive orthogonal idempotents $\{e_{i}\} \subset
\widetilde{FSL(2,p^{n+2})}$ maps to a set of primitive orthogonal
idempotents $\{\tilde e_i\} \subset \widetilde{S(2,p^{n+2}-1)} \oplus
\widetilde{S(2,p^{n+2}-2)}.$ Only considering the quotient $B_{n+1}$ of the
Schur algebras, we get a surjection $$\tilde \pi: \Lambda_{n+1}:=
\widetilde{FSL(2,p^{n+2})} / \widetilde{FSL(2,p^{n+2})} e \widetilde{FSL(2,p^{n+2})} \twoheadrightarrow
B_{n+1}$$ where $e$ is the sum over all idempotents $e_i$ such that
$\tilde e_i \notin B_{n+1}$. 
In Koshita's presentation of $\widetilde{FSL(2,p^{n+2})}$ in terms of quiver and relations, the quotient  $\widetilde{FSL(2,p^{n+2})}/ \widetilde{FSL(2,p^{n+2})} e \widetilde{FSL(2,p^{n+2})}$ is obtained by taking the connected component of $(0, \dots,0)$.
Now we have a surjection of $B_{n+1}$ onto
$B_{n}$ given by factoring out $B_{n+1} \tilde f B_{n+1}$ where $\tilde f=
\sum_{\tilde e_i \in B_n, i \geq p^{n+1}} \tilde e_i =\pi (f)$.  Since the
image of $\Lambda_{n+1} f \Lambda_{n+1}$ (where $f= \sum_{e_i \in \Lambda_{n+1}, i \geq
  p^{n+1}} e_i$) under $\pi$ is contained in $B_{n+1} \tilde f B_{n+1}$ we
have an induced morphism between cokernels 
$$\phi: \Lambda_{n+1} / \Lambda_{n+1} f \Lambda_{n+1}  \twoheadrightarrow B_{n+1} / B_{n+1} \tilde f B_{n+1} \cong B_{n}.$$
Now, note that since $(p^{n+2} \geq) i \geq p^{n+1}$, 
the coefficient of $p^{n+1}$ in the $p$-adic expansion of $i$ is nonzero, 
and the idempotents $e$ and $f$ in this theorem are defined exactly as in the definition of $D_n$. 
Therefore $ \Lambda_{n+1} / \Lambda_{n+1} f \Lambda_{n+1} \cong D_n$ and $\phi$ is our desired epimorphism.
\endproof

We claim that $\phi$ is an isomorphism.
By Lemma \ref{hatnohat} this is equivalent to showing that $A_n \cong B_n$.

\begin{prop}\label{AB} $A_n \cong B_n$.
\end{prop}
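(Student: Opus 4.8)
The plan is to prove $A_n \cong B_n$ by showing both algebras have the same dimension, since we already have a surjection $\phi: D_n \twoheadrightarrow B_n$ and an isomorphism $A_n \cong D_n$ from Lemma \ref{hatnohat}; thus it suffices to show $\dim_F A_n \leq \dim_F B_n$, or equivalently that the surjection $A_n \cong D_n \twoheadrightarrow B_n$ is an isomorphism by a dimension count. The dimension of $B_n$ is known: it is the basic algebra of a block of $S(2,r)$ with $p^n$ simple modules, and the Cartan matrix of such blocks (equivalently, the decomposition numbers for $S(2,r)$) is completely understood by classical work, so $\dim_F B_n = \sum_{\lambda,\mu} c_{\lambda\mu}$ where $c_{\lambda\mu}$ are the Cartan invariants, which one can compute explicitly. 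Alternatively, and more in the spirit of this paper, I would use the result of Erdmann--Henke on the structure of these Schur algebra blocks.

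The more structural route, which I expect the authors favour, is to establish $A_n \cong B_n$ inductively using the inductive presentation of $Q_n$ and $R_n$ given in Remark \ref{inductiveconstruction}. The base case $A_1 \cong c_p$ is the statement that $c_p$ is (Morita equivalent to) the principal block of $S(2,p^2-1)$, which is classical (the block with $p$ simple modules, a "linear $A_p$-type" algebra). For the inductive step, one identifies $B_n$ itself with a matrix-and-trivial-extension construction built from $B_{n-1}$ (or from $B_{n-2}$ and $c_p$), matching the description of $Q_n$ as a $p \times p$ array of copies of $Q_{n-2}$ with the horizontal rows forming $Q_{n-1}$ and the vertical columns forming $A_{n-2} \otimes c_p$. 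Concretely, this means showing the block $B_n$ decomposes, under the relevant idempotent truncations, into these pieces and that the connecting bimodules (the $h_n$ arrows) match the bimodule $\cX_p$ or the $T$-strips in the $\cC_p$ construction. This is where the work of Erdmann--Henke on the $\omega$-function / the structure of $S(2,r)$-blocks enters: it provides the precise description of projective covers, $\Delta$-filtrations, and the way a block with $p^n$ simples is assembled from smaller blocks.

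The main obstacle will be verifying that the relations match exactly --- that is, that the five families of quadratic relations (1)--(5) defining $A_n$ are precisely the relations holding in $B_n$, with no fewer and no more. Getting a surjection $A_n \twoheadrightarrow B_n$ is comparatively easy (one checks the relations (1)--(5) hold among the corresponding elements of $B_n$, or invokes $\phi$); the reverse inequality $\dim A_n \leq \dim B_n$ is the crux. For this I would compute $\dim_F A_n$ directly from the quiver and relations: using the inductive structure, show that paths in $Q_n$ modulo $R_n$ have a normal form, so that $\dim_F A_n$ satisfies a recursion $\dim A_n = p^2 \dim A_{n-2} + (\text{correction from } h_n, g_{n-1} \text{ arrows and the extension})$, and check this recursion is solved by the Cartan-number sum for $B_n$. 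Then compare with $\dim_F B_n$ computed from the known Cartan matrix of the Schur algebra block (e.g. via Erdmann--Henke, or via the fact that $B_n \cong C_n$ would give $\dim C_n$ from the $\cC_p$ construction --- though to avoid circularity within the proof of Theorem \ref{allfour} one should use the Schur-algebra side directly). Once the dimensions agree, the surjection $A_n \cong D_n \twoheadrightarrow B_n$ of Lemma \ref{AontoB} is forced to be an isomorphism, completing the proof; this simultaneously upgrades $\phi$ to an isomorphism as claimed.
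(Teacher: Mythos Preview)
Your high-level framing is right --- one has the surjection $A_n \cong D_n \twoheadrightarrow B_n$, and the task is to show it is an isomorphism, which the paper also does by induction on $n$ with Erdmann--Henke as crucial input. But neither of your two proposed routes is what the paper actually does, and your proposal as written is a strategy sketch rather than a proof: the hard step (either a normal-form theorem for paths in $A_n$, or an intrinsic identification of $B_n$ with a $\cC_p$-style construction) is asserted, not carried out.

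The paper's argument is different in kind. Rather than a global dimension count or a structural identification of $B_n$, it compares the projective indecomposables $P_{A_n}(a)$ and $P_{B_n}(a)$ one vertex at a time. Each has a three-layer filtration $P \,/\, K^+ \oplus K^- \,/\, L$ (from relations (2)--(5) on the $A_n$ side; from \cite{MT} and Erdmann--Henke on the $B_n$ side). The top layers are isomorphic by the inductive hypothesis $A_{n-1}\cong B_{n-1}$. The heart of the proof is a \emph{squeeze}: one shows by explicit path manipulations in $Q_n$ (chiefly the commutation identity $f_{k,\eps}\,\tilde p = \tilde p^{\rho}\, f_{k,\eps}$ and the vanishing of $f_{i,\eps}q$ for $k+1 \le i < n$) that $K^\pm$ is a quotient of $P_{A_k^{c\pm1,0,\ldots,0}}(a_0,\ldots,p-2-a_k)$, while the surjection to $B_n$ and Erdmann--Henke's identification $T_{B_{n-1}^{c\pm1}}(\tilde a) \cong P_{B_k^{c\pm1,0,\ldots,0}}(a_0,\ldots,a_k)$ already force $K^\pm$ to surject onto that same module. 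So $K^\pm$ is sandwiched and hence determined. The bottom layer $L$ is handled by a similar but more elaborate iterated squeeze, peeling off factors in $A_{n-2}^{c,p-2},\, A_{n-3}^{c,p-1,p-2},\ldots$ until reaching a self-dual projective.

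The advantage of the paper's approach is that it never requires an independent dimension formula for $A_n$ or a complete normal form: the surjection to $B_n$ supplies lower bounds for free, and only targeted upper bounds (via specific path identities) are needed. Your normal-form route would in effect have to reprove all of this and more, since one would need to know \emph{every} path is zero or equal to a unique representative, not just enough of them to match the Erdmann--Henke filtration pieces.
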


\proof
We first note several things that are obvious from the relations for $A_n$.
\begin{itemize}
\item 
\emph{There exists a basis of $A_n$
such that every path in $Q_n$ is either equivalent to a element of this basis or zero in $A_n$.}
This is the case because relations in $A_n$ are quadratic 
and either take the form $a_1a_2$, or $a_1a_2 - a_1'a_2'$,
for some arrows $a_1,a_2,a_1',a_2'$. 
\item 
\emph{Any path of length $r$ in the quiver $Q_n$ defines defines an element of the path algebra which is equal 
to an element of the form $f_{i_1 \epsilon_1} \dots f_{i_r \epsilon_r} a$, where $a$ is the source vertex of
the path.} We will represent paths in this form whenever it is convenient to do so. 
\item \emph{If we fix $c_1, \dots c_r \in [0,p-1]$, and take the subalgebra generated by the vertices $(a_0, \dots, a_{n-r},c_r, \dots, c_1)$
and all arrows $f_{i,\eps}$ for $i \leq n-r$ between them,
then there is a split embedding of $A_{n-r}$ into $A_n$ which takes 
$(a_0, \dots, a_{n-r})$ to $(a_0, \dots, a_{n-r},c_r, \dots c_1)$.}
We denote the image of this embedding by $A_{n-r}^{c_1, \cdots, c_r}$. 
The existence of a splitting follows because the relations are graded with respect to an $n$-coloring: 
i.e there is a $\ZZ^n_+$-grading on $A_n$ which descends from a $\ZZ^n_+$-grading on the path algebra, 
giving $f_{i,\eps}$ degree $(0, \dots ,0, 1, 0, \dots, 0)$ where $1$ is in the $i$th position.
We will use both notations $P_{A_{n-r}^{c_1, \cdots, c_r}}(a_0, \dots, a_{n-r},c_r,\dots c_1)$ 
and  $P_{A_{n-r}^{c_1, \cdots, c_r}}(a_0, \dots, a_{n-r})$ 
for the projective indecomposable $A_{n-r}^{c_1, \cdots, c_r}$-module whose simple top is indexed by the vertex 
$(a_0, \dots, a_{n-r},c_r, \dots c_1)$.
We denote the subquiver of $Q_n$ which generates $A_{n-r}^{c_1, \cdots, c_r}$ by $Q_{A_{n-r}^{c_1, \cdots, c_r}}$.
\item 
\emph{The projective indecomposable $A_n$-module corresponding to vertex $a$ 
has simple composition factors which correspond to vertices whose last coordinate is 
either $a_{n}, a_{n} +1$ or $a_{n}-1$.} 
Indeed, any path in $Q_n$ from $Q_{A_{n-1}^{a_n}}$ to $Q_{A_{n-1}^{a_n \pm j}}$, for $j \geq 2$, 
contains at least $j$ factors $f_{n,\pm}$, 
which by relations (1),(4) and (5) can be reduced to a path 
containing an expression $f_{n,\pm}^2$, which is zero by relation (2).
\item \emph{Let $c=a_{n}$. Then $P_{A_n}(a_0, \dots, a_{n-1},c)$ is of the form 
\begin{equation}\label{A-proj}
\begin{array}{c}
 P\\ \;\;K^+ \; K^-\\  L 
\end{array}
\qquad
\begin{array}{c}
 P\\ \;\; K^+\\  L
\end{array}
\qquad
\begin{array}{c}
 P\\ \;\; K^-
\end{array} \end{equation}
for $1 \leq c \leq p-2$, $c=0$ and $c=p-1$ respectively.}
Here the picture $\begin{array}{c}
 P\\ \;\;K^+ \; K^-\\  L 
\end{array}$ denotes a module filtration with top quotient $P$, 
with submodule $L$ and with middle subquotient $K^+ \oplus K^-$. 
The other pictures should be interpreted similarly. 
The first two filtrations follow from relation (4) and the last one from relation (5), each applied in case $i=n$.
The top quotient $P$ has composition factors belonging to $A_{n-1}^{c} \ml$, 
and is isomorphic to the projective $P_{A_{n-1}^{c}}(a)$). 
The modules $K^\pm$ have composition factors belonging to $A_{n-1}^{c \pm 1} \ml$, 
whilst $L$ again has composition factors belonging to $A_{n-1}^{c} \ml$.
\end{itemize}

The projective indecomposable $B_n$-module $P_{B_n}(a)$  
is a quotient of the projective indecomposable $P_{A_n}(a)$, 
by Lemmas \ref{hatnohat} and \ref{AontoB}.
We now recall some  properties of $B_n$, from a previous article of ours \cite{MT},
and from papers of Erdmann and Henke \cite{He2}, \cite{EH}. 
\begin{itemize}
\item \emph{There is a quasi-hereditary quotient of $B_n$, 
$$B_{n-1}^{j}= \epsilon_j B_n \epsilon_j / \epsilon_j B_n \epsilon_{j+1} B_n\epsilon_j \cong B_{n-1},$$ 
where $\epsilon_j = \sum_{i> (j-1)p^{n}}\tilde e_i$, 
and the isomorphism to $B_{n-1}$ takes an idempotent $e_i$ to an idempotent $e_{(j-1)p^{n}+i}$ }
A proof of this can be found in previous papers (\cite{He2}, Theorem 5.1, \cite{MT}, Proposition 29).  
By iteration, 
we obtain quotients $B_{n-r}^{a_{n},a_{n-2}, \dots, a_{n-r+1} } \cong B_{n-r}$ of $B_n$. 
We remark that we do not yet know whether these quotients are split. 
As in the case of $A_n$, we use both notations $P_{B_{n-r}^{a_{n}, \cdots, a_{n-r+1}}}(a_0, \dots, a_{n})$ 
and  $P_{B_{n-r}^{a_{n}, \cdots, a_{n-r+1}}}(a_0, \dots, a_{n-r})$ for projective indecomposable 
$B_{n-r}^{a_{n}, \cdots, a_{n-r+1}}$-modules.
\item \emph{The projective module $P_{B_n}(a)$ has a filtration \cite{MT}
\begin{equation}\label{B-proj}
\begin{array}{c}
 P_{B_{n-1}^{c}}(a)\\ T_{B_{n-1}^{c+1}}(\tilde a)\;\; \; T_{B_{n-1}^{c-1}}(\tilde a)\\ P_{B_{n-1}^{c}}(a)^*
\end{array} \qquad \begin{array}{l}
P_{B_{n-1}^{c}}(a)\\ T_{B_{n-1}^{c+1}}(\tilde a)\\ P_{B_{n-1}^{c}}(a)^*
\end{array} \qquad  \begin{array}{l}
 P_{B_{n-1}^{c}}(a)\\  T_{B_{n-1}^{c-1}}(\tilde a)
\end{array} 
\end{equation}
for $1 \leq c \leq p-2$,  $c=0$ and $c=p-1$ respectively.}
Here $\tilde a= (p-1-a_i)_{(i=0, \dots, n-1)}$. Indeed, 
the middle composition factors in these filtrations are given as the trace of all modules  
$P_{B_n}(a_0, \dots, a_{n-1}, c\pm 1)$ in $P_{B_n}(a)$ (\cite{MT}, Proposition 29),
and isomorphic to the indecomposable tilting module 
$T_{B_{n-1}^{c \pm 1}}(\tilde a)$ indexed by vertex
$\tilde a= (p-1-a_i)_{(i=0, \dots, n-1)}$ (\cite{EH}, Proposition 25).
\item \emph{$T_{B_{n-1}^{c \pm 1}}(\tilde a) \cong P_{B_{k}^{c \pm 1,0, \dots,0}}(a_0, \dots, a_k)$, 
if $k$ is the last non-zero entry in $\tilde a$.} 
See\cite{EH}, Proposition 25. 
Another way of saying this is that, in the filtration (\ref{B-proj}) ,  
$P_{B_n}(a)$ has subquotients $ P_{B_{n-1}^{c}}(a)$, $P_{B_{k}^{c \pm 1,0, \dots,0}}(a_0, \dots, a_k)$ 
and (if $c \neq p-1$) $ P_{B_{n-1}^{c}}(a)*$, 
where $k$ is the largest number less than $n$ such that $a_k < p-1$. 
\end{itemize}

We now proceed by induction on $n$ to show that $A_n$ and $B_n$ are isomorphic. This is trivial for $n=0$, so assume that $A_{n-1}\cong B_{n-1}$.

Then, for fixed $a$, $P_{B_{n-1}^{c}}(a)\cong P_{A_{n-1}^{c}}(a)$, 
so comparing filtrations (\ref{A-proj}) and (\ref{B-proj})
we conclude that $K^\pm$ surjects onto $T_{B_{n-1}^{c \pm 1}}(\tilde a)$, 
whilst $L$ surjects onto $P_{B_{n-1}^{c}}(a)^*$. 
It remains to show that these surjections are isomorphisms. 
Equivalently, it remains to show that $K^\pm \cong P_{A_{k}^{c \pm 1,0, \dots,0}}(a_0, \dots, p-2-a_k)$, 
and $L \cong P_{A_{n-1}^{c}}(a)^*$, 
since by the inductive hypothesis $A_{k}^{c \pm 1,0, \dots,0} \cong B_{k}^{c \pm 1,0, \dots,0}$.
and $A_{n-1}^{c} \cong B_{n-1}^{c}$.

We will only treat the case $0 < c < p-1$. The  cases $c=0$ and $c=p-1$ are similar, but easier. 

More facts about $A_n$.

\begin{itemize}
\item \emph{For a path $\tilde p= f_{i_1,\eps_1}\cdots f_{i_r,\eps_r}a$ in $Q_n$ with $i_j<k$ for $j=1,\dots,r$, \begin{equation}\label{largefirst}f_{k,\eps}f_{i_1,\eps_1}\cdots f_{i_r,\eps_r}a = f_{i_1,\rho(\eps_1)}\cdots f_{i_r,\rho(\eps_r)}f_{k,\eps}a\end{equation} in $A_n$, where $\rho(\eps_j)=\eps_j$ if $i_j<k-1$ and
$\rho(\eps_j)=-\eps_j$ if $i_j=k-1$.} 
This is an immediate consequence of relations (1) and (5). 
We denote $f_{i_1,\rho(\eps_1)}\cdots f_{i_r,\rho(\eps_r)}$ by $(\tilde p)^\rho$

\item \emph{There is a unique shortest path $q$ in $Q_n$ from a vertex $a$ 
to the subquiver $Q_{A_{n-1}^{c \pm 1}}$, 
and any other such path can be rewritten as a concatenation of this path with a path inside $Q_{A_{n-1}^{c \pm 1}}$.} 
Indeed, if $a_{n-1} \neq p-1$, then $f_{n, \pm 1}a$ is a path with target
$a'=(a_0, \dots,p-2- a_{n-1}, c \pm1)$ in $Q_{A_{n-1}^{c \pm 1}}$, 
of length $1$, and this is obviously the shortest possible path.
If $a=(a_0,\dots, a_k,p-1,\dots, p-1,c)$, 
then we claim the shortest possible path to $Q_{A_{n-1}^{c \pm 1}}$ is 
$$q:=f_{n,\pm1}f_{n-1,-1}f_{n-2,-1}\cdots f_{k+1,-1}a,$$ 
and the target of this path is $a'=(a_0, \dots,p-2- a_{k},0, \dots,0, c \pm1)$. 
It can be easily checked that $q$ is a path from $a$ to $a'$. 
To see that this is the shortest path, consider the arrows $f_{i, \eps}^b$ in the quiver. 
Recall there is no arrow $f_{i,\eps}^b$ if $b_{i-1}=p-1$. 
Therefore, walking along any path from $a$ to a vertex with last coordinate different from $c$ 
will successively alter coordinates $a_{k+1}, \dots, a_{n-1}$ taking values $p-1$, 
beginnining with the coordinate $a_{k+1}$. 
So when we represent our path, all the elements 
$f_{n,\pm1},f_{n-1,-1},f_{n-2,-1},\dots, f_{k+1,-1}$ will appear, in this order;
in between two of these, say $f_{n-r,-1}$ and $f_{n-r-1,-1}$, 
we will see elements $f_{j,\eps}$ for $j \leq n-r-1$. 
But by the previous bullet point, we can rewrite this path as $p'q$, for some $p'$.
This shows that the path $q$ is the shortest possible path from $a$ 
to $Q_{A_{n-1}^{c \pm 1}}$, and any other such path is 
equal in $A_n$ to a concatenation of $q$ with a path $p'$ inside $Q_{A_{n-1}^{c \pm 1}}$.
\end{itemize}

%

We now proceed to show that $K^\pm \cong P_{A_{k}^{c \pm 1,0, \dots,0}}(a_0, \dots, a_k)$.

\begin{itemize}
\item \emph{$K^\pm$ has a simple top corresponding to vertex $a'$ and is hence a quotient of $P_{A_{n-1}^{c \pm 1}}(a')$. }
This is a module-theoretic version of the statement in the previous bullet point 
that any path from $a$ to $Q_{A_{n-1}^{c \pm 1}}$,
is equivalent to a concatenation of the path $q$ with a path inside $Q_{A_{n-1}^{c \pm 1}}$.

%
\item \emph{$K^\pm \cong P_{A_{k}^{c \pm 1,0, \dots,0}}(a_0, \dots, a_k)$.} 
To see this, we establish that $K^\pm$ is a quotient of $P_{A_{k}^{c \pm 1,0, \dots,0}}(a_0, \dots, a_k)$. 
This is sufficient since we have already established a surjection 
$K^\pm \twoheadrightarrow P_{A_{k}^{c \pm 1,0, \dots,0}}(a_0, \dots, a_k)$.

To see that $K^\pm$ is a quotient of $P_{A_{k}^{c \pm 1,0, \dots,0}}(a_0, \dots, a_k)$, 
we show that $f_{i,\eps}q = 0$, for $k+1 \leq i <n$.
Indeed, apply $f_{i,\eps}$ to $q$, for $k+1 \leq i <n$. 
As the target of $q$ has $0$ in the $i^{th}$ coordinate, the choice $\eps =-1$ will yield zero. 
On the other hand, the choice $\eps=1$ gives
\begin{equation}\begin{split}\label{blub}
f_{i,1}&f_{n,\pm1}f_{n-1,-1}f_{n-2,-1}\cdots f_{k+1,-1}a\\&\overset{(1)}{=}f_{n,\pm1}f_{n-1,-1}f_{n-2,-1}\cdots f_{i+2,-1}f_{i,1}f_{i+1,-1}f_{i,-1}\cdots f_{k+1,-1}a\\
&\overset{(5)}{=}f_{n,\pm1}f_{n-1,-1}f_{n-2,-1}\cdots f_{i+2,-1}f_{i+1,-1}f_{i,-1}f_{i,-1}\cdots f_{k+1,-1}a
\end{split}\end{equation}
but this contains a term $f_{i,-1}^2$ which is zero by relation (2), hence $f_{i,1}q$ also vanishes. (The numbers on the equality indicate the relations used.)
Therefore $f_{i,\eps}q$ is only non-zero for $i \leq k$, 
which means that $K^\pm$ factors over $A_{k}^{c \pm 1,0, \dots,0}$, 
and is therefore a quotient of $P_{A_{k}^{c \pm 1,0, \dots,0}}(a_0, \dots, a_k)$, as desired.
\end{itemize}
As $K^\pm \cong P_{A_{k}^{c \pm 1,0, \dots,0}}(a_0, \dots, a_k)$, 
we now know that the quotient of $P_{A_n}(a)$ by $L$ is isomorphic to the quotient of 
$P_{B_n}(a)$ by $P_{B_{n-1}^{c}}(a)^*$.

It remains to show that $L \cong P_{A_{n-1}^{c}}(a)^*$. Let us look at the composition factors.
\begin{itemize}
\item \emph{$L$ has a simple head corresponding to vertex $(a_0, \dots,p-2- a_{k},0, \dots,0, p-2, c)$, 
and is therefore a quotient of $P_{A_{n-1}^{c}}(a_0, \dots,p-2- a_{k},0, \dots,0, p-2, c)$.}
Indeed, the shortest path from $a$ to $Q_{A_{n-1}^{c \pm 1}}$ and back to $Q_{A_{n-1}^{c}}$ is  
$$q' = f_{n,\mp 1}f_{n,\pm 1}f_{n-1,-1}f_{n-2,-1}\cdots f_{k+1,-1}a,$$ 
since it is obtained by multiplying the shortest path $q$ from $a$ to $Q_{A_{n-1}^{c \pm 1}}$ by $f_{n,\mp 1}$, 
only increasing the length by one. 
The target of $q'$ is $(a_0, \dots,p-2- a_{k},0, \dots,0, p-2, c)$, 
and by the same argument as for $K^\pm$, $L$ has a simple head as claimed.
\item \emph{If $a_{n-1}\neq p-1$, then $L \cong P_{A_{n-1}^{c}}(a)$.}
Indeed, the path $q'$ given in the previous bullet point is just $f_{n,\mp 1}f_{n,\pm 1} a$, with target $a$, 
hence $L$ is a quotient of $P_{A_{n-1}^{c}}(a)$. 
But $P_{A_{n-1}^{c}}(a)$ is self-dual, as by the inductive hypothesis it is isomorphic to 
$P_{B_{n-1}^{c}}(a_0, \dots, a_{n-1})$, which (in case $a_{n-1} \neq p-1$) is self-dual, 
as is visible the filtrations for $B_{n-1}$ given by Equation \ref{B-proj}.  
We already know that $L$ surjects onto $P_{A_{n-1}^{c}}(a)^* \cong P_{A_{n-1}^{c}}(a)$, and so the claim follows.
\end{itemize}
From now on, we  assume $a_{n-1}=p-1$.
\begin{itemize}
\item \emph{We have a filtration of $P_{A_{n-1}^{c}}(a)^*$ with factors 
$$\begin{array}{c} P_{A_{k}^{(c,p-2, 0,\dots,0)}}(a_0,\dots, a_{k-1},p-2-a_k)
\\P_{A_{n-2}^{c,p-1}}(a)^*\end{array}.$$}
Indeed, by the inductive hypothesis $A_{n-1}\cong B_{n-1}$, 
and so $P_{A_{n-1}^{c}}(a)^*$ has a filtration with factors
$\begin{array}{c} T_{A_{n-2}^{c,p-2}}(\tilde a)
\\P_{A_{n-2}^{c,p-1}}(a)^*\end{array}$ 
analogous to the filtration given in Equation (\ref{B-proj}). 
But again $ T_{A_{n-2}^{c,p-2}}(\tilde a) \cong P_{A_{k}^{c,p-2, 0,\dots,0)}}(a_0,\dots, a_{k-1},p-2-a_k)$  
(\cite{EH}, Proposition 25), and by the inductive assumption that $A_{n-1}\cong B_{n-1}$. 
We therefore have a filtration as claimed. 
\item \emph{The maximal quotient of $L$ with composition factors in $A_{n-2}^{c,p-2}$ is 
isomorphic to $P_{A_{k}^{c,p-2, 0, \dots,0}}(a_0, \dots, p-2-a_k)$.}
A proof of this runs similarly to the proof that 
$K^\pm \cong P_{A_{k}^{c \pm 1,0, \dots,0}}(a_0, \dots, a_k)$, 
by noting that $f_{i,\eps}$ applied to $(a_0, \dots,p-2- a_{k},0, \dots,0, p-2, c)$ 
yields zero, for $k+1 \leq i \leq n-2$.
Indeed, for $k+1 \leq i \leq n-2$,
\begin{equation*}\begin{split}f_{i,1}f_{n,\mp 1}f_{n,\pm 1}f_{n-1,-1}f_{n-2,-1}\cdots f_{k+1,-1}a &= f_{i,\eps} f_{n,\mp 1}q\\&=f_{n,\mp 1}f_{i,\eps}q =0
\end{split}\end{equation*}
by Equation (\ref{blub}) and $f_{i,-1}(a_0, \dots,p-2- a_{k},0, \dots,0, p-2, c)=0$, 
because $a_i=0$.
\item \emph{Those composition factors of $L$ which are not in $A_{n-2}^{c,p-2} \ml$
are all in $A_{n-2}^{c,p-1} \ml$, and the smallest submodule $L'$ containing all these factors is a quotient of 
$P_{A_{n-2}^{c,p-1}}(a_0, \dots, p-2-a_k, 0,\dots,0,p-2,p-1,c)$. }
This follows since $L$ has a simple head 
$(a_0, \dots, p-2-a_k, 0,\dots,0,p-2,p-1,c)$, which is the target of $f_{n-1,1} f_{n,\mp 1}q$. 
Indeed, all composition factors of $L$ are in $A_{n-1}^{c} \ml$. 
We therefore have to consider those composition factors which are in $A_{n-2}^{c,j} \ml$, for some $j \neq p-2$. 
We obtain paths in $A_n$ corresponding to such factors 
by applying $f_{n-1,\pm1}$ to $\tilde p f_{n,\mp 1}q$, 
for any path $\tilde p$ in $Q_{A_{n-2}^{c,p-2}}$. 
But such a path is equal to $(\tilde p)^\rho f_{n-1,\pm1}f_{n,\mp 1}q$ in $A_n$, 
so we can equivalently apply $f_{n-1,\pm1}$ directly to $f_{n,\mp 1}q$. 
Exploiting the commutation relations for $A_n$,  and the fact that $f_{n-1,\pm1}^2=0$, 
we obtain the claim.
\item \emph{The smallest submodule of $L'$ containing all composition factors outside $A_{n-3}^{c,p-1,p-2} \ml$ 
only has composition factors in $A_{n-3}^{c,p-1,p-1}$, and is a quotient of the projective   
$P_{A_{n-2}^{c,p-1,p-1}}(a_0, \dots, p-2-a_k, 0,\dots,0,p-2,p-1,p-1,c)$.}
This follows since, by repeating the argument showing that the quotient of $L$ 
with composition factors in $A_{n-2}^{c,p-2} \ml$ is $P_{A_{k}^{c,p-2, 0, \dots,0}}(a_0, \dots, p-2-a_k)$, 
we see that the quotient of $L'$ with composition factors in 
$A_{n-3}^{c,p-1,p-2}$ is isomorphic to $P_{A_{k}^{c,p-1,p-2, 0, \dots,0}}(a_0, \dots, p-2-a_k)$. 
Then, as in the previous bullet point, the claim follows.

\item \emph{$L$ has a composition series with composition factors $P_0,P_1,...,P_k, \hat L$, 
where $P_1 \in A_{n-2}^{c,p-2} \ml$, where $P_1 \in A_{n-3}^{c,p-1} \ml$, 
where $P_2 \in A_{n-4}^{c,p-1,p-1} \ml$,..., 
where $\hat L$ is a quotient of $P_{A_{k}^{c,p-1, \dots ,p-1}}(a_0, \dots, a_k)$, and where $a_k \neq p-1$,
for some $k$.}
This follows by iterating the arguments of previous bullet points.

\item \emph{$\hat L$ is isomorphic to $P_{A_{k}^{c,p-1, \dots ,p-1}}(a_0, \dots, a_k)$.}
First note that the projective module $P_{A_{k}^{c,p-1, \dots p-1,p-1}}(a_0, \dots, a_k)$ is self-dual
since $a_k \neq p-1$. 
Furthermore, since $L$ surjects onto $P_{B_n-1^c}(a)^* \cong P_{A_{n-1}^c}(a)^*$,  
by restriction, $\hat L$ surjects onto 
$P_{A_{k}^{c,p-1, \dots p-1,p-1}}(a_0, \dots, a_k)^*$.
Hence, by self-duality we have surjections in either direction between the finite-dimensional spaces
$\hat L$ and $P_{A_{k}^{c,p-1, \dots ,p-1}}(a_0, \dots, a_k)$.
These surjections are therefore isomorphisms.
\end{itemize}

Thanks to our belt of bullets, the surjection from $L$ to $P_{B_n-1^c}(a)^*$ is an isomorphism,
from which it follows that the algebras $A_n$ and $B_n$ are indeed isomorphic, as required.
 


\endproof

We have now seen that the algebras $A_n, B_n$ and $D_n$ are all isomorphic. 
As a corollary, we obtain the following result (conjectured in \cite{MT}).

\begin{lem}
$B_n \cong C_n$.
\end{lem}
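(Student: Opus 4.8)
The plan is to prove $B_n \cong C_n$ by showing that the algebra $C_n$, built by iterating the operator $\mathcal{C}_p$ starting from $(F,F)$, coincides with $A_n$ (and hence with $B_n$, by Proposition \ref{AB}). Since we have already identified $A_n \cong B_n \cong D_n$, it suffices to produce an isomorphism $C_n \cong A_n$. I would proceed by induction on $n$. The base case $n=0$ is trivial, since $C_0 = F = A_0$; the case $n=1$ amounts to checking that $\mathcal{C}_p(F)$, with the construction via the infinite matrix algebra $\mathcal{B}$, its trivial extension $\mathcal{C}(F)$, and the subquotient $\eta_1^{p+1}\mathcal{C}(F)\eta_1^{p+1}/\eta_1^{p+1}\mathcal{C}(F)1_{A_{p+1}}\mathcal{C}(F)\eta_1^{p+1}$, reproduces exactly the quasi-hereditary algebra $c_p = A_1$ described in Remark \ref{inductiveconstruction}. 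This is a direct computation: when $A = F$ and $T = F$, the matrix algebra $\mathcal{B}$ is the path algebra of a line quiver $A_\infty^\infty$, its trivial extension adds the dual arrows, and the prescribed subquotient truncates to the $p$-vertex zig-zag quiver with precisely the relations $(\xi_{i+1}\xi_i, \eta_i\eta_{i+1}, \xi_i\eta_i - \eta_{i+1}\xi_{i+1}, \xi_{p-1}\eta_{p-1})$.

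For the inductive step, I would match the recursive description of $\mathcal{C}_p^n(F,F) = (C_n, \mathcal{X}_n)$ against the inductive construction of $A_n$ given in Remark \ref{inductiveconstruction}. The key observation is that $\mathcal{C}_p$ applied to a pair $(A, T)$ produces an algebra whose underlying quiver is built from $p \times p$ copies of the quiver of $A$ arranged in a grid, with horizontal arrows encoding the bimodule $T$ and vertical arrows coming from the trivial extension; this is exactly the grid pattern of $p$ rows and $p$ columns of copies of $Q_{n-2}$ appearing in the display in Remark \ref{inductiveconstruction}. One must check that: (i) the horizontal arrows $g_{n-1}$ in the $A_n$-picture correspond to the bimodule $\mathcal{X}_{n-1}$ component in the matrix algebra $\mathcal{B}$ built from $(C_{n-1}, \mathcal{X}_{n-1})$ — equivalently that $\mathcal{X}_{n-1}$, as a $C_{n-1}$-bimodule, is the bimodule $A_{n-1}$-bimodule spanned by the arrows $g_{n-1}$; (ii) the vertical arrows $h_n$ correspond to the trivial-extension part $\mathcal{B}^{(*)}$; and (iii) the five families of relations $R_n$ listed in Remark \ref{inductiveconstruction} — the commutation of distant squares, $f_{i,\epsilon}^2 = 0$, the degeneration at $a_i = p-1$, the commutation $f_{i,1}f_{i,-1} = f_{i,-1}f_{i,1}$, and the twisted commutation relation (5) — are precisely the relations imposed by forming the trivial extension and then taking the subquotient $\eta_1^{p+1}\mathcal{C}(A)\eta_1^{p+1}/\eta_1^{p+1}\mathcal{C}(A)1_{A_{p+1}}\mathcal{C}(A)\eta_1^{p+1}$.

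Concretely, I would set $(A, T) = (C_{n-1}, \mathcal{X}_{n-1})$, which by induction equals $(A_{n-1}, M_{n-1})$ where $M_{n-1}$ is the $A_{n-1}$-bimodule generated by $f_{n-1,\pm 1}$, and then unwind $\mathcal{C}_p(A, T)$. The subalgebra $\eta_1^{p+1}\mathcal{B}\eta_1^{p+1}$ gives the $(p+1)$-fold ``staircase'' of copies of $A_{n-1}$ linked by $\mathcal{X}_{n-1}$; the trivial extension $\mathcal{C}(A)$ supplies the reverse (downward) arrows $h_n$; and the subquotient kills everything factoring through the last copy $1_{A_{p+1}}$, leaving a grid of $p$ copies in each direction — matching the $p$ rows of the $A_n$-diagram once we also account for the zig-zag relabelling of vertices (the alternation of directions in odd versus even rows, as recorded in Remark \ref{inductiveconstruction}). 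The relations arising automatically are: trivial-extension squares commute (giving ``all squares commute'' and $H_n x = x H_n$), the socle condition in the trivial extension forces $f_{n,\epsilon}^2 = 0$ and $f_{n,\epsilon}f_{n,-\epsilon} = 0$ at the extreme vertex, and the self-duality of $\mathcal{X}_{n-1}$ together with the inductive relations $R_{n-1}$ inside each row yield relations (1), (4), (5). Finally, the bimodule statement $\mathcal{X}_n = \mathcal{X}_p(C_n) = \eta_1^p \mathcal{C}(C_n)\eta_2^{p+1}$ must be identified with the $A_n$-bimodule generated by $f_{n,\pm 1}$, so the induction can continue; this follows by inspecting which matrix entries survive the idempotent truncation.

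The main obstacle I expect is the bookkeeping of relations in step (iii): showing that the subquotient construction imposes \emph{exactly} the relations $R_n$ of Remark \ref{inductiveconstruction}, no more and no less. The trivial extension and the two-sided ideal quotient each introduce relations somewhat implicitly (via the socle pairing and via the requirement that paths through $1_{A_{p+1}}$ vanish), and matching these to the explicit quadratic relations $(1)$–$(5)$ — particularly the twisted commutation relation (5), which encodes the ``twist'' $a_{i-1} \mapsto p-2-a_{i-1}$ in the quiver $Q_n$ and which must emerge from the interplay between the bimodule structure on $\mathcal{X}_{n-1}$ and the trivial extension — requires care. A clean way to organize this is to compare the two algebras via their projective indecomposable modules: one checks that $\mathcal{C}_p(A)$ has projectives with precisely the Loewy structure recorded in display (\ref{A-proj}) (which is after all how the operator $\mathcal{C}_p$ was designed in \cite{MT}), so that the natural surjection $A_n = \mathcal{C}_p^n(F) \to B_n$ — or, going the other way, the map built from the self-dual bimodule data — is an isomorphism by a dimension count analogous to the belt of bullets in the proof of Proposition \ref{AB}. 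Given that the hard analytic work has already been done there, the present lemma should reduce to verifying that the combinatorics of $\mathcal{C}_p$ reproduces the inductive recipe for $A_n$, which is essentially a matter of careful unwinding of definitions.
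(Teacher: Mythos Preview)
Your approach is genuinely different from the paper's, and considerably more laborious. The paper does \emph{not} attempt a direct inductive matching of $C_n$ with the quiver-and-relations description of $A_n$. Instead, it exploits a result already established in \cite{MT}: there is a certain filtration on $B_n$ whose associated graded ring $grB_n$ is isomorphic to $C_n$. The new ingredient supplied by the present paper is the isomorphism $A_n \cong B_n$ from Proposition~\ref{AB}, which transports the natural $\mathbb{Z}_+^n$-grading of $A_n$ (with $f_{i,\eps}$ in degree $(0,\dots,1,\dots,0)$) to $B_n$. Because this grading is compatible with the filtration from \cite{MT}, one gets $B_n \cong grB_n$ for free, and hence $B_n \cong C_n$. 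The entire argument is three lines.

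Your route --- unwinding $\mathcal{C}_p$ inductively and matching relations against $R_n$ --- is in principle viable, but you should be aware of what it costs. The relation-matching you flag as ``the main obstacle'' really is nontrivial: the trivial extension of $\mathcal{B}$ by $\mathcal{B}^{(*)}$ introduces relations implicitly via the socle pairing, and extracting precisely relations (3), (4), (5) (especially the twist $a_{i-1}\mapsto p-2-a_{i-1}$) from the bimodule structure of $\cX_{n-1}$ requires real work that you have not done. Your description also contains an imprecision: $\mathcal{C}_p(A,T)$ does not produce a $p\times p$ grid of copies of $A$, but rather $p$ copies of $A$ along a line linked by $T$ and $T^*$; the grid in Remark~\ref{inductiveconstruction} arises because each row $A_{n-1}$ already decomposes into $p$ copies of $A_{n-2}$. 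Your inductive step must also carry the bimodule identification $\cX_{n-1} \cong M_{n-1}$, which you note but do not establish. None of this is wrong in spirit, but the paper's grading argument sidesteps all of it: once one knows $grB_n \cong C_n$ from prior work and $B_n$ is graded compatibly with that filtration, there is nothing left to check.
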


\proof
As already observed, $A_n$ has a $\ZZ_+^n$-grading, 
in which $f_{i,\eps}$ lies in degree $(0, \dots, 0,1,0,\dots 0)$, 
where the $1$ is in the $i$th component.
By Proposition \ref{AB}, $B_n$ is graded in the same way.
We established previously that the graded ring $grB_n$ of $B_n$, 
taken with respect to a certain filtration, is isomorphic to $C_n$ (\cite{MT}, Corollary 33). 
The $\mathbb{Z}_+^n$-grading here is compatible with this filtration, which means to say that $B_n \cong grB_n$.
Therefore $B_n \cong C_n$.
\endproof

Looking the lemmas of this section together, 
it is visible that we have now completed the proof of Theorem \ref{allfour}.

\begin{remark} Throughout this section, we have assumed that $p \geq 3$. 
Here we state how to treat the case $p=2$, which is slightly different, 
although analogous statements to those for $p \geq 3$ hold.

We define an algebra $A_n$ as follows: 
Let $Q_n'$ be the quiver with vertices $a=(a_0, \dots a_n)$, 
where all $a_i \in \{0,1\}$ and arrows $f_{\pm 1, i}^a$  (for $1 \leq i \leq n$) 
point from $a$ to $a=(a_0, \dots, a_i \pm 1, \dots, a_n)$, 
whenever $a_i \pm 1 \in \{0,1\}$. 
Let $Q_n$ be the connected component in $Q_n'$ containing $(0, \dots, 0)$ and 
note that the coordinate $a_0$ in this component is always zero, hence we can omit it.
$A_n$ is then given as the path algebra of $Q_n$ modulo the following relations:

\begin{itemize}
\item[(i)]$f_{i,\eps_1}f_{j,\eps_2}a-f_{j,\eps_2}f_{i,\eps_1}a$ if $i \neq j-1,j,j+1$ (and $ a_i +\eps_1,a_j +\eps_2 \in \{0, \dots, p-1\} , a_{i-1}=a_{j-1}=0$) 
\item[(ii)]$f_{i,1}f_{i,-1} a$ if $a_i=1$ (and $a_{i-1}=0)$
\item[(iii)]$f_{i+1,\eps}f_{i,-1} f_{i,1}a -f_{i,-1}f_{i,1}f_{i+1,\eps} a$ (if $a_i=a_{i-1}=0$)
\item[(iv)]$f_{i,1}f_{i+1,\eps}f_{i,-1} a$ if $a_{i}=1, a_{i-1}=0$.
\end{itemize}

Relation (i) corresponds with relation (1) from the case $p \geq 3$, 
whilst relation (ii) corresponds with relation (3) from the case $p \geq 3$. 
Relations (2) and (4) from the case $p \geq 3$ are superfluous in case $p=2$, 
and relations (iii) and (iv) given here replace the previous relation (5).

We denote by $B_n$ a block of the Schur algebra $S(2,r)$ with $2^n$ simple modules, eg. $S(2,2^n-2)$.

The operator $\mathcal{C}_2$, and the algebra $C_n$ are defined in exactly the same way 
as in the case $p \geq 3$. 

We define $D_n$ as in case $p \geq 3$, 
only this time using Koshita's quiver and relations from \cite{Ko2} for $p=2$.

The same results hold as in case $p \geq 3$, and we have
$$A_n \cong B_n \cong C_n \cong D_n.$$
The proofs are entirely analogous.
\end{remark}

\ssection{2. Homotopical constructions.}\label{2functor}

Here we define a weak $2$-category $\mathcal{T}$ 
of algebras with a bimodule, and define operators $\mathbb{O}_{(c,x)}$ on $\mathcal{T}$.
An introduction to weak $2$-categories can be found in a note by Leinster \cite{Leinster}.

We define a $2$-category $\mathcal{T}$ with the following data:

\begin{itemize}
\item a collection $\Ob \mathcal{T}$ whose elements are pairs $\bA:= (A,{}_A T_A)$,  where $A$ is a finite-dimensional algebra, and ${}_A T_A$ is an $(A,A)$-bimodule (these elements are the $0$-cells);
\item for each pair $(\bA,\bB)$ a category $\mathcal{T}(\bA,\bB)$ as follows:
\begin{itemize}
\item
objects ($1$-cells of $\mathcal{T}$) are pairs $(M, \phi_M)$ where $M={}_A M_B$ is an $(A,B)$-bimodule and $\phi_M: {}_A T_A \otimes_A M \rightarrow M \otimes_B {}_B T_B $ is an$(A,B)$-bimodule isomorphism; 
\item morphisms ($2$-cells of $\mathcal{T}$) $(M, \phi_M) \rightarrow (N, \phi_N)$ are $(A,B)$-bimodule morphisms $f:M \rightarrow N$ such that the diagram 
$$\xymatrix{ {}_A T_A \otimes_A M\ar[r]^{\phi_M} \ar[d]^{1 \otimes f}& M \otimes_B {}_B T_B  \ar[d]^{f \otimes 1}\\
{}_A T_A \otimes_A N
 \ar[r]^{\phi_N}& N \otimes_B {}_B T_B
}$$
commutes.
\end{itemize}
\item functors $$\mathcal{T}(\bA,\bB) \times \mathcal{T}(\bB,\bC) 
\rightarrow  \mathcal{T}((\bA,\bC)$$ taking pairs of $1$-cells 
$((M, \phi_M),(\tilde M, \phi_{\tilde M}))$ to 
$(M \otimes \tilde M, (\phi_M \otimes 1)\circ (1 \otimes \phi_{\tilde M}))$, 
and pairs of $2$-cells $(f,g)$ to $f \otimes g$; 
furthermore a $1$-cell $I_A$ which is just the bimodule ${}_A A_A$ together with the obvious isomorphism 
$\phi_A:  A\otimes_A {}_A T_A \rightarrow {}_A T_A \otimes_A A$.
\end{itemize}
Since tensor products of bimodules are associative, up to isomorphism, 
and the associativity isomorphisms obey the relevant pentagon axiom,
$\mathcal{T}$ indeed forms a weak $2$-category.

Now let $(A, T)$ be an object of $\mathcal{T}$ and let $(c,x)$ be another object of $\mathcal{T}$, 
such that $c = \oplus_{j \geq 0} c^{(j)}$ is a $\mathbb{Z}_+$-graded algebra, and 
$x = \oplus_{j \geq 0} x^{(j)}$ is a $\mathbb{Z}_+$-graded bimodule over $c$.

\begin{defn}\label{c(A,T)}
The algebra $c(A,T)$ 
is the vector space $c(A,T) := \underset{j \geq 0}{\bigoplus}c^{(j)} \otimes_F T^j$, 
where $c^{(j)}$ is the $j$th graded component of $c$, 
and the formal expression $T^j$ denotes the $j$-fold tensor product over $A$ of $T$ with itself. 
The multiplication is given by 
$$( c_1 \otimes t_1\cdots t_j )( c_2\otimes t_{j+1}\cdots t_{j+k} )= (c_1c_2\otimes t_1\cdots t_j\cdots t_{j+k}),$$ 
for $c_1\in c^{(j)},c_2 \in c^{(k)}$ and $t_1\cdots t_j= t_1 \otimes \cdots \otimes  t_j$ for $t_i \in T$. 
\end{defn}

Note that $c(A,T)$ is clearly an associative algebra, 
namely the subalgebra of homogeneous tensors inside $c\otimes_F A[T]$, 
where $A[T]$ denotes the tensor algebra of $T$ over $A$. 

We call $c(A,T)$ a \emph{twisted tensor product}, 
since it resembles a tensor product $c \otimes A$,
twisted by the bimodule $T$.

Analogously, we define the bimodule $x(A,T):= \underset{j \geq 0}{\bigoplus} x^{(j)}\otimes_F T^j$, 
where $x^{(j)}$ is the $j$th homogeneous component of $x$. 
The left and right $c(A,T)$-actions on $x(A,T)$ are given by 
$$( c_1\otimes t_1\cdots t_j )(x_1 \otimes t_{j+1}\cdots t_{j+k} )(c_2\otimes t_{j+k+1}\cdots t_{j+k+l})=$$
$$( c_1x_1c_2 \otimes t_1\cdots t_j\cdots t_{j+k+l} ),$$  
for $c_1 \in c^{(j)},c_2 \in  c^{(l)},x_1 \in x^{(k)}$ and $t_i \in T$. This oviously defines a bimodule action.

We define $(c,x) \ast (A,T):= (c(A,T), x(A,T))$. 
The product $\ast$, defined on graded objects of $\mathcal{T}$, is not associative.
We write $\mathbb{O}_{(c,x)}$ for the operator $(c,x) \ast -$, defined on objects of $\mathcal{T}$.

We now claim that this operator is natural on a categorical level, namely we have the following lemma.

\begin{lem}
$\mathbb{O}_{(c,x)}$ lifts to an endo-$2$-functor of $\mathcal{T}$.
\end{lem}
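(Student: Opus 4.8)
The plan is to extend $\mathbb{O}_{(c,x)}$ from a map on objects (0-cells) of $\mathcal{T}$ to a full endo-$2$-functor, by specifying its action on $1$-cells and $2$-cells and then checking the axioms. First I would fix notation: for a $0$-cell $\bA=(A,{}_AT_A)$ we already have $\mathbb{O}_{(c,x)}\bA = (c(A,T),x(A,T))$ by Definition~\ref{c(A,T)}. Given a $1$-cell $(M,\phi_M)\in\mathcal{T}(\bA,\bB)$, I would send it to the $(c(A,T),c(B,S))$-bimodule
$$c(M,\phi_M) := \bigoplus_{j\geq 0} c^{(j)}\otimes_F M^{j},$$
where $M^j$ denotes the ``twisted'' tensor factor $T\otimes_A\cdots\otimes_A T\otimes_A M$ (or equivalently $M\otimes_B S\otimes_B\cdots$), the two descriptions being identified via repeated use of $\phi_M$; the key point is that $\phi_M$ is an \emph{isomorphism}, so there is a canonical way to slide all the $T$'s past $M$ and turn them into $S$'s. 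The left $c(A,T)$-action and right $c(B,S)$-action are the obvious concatenation actions, exactly as for $x(A,T)$ in the text. On a $2$-cell $f:(M,\phi_M)\to(N,\phi_N)$ I would set $c(f):=\bigoplus_j \mathrm{id}_{c^{(j)}}\otimes f^{j}$, where $f^j$ is $f$ tensored with the appropriate identities; the compatibility square defining a $2$-cell of $\mathcal{T}$ is precisely what guarantees $f^j$ is well-defined independently of how one moves $T$'s across, so this is the one spot where the $2$-cell condition gets used.

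Next I would verify functoriality on each hom-category $\mathcal{T}(\bA,\bB)\to\mathcal{T}(\mathbb{O}\bA,\mathbb{O}\bB)$: identities and composition of $2$-cells are preserved because tensoring with a fixed object and with identity maps is functorial. Then I would check that $\mathbb{O}_{(c,x)}$ is compatible with horizontal composition, i.e.\ that there are coherent natural isomorphisms
$$c(M,\phi_M)\otimes_{c(B,S)} c(\tilde M,\phi_{\tilde M}) \;\cong\; c\bigl(M\otimes_B\tilde M,\;(\phi_M\otimes 1)\circ(1\otimes\phi_{\tilde M})\bigr)$$
and $\mathbb{O}_{(c,x)}(I_{\bA})\cong I_{\mathbb{O}_{(c,x)}\bA}$. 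Both isomorphisms are built degree-by-degree from the associativity of tensor product over $F$ and over $A$, together with bookkeeping of how many $T$-factors land on each side; since $\mathcal{T}$ is only a \emph{weak} $2$-category, ``endo-$2$-functor'' here means a pseudofunctor, so I only need these to be isomorphisms, not equalities. Finally I would check the pseudofunctor coherence axioms (the two hexagon/pentagon-type diagrams relating the composition isomorphisms to the associators of $\mathcal{T}$, and the unit triangles); these reduce, after unwinding, to the corresponding pentagon and triangle for $\otimes_F$ and $\otimes_A$, which hold by hypothesis in $\mathcal{T}$.

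The main obstacle, and the only genuinely non-formal step, is showing the $1$-cell assignment is well-defined: the ``twisted'' module $M^j$ must be constructed so that the left $c(A,T)$- and right $c(B,S)$-actions are simultaneously compatible, and this requires that the various ways of using $\phi_M$ to commute $T$ past $M$ agree. Concretely, after writing $T^{\otimes a}\otimes_A M\otimes_B S^{\otimes b}$ with $a+b=j$, one must check the transition maps induced by $\phi_M$ between different values of $(a,b)$ satisfy a cocycle/coherence condition; this follows because $\phi_M$ is an isomorphism of bimodules and $M$ carries honest $A$- and $B$-module structures, but it must be stated carefully. Everything downstream---functoriality on $2$-cells, the composition isomorphism, and the coherence axioms---is then a routine diagram chase reducing to the already-assumed associativity and unit constraints of $\mathcal{T}$, so I would state those verifications briefly and leave the explicit diagrams to the reader.
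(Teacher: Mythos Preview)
Your proposal is correct and follows essentially the same approach as the paper: the paper's $1$-cell assignment is $c(A,T)\otimes_A M$, which is exactly your $\bigoplus_j c^{(j)}\otimes_F M^j$, with the right $c(B,S)$-action induced from $\phi_M$ via the graded isomorphism $A[T]\otimes_A M\cong M\otimes_B B[S]$. The paper streamlines what you call the ``cocycle/coherence condition'' by passing once through the tensor algebra $A[T]$ rather than working degree-by-degree, but the content is the same.
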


\proof
We have the following data:
\begin{itemize}
\item $\mathbb{O}_{(c,x)}$ defines a correspondence on $0$-cells, taking 
$\bA=(A,{}_A T_A)$ to $(c,x)\ast\bA:= (c(A,T),x(A,T))$ as defined above;
\item $(\mathbb{O}_{(c,x)})_{\bA,\bB }$ is a functor 
\begin{align*}
\mathcal{T}(\bA,\bB) &\rightarrow \mathcal{T}(\mathbb{O}_{(c,x)}(\bA),\mathbb{O}_{(c,x)}(\bB))\\
(M,\phi_M) &\mapsto (c(A,T) \otimes_A M, \mathbb{O}_{(c,x)}(\phi_M))\\
 (f:M \rightarrow N) &\mapsto (1 \otimes f: c(A,T)\otimes M \overset{1 \otimes f}{\rightarrow} c(A,T) \otimes N);
\end{align*}
\end{itemize}

We first need to check that $c(A,T) \otimes_A M$ is indeed an $(c(A,T), c(B,T))$-bimodule. 
However, due to the isomorphism $\phi_M$ we have and isomorphism $A[T]\otimes_A M \cong M\otimes_B B[T]$, 
hence $c\otimes_F A[T]\otimes_A M \cong M\otimes_B B[T]\otimes_F c$ and this is a graded isomorphism. 
Hence it restricts to $c(A,T) \otimes_A M \cong M \otimes_Bc(B,T)$.

Next we need to define 
$$\mathbb{O}_{(c,x)}(\phi_M): 
x(A,T) \otimes_{c(A,T)} (c(A,T) \otimes_A M)  \rightarrow (M \otimes_B  c(B,T))\otimes_{c(B,T)} x(B,T).$$ 
This is done in the same way. We take the isomorphism $A[T]\otimes_A M \rightarrow M\otimes_B B[T]$,
and tensor it with $x$ to obtain an isomorphism 
$x\otimes_F A[T]\otimes_A M \rightarrow M\otimes_B B[T]\otimes_F x$ which is graded, 
and hence restricts to the homogeneous part
$x(A,T) \otimes_{c(A,T)} (c(A,T) \otimes_A M)\cong x(A,T)  \otimes_A M  \rightarrow M \otimes_B   x(B,T)\cong (M \otimes_B  c(B,T))\otimes_{c(B,T)} x(B,T)$.

It remains to check that the diagram
$$\xymatrix{x(A,T) \otimes_{c(A,T)} (c(A,T) \otimes_A M) \ar[r]^{\mathbb{O}_{(c,x)}(\phi_M)} \ar[d]^{1 \otimes 1 \otimes f}& (M \otimes_B  c(B,T))\otimes_{c(B,T)} x(B,T)  \ar[d]^{f \otimes 1 \otimes 1}\\
 x(A,T) \otimes_{c(A,T)} (c(A,T) \otimes_A N)
 \ar[r]^{\phi_N}& (N \otimes_B  c(B,T))\otimes_{c(B,T)} x(B,T)
}$$
commutes.
This follows by the same argument as above from commutativity of the diagram
$$\xymatrix{A[T] \otimes_A M \ar[r]^{A[\phi_M]} \ar[d]^{1 \otimes f}& M \otimes_B B[T] \ar[d]^{f \otimes 1}\\
A[T]  \otimes_A N
 \ar[r]^{\phi_N}& N\otimes_B B[T],
}$$
which follows directly from $\phi_N \circ (1\otimes f)=(f \otimes 1)\circ \phi_M$.

\begin{itemize}
\item For $\bA, \bB, \bC \in \mathcal{T}$, we have two functors from the category
$\mathcal{T}(\bA, \bB) \times \mathcal{T}(\bB, \bC)$ to
$\mathcal{T}(\mathbb{O}_{(c,x)}(\bA), \mathbb{O}_{(c,x)}(\bC))$,
obtained by applying the functors $\mathbb{O}_{(c,x)}$, and composition, in different orders.
There is a natural transformation $\eta_{\bA \bB \bC}$ between these functors.
\end{itemize}
Indeed, we define
$$\eta_{\bA \bB \bC}(M,N): 
c(A,T) \otimes_A M \otimes_{c(B,T)} c(B,T) \otimes_B N \rightarrow c(A,T) \otimes_A M \otimes_B \otimes N,$$
to be the natural isomorphism obtained by contracting along the isomorphism $M \otimes_{c(B,T)} c(B,T) \cong M$,
for $(M, \phi) \in \mathcal{T}(\bA, \bB)$, and $(N, \psi) \in \mathcal{T}(\bB, \bC)$.

The natural transformation $\eta_{\bA \bB \bC}$ is obviously compatible with identity morphisms,
and therefore $\mathbb{O}_{(c,x)}$ is a $2$-functor, as required.
\endproof

\begin{example}
Let $c_p$ be the quasi-hereditary algebra with $p$ simple modules, 
defined in the first chapter (see Remark \ref{inductiveconstruction}). 
This algebra has various incarnations, such as $A_1$, $B_1$, $C_1$, and $D_1$.
In particular, applying the operator $\mathcal{C}_p$ to the trivial algebra $F$ with its trivial bimodule $F$,
we obtain the algebra $c_p$, along with a $c_p$-$c_p$-bimodule, which we denote $x_p$.

Looking at its definition via quiver and relations (see Remark \ref{inductiveconstruction}), 
we see that $c_p = \oplus_{i, j \in \mathbb{Z}} c_p^{(i), (j)}$ 
is $\mathbb{Z}^2_+$-graded, with arrows $\xi$ in degree $(1,0)$ and arrows $\eta$ in degree $(0,1)$. 
Summing these gradings, we obtain a $\mathbb{Z}_+$-grading on $c$.
We define the operator $\mathbb{O}_p$ on $\mathcal{T}$ to be the operator $\mathbb{O}_{(c_p,x_p)}$,
defined with respect to this grading.
Thus,
$$\mathbb{O}_p(A,T) = (c_p,x_p) \ast \bA = (c_p(\bA), x_p(\bA)).$$
\end{example}

\ssection{3. Twisted tensor products.}\label{construction}

Here, we define twisted tensor products in a differential graded setting.
We show that, under favourable conditions, taking a twisted tensor product respects derived equivalences.

We first need to introduce some standard notation. 
In an additive category with an object $x$, we denote by $\add x$ the closure under direct sums and summands. 
If $x$ is an object in a triangulated category, 
$x \perf$ denotes the closure under direct sums, summands, shifts and extensions. 

If $c,d$ are finite dimensional algebras, whose bounded derived categories are triangle equivalent, 
then there exists a Rickard tilting complex ${}_c x_d$, 
for which the pair of adjoint functors $(x \otimes_d^L -, RHom_c(x,-))$ induces the equivalence \cite{Rickard}.
Here, by a Rickard tilting complex, we mean a complex of $(c,d)$-bimodules, 
projective on both sides, such that the right multiplication morphism $d \rightarrow RHom_c(x,x)$
and the left multiplication morphism $c \rightarrow RHom_d(x,x)$ are quasi-isomorphisms. 

We work in this section with finite dimensional graded algebras and dg algebras.
We also work with finite dimensional graded dg algebras.
These are algebras $\alpha$ with a bigrading $\alpha = \oplus_{i,j \in \mathbb{Z}} \alpha^{(i),(j)}$,
equipped with a differential $d$ of degree $(0,1)$, which obeys the Leibnitz rule 
with respect to the $\mathbb{Z}$-grading obtained by forgetting the $i$-coordinate.
Therefore, with respect to the $\mathbb{Z}$-grading obtained by forgetting the $i$-coordinate,
$\alpha$ is a dg algebra, 
whilst with respect to the grading obtained by forgetting the $j$-coordinate,
$\alpha$ is an ordinary graded algebra. 
We call the grading obtained by forgetting the $i$-coordinate \emph{the homological grading},
whilst we call the grading obtained by forgetting the $j$-coordinate \emph{the standard grading}. 

We denote a shift by $k$ in the standard grading by $\langle k \rangle$.
We thus work with the convention that 
$(M \langle k \rangle )^{(i)} := M^{(i+k)}$ for a standardly graded object 
$M = \oplus_{i \in \mathbb{Z}} M^{(i)}$.

If $\alpha$ is graded dg algebra, whilst $m$ and $n$ are graded dg $\alpha$-modules, 
we denote by $\Hom_\alpha(m,n)$ the collection of all $\alpha$-module homomorphisms between $m$ and $n$,
and $\grHom_\alpha(m,n)$ the collection of $\alpha$-module homomorphisms which respect the standard grading.
Thus $\Hom_\alpha(m,n) = \oplus_{i \in \mathbb{Z}} \Hom^{(i)}_\alpha(m,n)$  
is a direct sum of chain complexes $\Hom_\alpha^{(i)}(m,n) = \grHom_\alpha(m,n \langle j \rangle)$.
We denote by $D_{dg}(\alpha)$ the derived category of all dg $\alpha$-modules,
where the dg structure is taken with respect to the homological grading.


We now discuss twisted tensor products in the world of dg-algebras.

Let $c$ and $d$ be finite dimensional $\mathbb{Z}_+$-graded algebras. 
Let ${}_c x_d$ be a positively graded complex of $(c,d)$-bimodules. 
Let also $a$ be a finite dimensional algebra, and ${}_a t_a$ a complex of $(a,a)$-bimodules.

\begin{defn}\label{c(a,t)}
We define a graded dg-algebra $c(a,t)$ as the vector space 
$c(a,t) := \underset{j \geq 0}{\bigoplus}c^{(j)} \otimes_F t^j$, 
where $c^{(j)}$ is the $j$th graded component of $c$, 
and the formal expression $t^j$ denotes the $j$-fold tensor product over $a$ of $t$ with itself. 
The multiplication is given by 
$$( c_1 \otimes t_1\cdots t_j )( c_2\otimes t_{j+1}\cdots t_{j+k} )= (c_1c_2\otimes t_1\cdots t_j\cdots t_{j+k}),$$ 
for $c_1\in c^{(j)},c_2 \in c^{(k)}$, 
and $t_1\cdots t_j= t_1 \otimes \cdots \otimes  t_j$ for $t_i \in t$. 
The differential is the total differential on $t^j$ within each standard homogeneous piece. 
\end{defn}

As before, this is clearly an associative algebra. To check that this is indeed a dg-algebra, 
consider the dg-algebra $c\otimes_F a[t]$, 
and consider $c(a,t)$ as the subalgebra which consists of homogeneous tensors. 
It then follows immediately that this is a dg-subalgebra since it is closed under multiplication and the differential. 

Note that we have two independent gradings here, 
the standard grading coming from $c$ and the homological grading coming from the complex $t$. 
So in fact $c(a,t)$ is a graded dg-algebra.

\begin{defn} \label{x(a,t)} We define the complex 
$x(a,t):= \underset{j \geq 0}{\bigoplus} x^{(j)}\otimes_F t^j$, 
where $x^{(j)}$ is the $j$th graded component of $x$ and as such a complex in its own right. 
The differential on $x(a,t)$ is the total differential on $t^j$ within each standard homogeneous piece. 
\end{defn}

The complex $x(a,t)$ admits a $c(a,t)$-action, 
given by the obvious multiplication 
$$( c_1\otimes t_1\cdots t_j )(x_1 \otimes t_{j+1}\cdots t_{j+k} )= ( c_1x_1 \otimes t_1\cdots t_j\cdots t_{j+k}),$$ 
for $c_1 \in c, x_1 \in x^{(k)}$ and $t_i \in t$. 
This is indeed a graded dg-algebra action, since it is a restriction of the 
graded dg algebra action of $c \otimes_F a[t]$ on $x \otimes_F a[t]$.

The algebra $d(a,t)$ acts similarly on the right of $x(a,t)$. 
By construction, left and right actions commute, 
making $x(a,t)$ into a graded $(c(a,t),d(a,t))$-dg-bimodule.

The main theorem of this section is the following:

\begin{thm} \label{derivedequivalence}
Suppose that $x$ and $t$ are Rickard tilting complexes.
Then the dg bimodule $x(a,t)$ induces a triangle equivalence 
$$\xymatrix{
D_{dg}(c(a,t)) \ar@/^/[r]^{RHom(x(a,t),-)} & \ar@/^/[l]^{x(a,t) \otimes^{L}_{d(a,t)} -} D_{dg}(d(a,t))
}$$
\end{thm}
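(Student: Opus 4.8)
The plan is to verify the hypotheses of Rickard's theorem for the bimodule $x(a,t)$, by computing the relevant $\mathrm{RHom}$-complexes and showing the two multiplication maps $d(a,t) \to R\mathrm{Hom}_{c(a,t)}(x(a,t), x(a,t))$ and $c(a,t) \to R\mathrm{Hom}_{d(a,t)}(x(a,t), x(a,t))$ are quasi-isomorphisms; once this is done the stated triangle equivalence is automatic. The first step is to exhibit a convenient dg-projective resolution (or, even better, to argue that $x(a,t)$ is already dg-projective as a one-sided module up to a harmless summand) so that $R\mathrm{Hom}$ can be replaced by ordinary $\mathrm{Hom}$ of complexes. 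Here I would exploit the two independent gradings: $c(a,t)$ carries the standard grading inherited from $c$ together with the homological grading coming from $t$. With respect to the standard grading, $x(a,t) = \bigoplus_{j \geq 0} x^{(j)} \otimes_F t^j$ decomposes, and within each standard-degree piece we are looking at a finite complex built from copies of $t$ tensored together over $a$; since $t$ is projective on both sides, each $t^j$ is projective on both sides over $a$, and the graded pieces of $c(a,t)$, $d(a,t)$ are built analogously.

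The key computational step is to identify $R\mathrm{Hom}_{c(a,t)}(x(a,t), x(a,t))$. I expect this to reduce, grading-piece by grading-piece, to a statement about $R\mathrm{Hom}$ over $a$ of tensor powers of $t$, which one controls using that $t$ is Rickard — so $R\mathrm{Hom}_a(t,t) \simeq a$ — together with the fact that $x$ is Rickard over $(c,d)$, so $R\mathrm{Hom}_c(x,x) \simeq d$. Concretely, the strategy is to untwist: just as in the proof of the previous lemma, the isomorphism data makes $c \otimes_F a[t] \otimes$-things commute with $x \otimes_F a[t] \otimes$-things up to graded isomorphism, so $R\mathrm{Hom}_{c(a,t)}(x(a,t), -)$ should factor as (roughly) ``apply $R\mathrm{Hom}_c(x,-)$ in the $c$-variable, and $R\mathrm{Hom}_a(t,-)$ in the tensor-power variables, then take homogeneous parts.'' Feeding in $x(a,t)$ itself and using both Rickard hypotheses collapses each graded piece to the corresponding graded piece of $d(a,t)$, and one checks the collapse is realized by the multiplication map. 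The symmetric computation over $d(a,t)$ gives the other quasi-isomorphism. Finally, one invokes the dg-enhanced Rickard/Keller theorem (a dg bimodule for which both multiplication maps are quasi-isomorphisms induces a triangle equivalence of derived categories) to conclude.

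The main obstacle I anticipate is bookkeeping the interaction of the two gradings under $R\mathrm{Hom}$: $\mathrm{Hom}$ does not commute with infinite direct sums in general, so one must be careful that ``take homogeneous parts'' and ``compute $R\mathrm{Hom}$'' can be interchanged. This is where finite dimensionality of $a$, $c$, $d$ and positivity of the grading on $c$ and $x$ should be used: in each fixed standard degree only finitely many tensor powers $t^j$ contribute, so the relevant complexes are finite and the $R\mathrm{Hom}$ computation is genuinely a finite one, piece by piece. A secondary technical point is checking that the chosen resolution of $x(a,t)$ is compatible with the graded dg-algebra structure (i.e.\ is a resolution by graded dg-projectives, not merely dg-projectives), so that homogeneity is preserved throughout; I would handle this by resolving $t$ over $a$ once and propagating the resolution through the tensor-power and twisted-tensor-product constructions, which are exact in the relevant variables because everything in sight is projective over $a$.
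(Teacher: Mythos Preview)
Your computational strategy is essentially the paper's: decompose $x(a,t)$ along the standard grading, use that $_cx$ is a sum of shifted projectives $ce_{\lambda_k}\langle -i_k\rangle$, compute $\Hom_{c(a,t)}(x(a,t),x(a,t))$ as a sum of terms of the form $\Hom_a(t^{i_k}, \Hom_{c^{(0)}}(\ldots)\otimes t^{j+i_k})$, and then cancel powers of $t$ via the quasi-isomorphism $a\to\Hom_a(t,t)$. The paper carries this out in Lemmas~\ref{homsapart} and~\ref{cancelt}, and then checks explicitly that the resulting quasi-isomorphism agrees with the right-multiplication map from $d(a,t)$.

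The difference is in how you package the endgame. You propose to check \emph{both} multiplication maps and invoke a symmetric Rickard-type criterion; the paper instead appeals to Keller's one-sided criterion (\cite{Ke}, Lemma~3.10), which asks for three things: $x(a,t)\in c(a,t)\perf$, $x(a,t)$ generates $D_{dg}(c(a,t))$, and the single quasi-isomorphism $d(a,t)\to\Hom_{c(a,t)}(x(a,t),x(a,t))$. Your symmetric route is legitimate and arguably tidier, since the second quasi-isomorphism follows by the same computation with the roles of $c$ and $d$ swapped, whereas the paper must supply a separate (and slightly different in flavour) argument for generation (Lemma~\ref{gen}).

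There is, however, one point you should tighten. The criterion you invoke --- ``a dg bimodule for which both multiplication maps are quasi-isomorphisms induces an equivalence'' --- is not correct as stated: you also need $x(a,t)$ to be \emph{compact} (i.e.\ perfect) over $c(a,t)$ and over $d(a,t)^{op}$. You address dg-projectivity, which lets you replace $R\Hom$ by $\Hom$, but dg-projective is not the same as compact. The paper handles this explicitly in Lemma~\ref{perf}: since $_cx$ is a finite sum of shifted graded projectives and $t\in a\perf$, the object $x(a,t)$ is a finite iterated extension of objects of the form $(c(a,t)\tilde e_\lambda)\otimes_a t^m$ with $t^m\in a\perf$, hence lies in $c(a,t)\perf$. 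You should insert this step (and its mirror over $d(a,t)$) before appealing to your two-sided criterion; once you do, your outline goes through.
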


Two finite dimensional algebras are derived equivalent if, and only if, 
there is a Rickard tilting complex inducing that equivalence \cite{Rickard}.
So roughly speaking, 
this theorem says that if $x$ and $t$ induce derived equivalences, so does $x(a,t)$.
In particular, if $_cx_c$ is a Rickard tilting complex, then $\mathbb{O}_{(c,x)}$ respects derived equivalences.

To prove the theorem, we apply B. Keller's Morita theory for differential graded algebras.
Indeed, by Lemma 3.10 of Keller's ICM article \cite{Ke}, to prove Theorem \ref{derivedequivalence},
it is sufficient to show that 
$_{c(a,t)}x(a,t) \in c(a,t) \perf$, that $_{c(a,t)}$ is a generator of $D_{dg}(c(a,t))$,
and that the natural morphism 
$$d(a,t) \rightarrow \Hom_{c(a,t)}(x(a,t), x(a,t))$$ is a quasi-isomorphism.
We do this in a sequence of steps.

\begin{lem}\label{preserveqim}
If two positively graded complexes $x$ and $y$ are quasi-isomorphic as graded complexes, 
then $x(a,t)$ and $y(a,t)$ are quasi-isomorphic complexes.
\end{lem}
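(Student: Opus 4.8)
The plan is to reduce the statement about the twisted tensor products $x(a,t)$ and $y(a,t)$ to the given quasi-isomorphism between $x$ and $y$, by exploiting the fact that each graded component $x^{(j)}$ (resp. $y^{(j)}$) is itself a complex, and that $x(a,t)$ decomposes, as a complex, into a direct sum over $j$ of the pieces $x^{(j)} \otimes_F t^j$. Since a direct sum of quasi-isomorphisms is a quasi-isomorphism, it suffices to produce, for each fixed $j$, a quasi-isomorphism $x^{(j)} \otimes_F t^j \to y^{(j)} \otimes_F t^j$ (compatible with the differential, which on both sides is the total differential on the $t^j$-factor within that standard-homogeneous piece).

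First I would fix the chosen graded quasi-isomorphism $\psi \colon x \to y$ and observe that, because it respects the standard grading, it restricts in each standard degree $j$ to a quasi-isomorphism of complexes $\psi^{(j)} \colon x^{(j)} \to y^{(j)}$. Next I would form $\psi^{(j)} \otimes_F \mathrm{id}_{t^j} \colon x^{(j)} \otimes_F t^j \to y^{(j)} \otimes_F t^j$. The point is that this is a map of complexes with respect to the total differential: the differential has a ``$\psi$-component'' (the differential of $x^{(j)}$, resp. $y^{(j)}$) and a ``$t$-component'' (the total differential of $t^j$), and $\psi^{(j)} \otimes \mathrm{id}$ is visibly compatible with the latter and is a chain map for the former. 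One then needs that $\psi^{(j)} \otimes_F \mathrm{id}_{t^j}$ is a quasi-isomorphism. Because everything is over the field $F$, tensoring a quasi-isomorphism of complexes of $F$-vector spaces with any fixed complex of $F$-vector spaces preserves quasi-isomorphism — this is the Künneth theorem (or simply the fact that $\otimes_F$ is exact and commutes with homology over a field). Here $t^j$ is a bounded complex of finite-dimensional $F$-vector spaces, so no flatness or boundedness subtleties intervene.

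Assembling, the direct sum $\bigoplus_{j \geq 0} (\psi^{(j)} \otimes_F \mathrm{id}_{t^j}) \colon x(a,t) \to y(a,t)$ is then a chain map (it is the restriction to homogeneous tensors of $\psi \otimes \mathrm{id}$ on $x \otimes_F a[t] \to y \otimes_F a[t]$, hence automatically compatible with the dg structure) which is a quasi-isomorphism in each standard degree, hence a quasi-isomorphism. One should also remark that the map is a map of complexes of $(c(a,t), d(a,t))$-bimodules if $\psi$ was a map of graded $(c,d)$-bimodule complexes, so the statement can be upgraded to a bimodule quasi-isomorphism if needed later; for the bare statement as phrased, plain quasi-isomorphism of complexes is all that is claimed.

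The only mild obstacle is bookkeeping around the total differential: one must check carefully that on $x^{(j)} \otimes_F t^j$ the differential used in Definition \ref{x(a,t)} genuinely splits as ``internal differential of $x^{(j)}$ plus total differential of $t^j$'' in a way that makes $\psi^{(j)} \otimes \mathrm{id}$ a chain map; but since the differential is declared to be ``the total differential on $t^j$ within each standard homogeneous piece'', and $x^{(j)}$ carries its own differential as a complex in its own right, the two contributions commute with $\psi^{(j)} \otimes \mathrm{id}$ separately, and the Künneth/exactness-of-$\otimes_F$ argument then finishes the proof. This step is routine, so the proof is short.
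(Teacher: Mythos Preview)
Your proposal is correct and follows essentially the same approach as the paper: decompose the graded quasi-isomorphism into its standard-degree components $\psi^{(j)}$, tensor each with $\mathrm{id}_{t^j}$ over the field $F$, and use that tensoring over a field preserves quasi-isomorphisms to conclude degreewise and hence globally. The paper's proof is a terse version of exactly this argument; your additional bookkeeping about the total differential and the bimodule compatibility is correct but goes beyond what the paper records.
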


\proof
Since we assume our quasi-isomorphism $\phi$ to be graded, 
we can write it as a sum of quasi-isomorphisms $\phi_i$ (of vector spaces), 
where $\phi_i$ is maps $x$ in standard degree $i$ to $y$ in standard degree $i$. 
Then we can define a map 
$\phi_i(a,t): x(a,t)^{(i)}\cong x^{(i)}\otimes t^i \rightarrow y(a,t)^{(i)}\cong y^{(i)}\otimes t^i$ 
to be $\phi_i \otimes id$. 
Since we're tensoring over $F$ this will remain a quasi-isomorphism, 
so $x(a,t)$ and $y(a,t)$ are quasi-isomorphic in every degree and hence altogether.
\endproof

We now compare the structure of $\Hom_{c(a,t)}(x(a,t),x(a,t))$ to the structure of $\Hom_c(x,x)$. 
As a $c$-module $x$ is the direct sum of projectives in possibly shifted degrees, 
i.e. $x = \underset{1\leq k \leq r}{\bigoplus} ce_{\lambda_k} \langle -i_k\rangle$ for $i_k >0$. 
The degree $j$ component $\Hom_c^{(j)}(x,x)$ -- 
where we take the grading with respect to the grading of $c$ and ignore that coming from the complex structure -- 
is then given by 
\begin{equation*}\begin{split}\Hom_c^{(j)}&(\underset{1\leq k \leq r}{\bigoplus} ce_{\lambda_k} \langle -i_k\rangle,\underset{1\leq l \leq r}{\bigoplus} ce_{\lambda_l} \langle -i_l\rangle) \\&= \grHom_c( \underset{1\leq k \leq r}{\bigoplus} ce_{\lambda_k} \langle -i_k\rangle,\underset{1\leq l \leq r}{\bigoplus} ce_{\lambda_l} \langle j-i_l  \rangle)\\
& \cong\underset{1\leq k \leq r}{\bigoplus}\underset{1\leq l \leq r}{\bigoplus} \grHom_c(  ce_{\lambda_k},ce_{\lambda_l} \langle j-i_l+i_k\rangle).\end{split}\end{equation*}
Since the $ ce_{\lambda_k}$ are projective, 
\begin{equation*}\begin{split} \underset{1\leq k \leq r}{\bigoplus}\underset{1\leq l \leq r}{\bigoplus} \grHom_c&(  ce_{\lambda_k},ce_{\lambda_l} \langle j-i_l+i_k \rangle)\\& \cong \underset{1\leq k \leq r}{\bigoplus}\underset{1\leq l \leq r}{\bigoplus} \grHom_{c^{(0)}}(  ce_{\lambda_k}^{(0)},(ce_{\lambda_l} \langle j-i_l+i_k \rangle)^{(0)})\\
&= \underset{1\leq k \leq r}{\bigoplus}\underset{1\leq l \leq r}{\bigoplus} \Hom_{c^{(0)}}(  ce_{\lambda_k}^{(0)},ce_{\lambda_l}).\end{split}\end{equation*}
Now we consider the degree $j$ component of $\Hom_{c(a,t)}(x(a,t),x(a,t))$.

\begin{lem} \label{homsapart}
$$\Hom^{(j)}_{c(a,t)}(x(a,t),x(a,t)) \cong 
\underset{1\leq k \leq r}{\bigoplus} \underset{1\leq l \leq r}{\bigoplus}\Hom_a(t^{i_k}, \Hom_{c^{(0)}}((c e_{\lambda_k})^{(0)},(c e_{\lambda_l}))\otimes t^{j+i_k})$$
\end{lem}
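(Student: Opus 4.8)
The idea is to compute $\Hom^{(j)}_{c(a,t)}(x(a,t),x(a,t))$ directly from the module structure of $x(a,t)$, mimicking the analysis of $\Hom^{(j)}_c(x,x)$ carried out just above the statement. As a $c$-module, $x = \bigoplus_{1\leq k\leq r} ce_{\lambda_k}\langle -i_k\rangle$, so as a $c(a,t)$-module we have $x(a,t) = \bigoplus_{1\leq k\leq r} c(a,t)(e_{\lambda_k}\otimes 1)\langle -i_k\rangle$, where each summand is a standardly-shifted projective $c(a,t)$-module. First I would record this, so that
$$\Hom^{(j)}_{c(a,t)}(x(a,t),x(a,t)) \cong \bigoplus_{1\leq k\leq r}\bigoplus_{1\leq l\leq r} \grHom_{c(a,t)}\bigl(c(a,t)(e_{\lambda_k}\otimes 1),\, c(a,t)(e_{\lambda_l}\otimes 1)\langle j-i_l+i_k\rangle\bigr).$$

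Next, by projectivity, a graded homomorphism out of $c(a,t)(e_{\lambda_k}\otimes 1)$ is determined by the image of the generator $e_{\lambda_k}\otimes 1$, which must land in the standard-degree-$i_k$ piece of the target after the shift; that is, in $\bigl(c(a,t)(e_{\lambda_l}\otimes 1)\langle j-i_l+i_k\rangle\bigr)^{(i_k)} \cong \bigl(c(a,t)(e_{\lambda_l}\otimes 1)\bigr)^{(j+i_k)}$. Now unravel the twisted tensor product: by Definition~\ref{c(a,t)}, the standard-degree-$m$ component of $c(a,t)(e_{\lambda_l}\otimes 1)$ is $c^{(m)}e_{\lambda_l}\otimes_F t^m$. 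So the generator $e_{\lambda_k}\otimes 1$ maps into $c^{(j+i_k)}e_{\lambda_l}\otimes_F t^{j+i_k}$, but subject to the $a$-bimodule compatibility: the homomorphism must be a map of $c(a,t)$-modules, and the left $a$-action on $c(a,t)(e_{\lambda_k}\otimes 1)$ at standard degree $i_k$ involves $t^{i_k}$. Tracking how the $a$-action threads through, the space of such generator-images is exactly $\Hom_a(t^{i_k},\, c^{(j+i_k)}e_{\lambda_l}\otimes_F t^{j+i_k})$, where the $c^{(j+i_k)}e_{\lambda_l}$ factor is inert under the $a$-action. Finally, identify $c^{(j+i_k)}e_{\lambda_l}$ as a vector space with $\Hom_{c^{(0)}}((ce_{\lambda_k})^{(0)}, ce_{\lambda_l})$ graded-degree-$(j+i_k)$ piece — using the standard isomorphism $\grHom_c(ce_{\lambda_k}, ce_{\lambda_l}\langle m\rangle)\cong \Hom_{c^{(0)}}((ce_{\lambda_k})^{(0)},(ce_{\lambda_l}\langle m\rangle)^{(0)})$ from the computation preceding the lemma — to rewrite the answer in the stated form $\Hom_a(t^{i_k}, \Hom_{c^{(0)}}((ce_{\lambda_k})^{(0)},(ce_{\lambda_l}))\otimes t^{j+i_k})$, summed over $k$ and $l$.

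I expect the main obstacle to be bookkeeping the interaction of the two gradings (standard vs.\ homological) with the $a$-bimodule structure: specifically, verifying that a graded $c(a,t)$-module map is \emph{precisely} the data of an $a$-module map $t^{i_k}\to c^{(j+i_k)}e_{\lambda_l}\otimes t^{j+i_k}$, with no extra compatibility conditions beyond $a$-linearity, and that the $c^{(0)}$-module structure on $(ce_{\lambda_k})^{(0)}$ exactly matches the residual action after the twisted tensor product is stripped away. The cleanest route is probably to work with $c\otimes_F a[t]$ (the ambient algebra of which $c(a,t)$ is the homogeneous-tensor subalgebra) and note that $x(a,t)$ and its endomorphisms sit inside the correspondingly described objects for $c\otimes_F a[t]$; the homogeneous-degree-$j$ restriction then pins down the formula. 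One should also check the differential plays no role in the \emph{underlying graded vector space} computation of Lemma~\ref{homsapart} (it only affects $\Hom$ as a complex, which is what later steps need), so the identification is purely as graded vector spaces and is safe to carry out degreewise.
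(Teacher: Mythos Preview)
Your very first structural claim is wrong, and the rest of the argument never recovers from it. You assert that as a $c(a,t)$-module
\[
x(a,t) \;=\; \bigoplus_{1\le k\le r} c(a,t)(e_{\lambda_k}\otimes 1)\langle -i_k\rangle,
\]
but this is not what the twisted tensor product gives you. Unwind the definitions: the standard-degree-$j$ component of $x(a,t)$ is $x^{(j)}\otimes t^j = \bigoplus_k (ce_{\lambda_k})^{(j-i_k)}\otimes t^j$, whereas the degree-$j$ component of $c(a,t)\tilde e_{\lambda_k}\langle -i_k\rangle$ is $(ce_{\lambda_k})^{(j-i_k)}\otimes t^{\,j-i_k}$. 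The exponents on $t$ differ by $i_k$. The point is that in $x(a,t)$ the power of $t$ records the \emph{absolute} standard degree of $x$, not the degree relative to the shifted generator; so the correct decomposition (the one the paper uses) is
\[
x(a,t) \;\cong\; \bigoplus_{1\le k\le r} c(a,t)\tilde e_{\lambda_k}\langle -i_k\rangle \otimes_a t^{i_k},
\]
with an extra tensor factor $t^{i_k}$ on the right.

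This missing factor is precisely the source of the $\Hom_a(t^{i_k},-)$ in the statement: once you have $c(a,t)\tilde e_{\lambda_k}\otimes_a t^{i_k}$ in the first variable, tensor--Hom adjunction over $a$ pulls $t^{i_k}$ across to give $\Hom_a\bigl(t^{i_k},\Hom_{c(a,t)}(c(a,t)\tilde e_{\lambda_k},-)\bigr)$, and then projectivity of $c(a,t)\tilde e_{\lambda_k}$ finishes the computation. With your decomposition the summands are already projective over $c(a,t)$, a graded map is determined freely by the image of the generator, and no $\Hom_a(t^{i_k},-)$ can appear. Your attempt to conjure it from an ``$a$-bimodule compatibility'' on the generator $e_{\lambda_k}\otimes 1$ does not work: that generator sits in standard degree $0$ of $c(a,t)\tilde e_{\lambda_k}$, where the $a$-action is the plain one through $c^{(0)}\otimes a$ and involves no copies of $t$ at all. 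Fix the decomposition and the adjunction does all the work cleanly.
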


\proof
 Define $\tilde e := e \otimes 1_A$ for an idempotent $e \in c$.
Then, by definition, $x(a,t)$ decomposes as $x(a,t):= \underset{1\leq k \leq r}{\bigoplus} c(a,t) \tilde e_{\lambda_k} \langle -i_k \rangle \otimes t^{i_k}$ as a $c(a,t)$-module. Hence 
\begin{equation*}\begin{split}\Hom&^{(j)}_{c(a,t)}(x(a,t),x(a,t)) \\
&\cong \Hom^{(j)}_{c(a,t)}(\underset{1\leq k \leq r}{\bigoplus} c(a,t) \tilde e_{\lambda_k} \langle -i_k \rangle \otimes_a t^{i_k},\underset{1\leq l \leq r}{\bigoplus} c(a,t) \tilde e_{\lambda_l} \langle- i_l \rangle \otimes_a t^{i_l} )\\
&= \grHom_{c(a,t)}( \underset{1\leq k \leq r}{\bigoplus} c(a,t) \tilde e_{\lambda_k} \langle -i_k\rangle \otimes_a t^{i_k},\underset{1\leq l \leq r}{\bigoplus} c(a,t) \tilde e_{\lambda_l} \langle j-i_l  \rangle \otimes_a t^{i_l})\\
&\cong \grHom_{c(a,t)}( \underset{1\leq k \leq r}{\bigoplus} c(a,t) \tilde e_{\lambda_k} \otimes_a t^{i_k},\underset{1\leq l \leq r}{\bigoplus} c(a,t) \tilde e_{\lambda_l} \langle j-i_l+i_k \rangle \otimes_a t^{i_l})\\
&\cong \Hom_a(t^{i_k}, \grHom_{c(a,t)}(\underset{1\leq k \leq r}{\bigoplus} c(a,t) \tilde e_{\lambda_k},  \underset{1\leq l \leq r}{\bigoplus} c(a,t) \tilde e_{\lambda_l} \langle j-i_l+i_k \rangle \otimes_a t^{i_l}))\\
& \cong 
\underset{1\leq k \leq r}{\bigoplus} \underset{1\leq l \leq r}{\bigoplus}\Hom_a(t^{i_k}, \grHom_{c(a,t)}(c(a,t) \tilde e_{\lambda_k}, c(a,t) \tilde e_{\lambda_l} \langle j-i_l+i_k \rangle \otimes_a t^{i_l}))\\
&\cong \underset{1\leq k \leq r}{\bigoplus} \underset{1\leq l \leq r}{\bigoplus}\Hom_a(t^{i_k}, \grHom_{c(a,t)}(c(a,t) \tilde e_{\lambda_k}, c(a,t) \tilde e_{\lambda_l} \langle j-i_l+i_k \rangle ) \otimes_a t^{i_l})
\end{split}\end{equation*}
where the last isomorphism is by virtue of $c(a,t) \tilde e_{\lambda_k}$ being projective as a $c(a,t)$-module.
By the same argument, 
\begin{equation*}\begin{split} \grHom_{c(a,t)}&(  c(a,t) \tilde e_{\lambda_k},c(a,t) \tilde e_{\lambda_l} \langle j-i_l+i_k \rangle)\\& \cong \grHom_{c(a,t)^{(0)}}(  c(a,t) \tilde e_{\lambda_k}^{(0)},(c(a,t) \tilde e_{\lambda_l} \langle j-i_l+i_k \rangle)^{(0)})\\
&= \Hom_{c(a,t)^{(0)}}(  (c(a,t) \tilde e_{\lambda_k})^{(0)},(c(a,t) \tilde e_{\lambda_l}))\\& \cong 
\Hom_{c^{(0)} \otimes a}((c e_{\lambda_k})^{(0)} \otimes a,(c e_{\lambda_l})\otimes t^{j-i_l+i_k} )\\
& \cong \Hom_{c^{(0)}}((c e_{\lambda_k})^{(0)},(c e_{\lambda_l}))\otimes \Hom_a(a, t^{j-i_l+i_k})\\
& \cong \Hom_{c^{(0)}}((c e_{\lambda_k})^{(0)},(c e_{\lambda_l}))\otimes t^{j-i_l+i_k}.
\end{split}\end{equation*}
Note that $\grHom_{c(a,t)}(  c(a,t) \tilde e_{\lambda_k},c(a,t) \tilde e_{\lambda_l} \langle j-i_l+i_k \rangle)$ is only nonzero if $i_l-i_k-j >0$ since $x$ is concentrated in positive degrees, hence we only obtain positive tensor powers of $t$. 

Inserting the last equation back into  $\Hom^{(j)}_{c(a,t)}(x(a,t),x(a,t))$ yields
$$\Hom^{(j)}_{c(a,t)}(x(a,t),x(a,t)) \cong \underset{1\leq k \leq r}{\bigoplus} \underset{1\leq l \leq r}{\bigoplus}\Hom_a(t^{i_k}, \Hom_{c^{(0)}}((c e_{\lambda_k})^{(0)},(c e_{\lambda_l}))\otimes t^{j+i_k})$$
where the left $a$-module structure of the second argument is just that of $t^{j+i_k}$. This proves the lemma.
\endproof


\begin{lem}\label{cancelt}
The $(a,a)$-bimodule,
$\Hom_a(t^i,t^j)$ is quasi-isomorphic to the bimodule $\Hom_a(t^{i-j},a)$,  if $i >j$ and to $\Hom_a(a,t^{j-i})$ if $j \geq i$.
\end{lem}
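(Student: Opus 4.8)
The plan is to cancel the tensor powers of $t$ one factor at a time, exploiting that a Rickard tilting complex induces a derived equivalence. I would treat the case $i>j$ first; the case $j\ge i$ is symmetric, with the roles of the two arguments interchanged.

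First I would record that, for every $m\ge 0$, the complex $t^m$ (with $t^0=a$) is a bounded complex of $(a,a)$-bimodules, each of which is projective as a left $a$-module and as a right $a$-module. The class of such bimodules contains $t$ by hypothesis and is closed under $\otimes_a$: presenting a bimodule $Q$ in this class as a left-$a$-module direct summand of a free module $a^{(K)}$ exhibits $P\otimes_a Q$ as a left-$a$-module direct summand of $P^{(K)}$, and symmetrically on the right. Consequently every $\Hom_a(t^m,t^{m'})$ appearing below is a genuine $\Hom$-complex that computes the derived Hom (since $t^m$ is a bounded complex of projective left $a$-modules, hence $K$-projective), the functor $\Hom_a(t^m,-)$ preserves quasi-isomorphisms, and $t\otimes_a^L t^m=t\otimes_a t^m=t^{m+1}$ as $t$ is flat as a right $a$-module.

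The heart of the argument is that, writing $t^m=t\otimes_a t^{m-1}$ for $m\ge 1$, the map
\[
\Theta\colon \Hom_a^\bullet(t^{i-1},t^{j-1})\longrightarrow \Hom_a^\bullet(t^{i},t^{j}),\qquad \phi\longmapsto \mathrm{id}_t\otimes\phi ,
\]
is a quasi-isomorphism of $(a,a)$-bimodule complexes, where both sides carry the $(a,a)$-bimodule structure whose left action is induced by the right $a$-action on the inner factor $t^{i-1}$ (equivalently on $t^i$) and whose right action is induced by the right $a$-action on $t^{j-1}$ (equivalently on $t^j$). It is a chain map --- being the map induced on Hom-complexes by the functor $t\otimes_a-$ --- and tensoring with $\mathrm{id}_t$ on the outside leaves both of these actions untouched, so it is a bimodule map. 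On cohomology, using the identifications $H^n\Hom_a^\bullet(t^{m},t^{m'})\cong\Hom_{D(a)}(t^{m},t^{m'}[n])$, the map $H^n(\Theta)$ becomes the effect of the functor $t\otimes_a^L-$ on morphisms $t^{i-1}\to t^{j-1}[n]$; since $t$ is a Rickard tilting complex this functor is a triangle autoequivalence of $D(a)$ (\cite{Rickard}), in particular fully faithful, so $H^n(\Theta)$ is bijective for every $n$ and $\Theta$ is a quasi-isomorphism.

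Iterating $\Theta$ then finishes the proof: applying it $j$ times gives a bimodule quasi-isomorphism $\Hom_a(t^i,t^j)\simeq\Hom_a(t^{i-j},t^0)=\Hom_a(t^{i-j},a)$, which is the claim for $i>j$, while for $j\ge i$ applying it $i$ times gives $\Hom_a(t^i,t^j)\simeq\Hom_a(t^0,t^{j-i})=\Hom_a(a,t^{j-i})$. The main obstacle I anticipate is purely bookkeeping: verifying that $\Theta$ is equivariant for exactly the stated $(a,a)$-bimodule structures, and confirming that the projectivity of every bimodule in sight on each side is what legitimises using $\Hom$ in place of $RHom$ throughout. (One could instead avoid invoking full faithfulness and iterate via the tensor--hom adjunction $\Hom_a(t\otimes_a M,N)\cong\Hom_a(M,\Hom_a(t,N))$ together with the quasi-isomorphism $\Hom_a(t,t\otimes_a t^{m})\simeq t^{m}$ obtained by tensoring the defining quasi-isomorphism $a\to\Hom_a(t,t)$ with $t^{m}$ and composing with the canonical isomorphism $\Hom_a(t,t)\otimes_a t^{m}\cong\Hom_a(t,t\otimes_a t^{m})$, which is valid because $t$ is a bounded complex of finitely generated projective left $a$-modules.)
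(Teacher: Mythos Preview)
Your proposal is correct. Your main route---defining $\Theta(\phi)=\mathrm{id}_t\otimes\phi$ and proving it is a quasi-isomorphism by identifying $H^n(\Theta)$ with the action of the autoequivalence $t\otimes_a^L-$ on $\Hom_{D(a)}(t^{i-1},t^{j-1}[n])$---is a genuinely different argument from the paper's. The paper instead proceeds via the tensor--hom adjunction
\[
\Hom_a(t\otimes_a t^{i-1},t\otimes_a t^{j-1})\cong\Hom_a(t^{i-1},\Hom_a(t,t)\otimes_a t^{j-1})
\]
and pushes the defining quasi-isomorphism $a\to\Hom_a(t,t)$ through the exact functors $-\otimes_a t^{j-1}$ and $\Hom_a(t^{i-1},-)$; this is exactly the alternative you sketch in your final parenthetical remark. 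Your approach is cleaner conceptually and avoids the auxiliary isomorphism $\Hom_a(t,t\otimes_a t^{j-1})\cong\Hom_a(t,t)\otimes_a t^{j-1}$. The paper's approach, however, makes the explicit formula $\phi\mapsto(t_1'\cdots t_i'\mapsto t_1'\otimes\phi(t_2'\cdots t_i'))$ drop out mechanically (Remark~\ref{explicit}), and this formula is used later to identify the composite quasi-isomorphism with the map $\beta$ in the proof of Theorem~\ref{derivedequivalence}. It is worth observing that your $\Theta$ is in fact that same explicit map, so the two routes converge on the same quasi-isomorphism.
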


\proof
Since $t$ is a Rickard tilting complex, 
we have a quasi-isomorphism $a \rightarrow \Hom_a(t,t)$. 
For $i=0$, or $j=0$, the claim is trivial, so assume inductively that it holds for $\Hom_a(t^{i-1},t^{j-1})$.
Then
\begin{equation*}\begin{split}
\Hom_a(t^i,t^j) &= \Hom_a(t \otimes_a t^{i-1},t \otimes_a t^{j-1})\\
&\cong \Hom_a(t^{i-1},\Hom_a(t,t \otimes_a t^{j-1}))\\
&\cong \Hom_a(t^{i-1},\Hom_a(t,t) \otimes_a t^{j-1})
\end{split}\end{equation*}

the last isomorphism depending on the fact that all modules occurring in $t$ are projective.
Again, since all modules are projectives on both sides, 
both functors $- \otimes_a t^{j-1}$ and $\Hom_a(t^{i-1},-)$ are exact, hence the quasi-isomorphism 
$a \rightarrow \Hom_a(t,t)$ remains a quasi-isomorphism under composition with both functors, 
and we obtain the desired quasi-isomorphism 
$\Hom_a(t^{i-1},a \otimes_a t^{j-1})\rightarrow \Hom_a(t^{i-1},\Hom_a(t,t) \otimes_a t^{j-1})$, 
which completes the induction.
\endproof

\begin{remark}\label{explicit} Note that the composite quasi-isomorphism $\Upsilon$ 
\begin{equation*}
\begin{split}\Hom_a(t^{i-1}, a\otimes_a t^{j-1}) &\rightarrow \Hom_a(t^{i-1}, \Hom_a(t,t) \otimes_a t^{j-1}) \\
&  \rightarrow  \Hom_a(t^{i},t^{j})\end{split}\end{equation*} 
is given by the explicit expression
\begin{equation*}
\begin{split}( t_1 \cdots t_{i-1} \mapsto 1 \otimes f(t_1 \cdots t_{i-1}))&\mapsto ( t_1 \cdots t_{i-1} \mapsto (t_i \mapsto t_i) \otimes f(t_1 \cdots t_{i-1}))\\
& \mapsto (t'_1 \cdots t'_{i}  \mapsto  t'_1 f(t_2' \cdots t'_i)).\end{split}\end{equation*} 
\end{remark}

\begin{lem}\label{perf}
$x(a,t) \in c(a,t)\perf$.
\end{lem}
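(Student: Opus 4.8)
The plan is to show that $x(a,t)$, which we already know by Definition~\ref{x(a,t)} decomposes as the $c(a,t)$-module $\bigoplus_{1 \leq k \leq r} c(a,t)\tilde e_{\lambda_k}\langle -i_k\rangle \otimes_a t^{i_k}$, lies in $c(a,t)\perf$. First I would observe that, since each $i_k > 0$, the module $c(a,t)\tilde e_{\lambda_k}$ is a summand of the free module $c(a,t)$, hence certainly lies in $c(a,t)\perf$; the issue is the twisting factor $t^{i_k}$, which is a bounded complex of $(a,a)$-bimodules, projective on both sides. So it suffices to prove that $c(a,t)\tilde e \otimes_a t^m$ lies in $c(a,t)\perf$ for every idempotent $e \in c$ and every $m \geq 0$, where the left $c(a,t)$-action is through the first factor and the complex structure is the total differential.

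Next I would proceed by induction on $m$. For $m=0$ the module $c(a,t)\tilde e$ is projective, so lies in $c(a,t)\perf$. For the inductive step, write $t^m = t \otimes_a t^{m-1}$, and use that $t$, being a Rickard tilting complex for $a$, is a bounded complex whose terms are finitely generated projective $(a,a)$-bimodules; in particular $t$ is built from $\add(a\otimes_F a)$ by finitely many shifts and cones. Tensoring on the right of $c(a,t)\tilde e$ with the building block $a\otimes_F a$ and then with $t^{m-1}$ produces, up to summands, a copy of $c(a,t)\otimes_a t^{m-1}$; shifts and cones are preserved by the exact functor $c(a,t)\tilde e \otimes_? -$ since all modules in sight are projective on the relevant side. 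Thus $c(a,t)\tilde e \otimes_a t^m$ is obtained from finitely many shifts, summands and extensions of objects of the form $c(a,t)e' \otimes_a t^{m-1}$ for idempotents $e'$, which lie in $c(a,t)\perf$ by the inductive hypothesis. Since $c(a,t)\perf$ is closed under shifts, summands and extensions, the inductive step follows.

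Assembling: $x(a,t)$ is a finite direct sum of shifts of modules $c(a,t)\tilde e_{\lambda_k}\otimes_a t^{i_k}$, each of which lies in $c(a,t)\perf$ by the induction, and $c(a,t)\perf$ is closed under finite direct sums and shifts, so $x(a,t) \in c(a,t)\perf$.

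The main obstacle I anticipate is making precise the step "tensoring with the building block $a\otimes_F a$ yields a copy of $c(a,t)$": one must check that $c(a,t)\tilde e \otimes_a (a\otimes_F a)$ really is (a summand of) the free $c(a,t)$-module, i.e.\ that the twisted multiplication in $c(a,t)$ is compatible with freely appending one more tensor factor of $t$ — this is essentially the identity $c(a,t)\tilde e \otimes_a (a \otimes_F a) \cong c(a,t)\tilde e \otimes_F a$ together with the observation that appending a degree does not interfere with the $c$-grading. One also has to be slightly careful that the functor $c(a,t)\tilde e\otimes_a -$ applied to a term of $t$ lands again among modules of the form $c(a,t)e'\otimes_a(\text{shorter complex})$ rather than something genuinely new; here the key point, used already in the proof of Lemma~\ref{homsapart}, is that $c(a,t)\otimes_a(a\otimes_F M) \cong c(a,t)\otimes_F M$ for any vector space $M$, so the twisting only ever lengthens the formal tensor string and never mixes with the projective $c(a,t)$-structure. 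Once these bookkeeping identities are in hand the induction is routine.
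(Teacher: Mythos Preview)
Your approach is essentially the paper's, but there is one genuine inaccuracy and one unnecessary complication.

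The inaccuracy: you assert that the terms of $t$ are ``finitely generated projective $(a,a)$-bimodules'', i.e.\ lie in $\add(a\otimes_F a)$. The paper's definition of a Rickard tilting complex only requires the terms to be projective as left $a$-modules and as right $a$-modules separately, which is strictly weaker. So your building-block argument from $a\otimes_F a$ is not justified by the hypotheses.

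The complication: once this is corrected, the induction on $m$ evaporates. Since every term of $t$ is projective on the left and on the right, an easy check shows every term of $t^m$ is projective on the left. Hence for a projective $c(a,t)$-module $P$, each term of $P\otimes_a t^m$ is a summand of a finite direct sum of copies of $P$, so is itself projective over $c(a,t)$. Thus $P\otimes_a t^m$ is a bounded complex of projective $c(a,t)$-modules and lies in $c(a,t)\perf$ directly. This is precisely the paper's argument (``as $t\in a\perf$, $P\otimes_a t$ \dots\ is itself a complex of projective $c(a,t)$-modules''); it uses only one-sided projectivity and needs no inductive peeling of tensor factors. Your worry in the final paragraph about compatibility of the twisted multiplication is therefore moot: the only bookkeeping identity needed is $P\otimes_a a \cong P$.
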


\proof
We know that all summands of $x$ are projective $c$-modules, 
possibly shifted by a positive degree. 
Hence $x(a,t)$ is an iterated extension of projective $c(a,t)$-modules and projective $c(a,t)$-modules, 
tensored with further powers of $t$. 
However, as $t \in a\perf$, 
$P\otimes_a t$ for a projective $c(a,t)$-module $P$ is itself a complex of projective $c(a,t)$-modules, 
hence in $c(a,t) \perf$. 
So, $x(a,t)$ is a finite iterated extension of elements in $c(a,t) \perf$, hence in $c(a,t) \perf$ itself.
\endproof

\begin{lem}\label{gen}
$x(a,t)$ generates $D_{dg}(c(a,t))$.
\end{lem}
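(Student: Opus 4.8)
\emph{Proof plan.} The plan is to prove the equivalent statement that the free module ${}_{c(a,t)}c(a,t)$, which is the canonical compact generator of $D_{dg}(c(a,t))$, lies in the thick subcategory $\mathrm{thick}(x(a,t))$ generated by $x(a,t)$. This suffices because, by Lemma \ref{perf}, $x(a,t)$ is a perfect complex, and a compact object $S$ of a compactly generated triangulated category generates the category as a localizing subcategory exactly when it generates the subcategory of compact objects, that is, exactly when the canonical compact generator lies in $\mathrm{thick}(S)$.

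First I would record the behaviour of the operation $M\mapsto M(a,t)$ on morphisms and triangles. Restricted to positively graded bounded complexes of graded-projective $c$-modules it is additive, commutes with the homological shift, and takes a degreewise split short exact sequence to a degreewise split short exact sequence, because in each standard degree $j$ it is simply $-\otimes_F t^j$, which is exact; so it induces a triangulated functor into $D_{dg}(c(a,t))$, which by Lemma \ref{preserveqim} moreover sends quasi-isomorphisms to quasi-isomorphisms. The one point needing attention is that $-(a,t)$ does not commute with the standard shift $\langle\,\rangle$: unwinding Definitions \ref{c(a,t)} and \ref{x(a,t)} yields, for $M$ positively graded and $j\ge 0$, a natural isomorphism of left $c(a,t)$-modules
$$(M\langle -j\rangle)(a,t)\ \cong\ M(a,t)\otimes_a t^{j}.$$
Next I would check that such tensor twists never leave $\mathrm{thick}(x(a,t))$. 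Both $x(a,t)$ and $c(a,t)$ are bounded complexes of projective right $a$-modules — each is a finite direct sum of pieces of the shape $(\,\cdot\,)\otimes_F t^m$, and $t^m$ is such a complex since $t$ is projective on both sides — so the functors $x(a,t)\otimes_a-$ and $c(a,t)\otimes_a-$ are exact and carry ${}_a a$ to $x(a,t)$, respectively $c(a,t)$. Because $t$ is a Rickard tilting complex, $t\otimes^L_a-$ is a self-equivalence of $D(a)$, hence each power $t^{j}$ lies in $a\perf$ and is itself a generator of $a\perf$; thus $x(a,t)\otimes_a t^{j}\in\mathrm{thick}(x(a,t))$ for all $j\ge 0$, and conversely ${}_a a\in\mathrm{thick}_{D(a)}(t^{j})$ gives $c(a,t)\in\mathrm{thick}\bigl(c(a,t)\otimes_a t^{j}\bigr)$.

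Finally I would feed in that $x$ is a Rickard tilting complex for $c$: since $x$ is a tilting complex (cf. \cite{Rickard}), applied over the graded algebra $c$ the module $c$ lies in the thick subcategory of the bounded derived category of graded $c$-modules generated by the standard shifts $x\langle k\rangle$, and boundedness forces only finitely many $k$ to occur; so $c$ is quasi-isomorphic to a bounded complex of graded-projective $c$-modules assembled by finitely many cones from summands of such $x\langle k\rangle[n]$. Shifting this assembly up by a sufficiently large $K$ — which replaces $c$ by the positively graded module $c\langle -K\rangle$ and each $x\langle k\rangle$ by the positively graded complex $x\langle k-K\rangle=x\langle -(K-k)\rangle$ with $K-k\ge 0$ — lets the triangulated functor $-(a,t)$ act, and by the previous two paragraphs
$$c(a,t)\otimes_a t^{K}\ \cong\ (c\langle -K\rangle)(a,t)\ \in\ \mathrm{thick}\bigl\{\,x(a,t)\otimes_a t^{K-k}\ :\ k\,\bigr\}\ \subseteq\ \mathrm{thick}(x(a,t)).$$
Applying the exact functor $c(a,t)\otimes_a-$ to ${}_a a\in\mathrm{thick}_{D(a)}(t^{K})$ then gives $c(a,t)\in\mathrm{thick}\bigl(c(a,t)\otimes_a t^{K}\bigr)\subseteq\mathrm{thick}(x(a,t))$, which is what was wanted. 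I expect this last step to be where the care is needed — matching the standard $c$-grading against the powers of $t$ that appear on the $c(a,t)$-side, and arranging the $x$-resolution of $c$ inside positively graded complexes so that $-(a,t)$ literally applies — and it is exactly here, rather than in Lemma \ref{perf}, that one needs $t$ to be a genuine Rickard tilting complex and not merely perfect over $a$.
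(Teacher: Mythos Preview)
Your proposal is correct and follows essentially the same route as the paper: both show $c(a,t)\in x(a,t)\perf$ by taking a graded $\add x$-resolution of $c$, shifting it into positive standard degrees so that the operation $-(a,t)$ applies, deducing $c(a,t)\otimes_a t^{K}\in x(a,t)\perf$, and then using that $t$ is a tilting complex (hence $a\in t\perf$) to descend from $c(a,t)\otimes_a t^{K}$ to $c(a,t)$ itself. Your write-up is a touch more explicit about the triangulated-functor behaviour of $-(a,t)$ and the identity $(M\langle -j\rangle)(a,t)\cong M(a,t)\otimes_a t^{j}$, which the paper uses implicitly.
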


\proof
First note that $c(a,t)$ generates $D_{dg}(c(a,t))$, i.e. 
$$\textrm{ if }\Hom_{D_{dg}(c(a,t))}(c(a,t),M[i])=0 
\textrm{ for all }i \in \ZZ, \textrm{ then } M=0.$$
Note also that under the tilting equivalence between $c$ and $d$,  $c$ corresponds to $\Hom_c(x,c)$, and $x$ corresponds to $d$. Hence, since $\Hom_c(x,c)$ is in $d\perf$, $c$ is also in $x\perf$.

We now show that $c(a,t) \in x(a,t)\perf$. Consider an $x$-resolution $\tilde x$ of $c$ and let $j$ be the largest positive integer such that some $x_k \in \add x$ appears in $\tilde x$ as $x_k\langle -j\rangle$. Then $c \langle j \rangle$ has a resolution by objects of $\add x$ and non-negative shifts of those. Now by the same argument as in Lemma \ref{perf} we obtain that $c(a,t)\otimes_a t^j$ is in $x(a,t) \perf$ ($c(a,t)\otimes_a t^j$ is an iterated extension of some $x_i \in \add x(a,t)$ and some $x_i\otimes_a t^m$ for $x_i \in \add x(a,t)$ and $m>0$, the latter being in $x(a,t) \perf$, since $t$ is a finite complex of projectives). Now $t$ is a tilting complex, hence $a$ is in $t \perf$, say with a resolution $\tilde t$. Then $c(a,t) \otimes t^{j-1} \otimes_a \tilde t \in x(a,t) \perf$ is a resolution of $c(a,t) \otimes t^{j-1} \otimes_a a$ (since $c(a,t) \otimes t^{j-1}\otimes_a-$  is exact) and inductively we finally obtain that $c(a,t)$ is in $x(a,t) \perf$ as desired.

Now we can consider an extension $\tilde x: 0 \rightarrow x_d \rightarrow \cdots \rightarrow x_0 \rightarrow 0$ 
which is quasi-isomorphic to $c(a,t)$ in the $0$-component, and where all $x_i \in \add x(a,t)$. 
If $\Hom_{D_{dg}(c(a,t))}(x(a,t), Y[i])=0$ 
for all $i \in \ZZ$ for some $Y$, then by induction on the length of the iterated extension, 
$\Hom_{D_{dg}(c(a,t))}(\tilde x, Y[i])=0$. Indeed, there is a triangle 
$$x_d \longrightarrow \tilde x \longrightarrow \tilde x^{\leq d-1}\leadsto$$
where $\tilde x^{\leq d-1}= (0 \rightarrow x_{d-1} \rightarrow \cdots \rightarrow x_0 \rightarrow 0)$. 
From the long exact sequence we get by applying $\Hom_{D_{dg}(c(a,t))}(-, Y[i])$ 
and the inductive hypothesis that 
$\Hom_{D_{dg}(c(a,t))}(x_d, Y[i])=0 = \Hom_{D_{dg}(c(a,t))}(\tilde x^{\leq d-1}, Y[i])$, 
we obtain the claim. But since $\tilde x$ is quasi-isomorphic to $c(a,t)$ this implies that 
$\Hom_{D_{dg}(c(a,t))}(c(a,t), Y[i])=0$ for all $i \in \ZZ$, hence $Y=0$. 
This shows that $x(a,t)$ generates $D_{dg}(c(a,t))$.
\endproof

{\noindent \emph{Proof of Theorem \ref{derivedequivalence}.}}

First note that Lemma \ref{perf} and Lemma \ref{gen} show that we can apply \cite{Ke}, Theorem 3.10, to see that $c(a,t)$ is derived equivalent to  $\Hom_{c(a,t)}(x(a,t),x(a,t))$. 
Now we proceed to show that the natural map $d(a,t) \rightarrow \Hom_{c(a,t)}(x(a,t),x(a,t))$ coming
from the action of $d(a,t)$ on $x(a,t)$ is a quasi-isomorphism.

Define the vector space $\Hom_c(x,x)(a)$ as follows:

\begin{equation*}\Hom^{(j)}_c(x,x)(a):= \left\{ \begin{array}{ll}
\Hom^{(j)}_c(x,x) \otimes t^j & \hbox{if } j\geq 0\\
\Hom^{(j)}_c(x,x) \otimes \Hom(t^{-j},a) & \hbox{if } j < 0
\end{array}\right.. \end{equation*}

Note that the graded quasi-isomorphism $d \rightarrow \Hom_c(x,x)$ lifts to a quasi-isomorphism 
$d(a,t)\rightarrow \Hom_c(x,x)(a,t)$, since tensoring over $F$ with $t^j$ is an exact functor.

Now we wish to construct a quasi-isomorphism 
$$\beta: \Hom_c(x,x)(a,t) \rightarrow \Hom_{c(a,t)}(x(a,t),x(a,t)).$$ 

For an element $f_j \otimes t_1 \cdots t_j$, or $f_j \otimes h$, 
where $f_j \in \Hom_c^{(j)}(x,x)$, and $t_1 \cdots t_j \in t^j$ if $j \geq 0$, 
and where $h \in \Hom_a(t^{-j},a)$ if $j<0$, we define

$$\beta(f_j \otimes t_1 \cdots t_j)= 
(x_k \otimes t_1'\cdots t_k' \mapsto f_j(x_k) \otimes  t_1'\cdots t_k't_1 \cdots t_j),$$
$$\beta(f_j \otimes h)=$$
$$\left(x_k \otimes t_1'\cdots t_k'\mapsto
\left\{ \begin{array}{ll}
 0& \hbox{if } -j > k\\
 f_j(x_k) \otimes t_1'\cdots t_{k-(-j)}'h(t_{k-(-j)+1}'\dots t_k')  & \hbox{if } -j \leq k
\end{array}\right)\right.$$ 
The image under $\beta$ is clearly a collection of endomorphisms of $x(a,t)$ 
which commute with the action of $c(a,t)$ on the left 
(since the parts coming from $t^j$ or $\Hom_a(t^j,a)$ act from the right and $c(a,t)$ by multiplication on the left). 
Plugging the iterated version of the explicit map $\Upsilon$ given in Remark \ref{explicit} 
into the formula given in Lemma \ref{homsapart}, 
we obtain exactly our map $\beta$. 
Lemma \ref{cancelt} then implies that $\beta$ is a quasi-isomorphism.

Composing the quasi-isomorphism $d(a,t)\rightarrow \Hom_c(x,x)(a,t)$ with 
$\beta$ takes $d_l \otimes t_1\cdots t_l \in d(a,t)^{(l)}$ to 
$$(\textrm{right action of }d_l)\otimes t_1\cdots t_l \in \Hom^{(l)}_c(x,x)\otimes t^l,$$ 
then to 
$$(\textrm{right action of }d_l)\otimes (\textrm{right multiplication with }t_1\cdots t_l ),$$
an element of $\Hom_{c(a,t)}(x(a,t), x(a,t))$ 
This element is also the image of  $d_l \otimes t_1\cdots t_l \in d(a,t)^{(l)}$ under the canonical map 
$\gamma: d(a,t) \rightarrow \Hom_{c(a,t)}(x(a,t),x(a,t))$ coming from the action of $d(a,t)$ on $x(a,t)$. 
Hence $\gamma$ is the composition of two quasi-isomorphisms and therefore a quasi-isomorphism itself.
This completes the proof of the theorem.\qed

\ssection{4. The operators $\mathbb{O}_p$.}

Recall $\mathbb{O}_p$ is the endo-2-functor of $\mathcal{T}$ defined by 
$$\mathbb{O}_p(A,T) = (c_p,x_p) \ast \bA = (c_p(\bA), x_p(\bA)).$$
Here we compare the operator $\mathbb{O}_p$ with the operator $\mathcal{C}_p$, 
and consider braid group actions relative to these operators.

Quasi-hereditary algebras were invented by Cline, Parshall, and Scott \cite{CPS}, 
whilst their tilting theory was 
developed by Ringel.
We refer to Donkin's book for an account of the general theory \cite{Donkin}.

Let $\mathcal{U}$ be the sub-$2$-category of $\mathcal{T}$, consisting of pairs $(A,T)$,
where $A$ is a quasi-hereditary algebra and $T$ is a self-dual tilting bimodule.

\begin{prop} \label{COcompare}
The weak $2$-category $\mathcal{U}$ is stable under $\mathcal{C}_p$ and $\mathbb{O}_p$.
We have $\mathbb{O}_p(\bA) \cong \mathcal{C}_p(\bA)$, for $\bA \in \mathcal{U}$.
\end{prop}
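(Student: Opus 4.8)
The plan is to compare the two constructions $\mathcal{C}_p$ and $\mathbb{O}_p = \mathbb{O}_{(c_p, x_p)}$ directly, using the fact, noted in the Example following the definition of $\mathbb{O}_p$, that $c_p$ (with its bimodule $x_p$) is itself obtained by applying $\mathcal{C}_p$ to the trivial pair $(F,F)$. First I would establish that $\mathcal{U}$ is closed under both operators. For $\mathcal{C}_p$ this is essentially Koshita/Erdmann--Henke bookkeeping combined with the results recalled in Section~1: $\mathcal{C}_p(A)$ is built from a tridiagonal matrix algebra over $A$ and $T$ followed by a trivial extension and a subquotient, and one checks that quasi-heredity is inherited (the subquotient $\eta_1^{p+1}\mathcal{C}(A)\eta_1^{p+1}/\cdots$ is a "corner" quotient of a quasi-hereditary algebra for a suitable poset ordering, and $\mathcal{X}_p(A)$ is a self-dual tilting bimodule because $T$ is and trivial extensions preserve self-duality). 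For $\mathbb{O}_p$, stability of $\mathcal{U}$ should follow because $c_p(A,T) = c_p \otimes_F A[T]$ restricted to homogeneous tensors is a twisted tensor product of two quasi-hereditary algebras ($c_p$ and, via the $T$-grading, copies of $A$), so one orders the poset lexicographically by $(j, \text{poset of } c_p^{(j)}, \text{poset of }A)$ and checks the standard modules are the twisted tensor products of those of $c_p$ with those of $A$; self-dual tilting-ness of $x_p(A,T)$ follows from self-dual tilting-ness of $x_p$ over $c_p$ and of $T$ over $A$, since tensoring over $F$ preserves both $\Delta$- and $\nabla$-filtrations.

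The heart of the proposition is the isomorphism $\mathbb{O}_p(\bA)\cong\mathcal{C}_p(\bA)$. Here I would argue as follows. Unwinding Definition~\ref{c(A,T)}, $c_p(A,T) = \bigoplus_{j\geq 0} c_p^{(j)}\otimes_F T^j$ with the concatenation multiplication. On the other hand, by the Example, $c_p = \mathcal{C}_p(F) = \bigoplus_j c_p^{(j)}$ where the grading is the one recording the total number of $\xi$'s and $\eta$'s in a path; and the construction of $\mathcal{C}_p(F)$ as $\eta_1^{p+1}\mathcal{C}(F)\eta_1^{p+1}/(\text{ideal})$ exhibits $c_p^{(j)}$ explicitly in terms of paths in the bimodule-matrix algebra $\mathcal{B}$ and its trivial extension. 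The key observation is that the construction of $\mathcal{C}_p(A)$ from $(A,T)$ is "the same construction" as $\mathcal{C}_p(F)$ from $(F,F)$, but with each edge of a path now carrying a tensor factor of $T$ (one $T$ per $\xi$ or $\eta$): the tridiagonal matrix algebra $\mathcal{B}$ for $(A,T)$ is exactly the tridiagonal matrix algebra $\mathcal{B}_0$ for $(F,F)$ with the off-diagonal $F$'s replaced by $T$'s and the diagonal $F$'s by $A$'s, and the trivial extension and subquotient operations are natural in $(A,T)$ and commute with this replacement. Therefore a path in $\mathcal{C}_p(A)$ of "$c_p$-degree $j$" is canonically an element of $c_p^{(j)}\otimes_F T^{\otimes j}$, and tracking the multiplication rule in the matrix/trivial-extension algebra against the concatenation rule of Definition~\ref{c(A,T)} gives an algebra isomorphism $\mathcal{C}_p(A)\xrightarrow{\sim} c_p(A,T)$. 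The same comparison applied to the bimodules $\mathcal{X}_p(A) = \eta_1^p\mathcal{C}(A)\eta_2^{p+1}$ versus $x_p(A,T) = \bigoplus_j x_p^{(j)}\otimes_F T^j$, with $x_p = \mathcal{X}_p(F)$, yields a compatible bimodule isomorphism.

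The main obstacle I anticipate is the explicit bookkeeping in the preceding paragraph: making precise the statement that "$\mathcal{C}_p$ applied to $(A,T)$ equals $\mathcal{C}_p$ applied to $(F,F)$ with $F$'s replaced by $A$'s and $T$'s", and in particular checking that the $\mathbb{Z}_+$-grading used to define $\mathbb{O}_p$ (sum of the $\xi$-degree and $\eta$-degree on $c_p$) matches the grading on $\mathcal{C}(A)$ along which one takes the homogeneous-tensor subalgebra — i.e. that each application of a matrix-unit times a $T$ in $\mathcal{B}$, and each hit of the dual summand $\mathcal{B}^{(*)}$ in the trivial extension, increments the relevant degree by exactly one. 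One must also verify that passing to the subquotient $\cC_p(A) = \eta_1^{p+1}\cC(A)\eta_1^{p+1}/\eta_1^{p+1}\cC(A)1_{A_{p+1}}\cC(A)\eta_1^{p+1}$ is compatible with the identifications, which amounts to noting that the defining idempotents $\eta_1^{p+1}$ and $1_{A_{p+1}}$ involve only the $A_i$ blocks (hence are "constant in $T$") so the subquotient commutes with $-\otimes_F T^{(-)}$. Once this dictionary is set up carefully, the multiplicativity and bimodule-compatibility checks are routine, and combined with the first paragraph this establishes both assertions of the proposition.
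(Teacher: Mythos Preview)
Your plan has a real gap in the ``heart'' paragraph. You assert that the trivial extension operation is natural in $(A,T)$ and ``commutes with the replacement'' of $F$'s by $A$'s and $T$'s; this is false as stated, and it is precisely here that the hypotheses defining $\mathcal{U}$ enter. In $\mathcal{C}_p(A)$ the trivial-extension part $\mathcal{B}^{(*)}$ contributes copies of $T^*$ (in the sub-diagonal positions, corresponding to the arrows $\eta$) and copies of $A^*$ (in the diagonal positions of the dual, corresponding to the socle elements $\xi\eta$). In $c_p(A,T)$ the corresponding graded pieces are $c_p^{(0,1)}\otimes T$ and $c_p^{(1,1)}\otimes T^{\otimes 2}$. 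So to make your dictionary work you must identify $T^*\cong T$ and $A^*\cong T\otimes_A T$; the first is exactly self-duality of $T$, and the second is the dual of the isomorphism $A\cong \Hom_A(T,T)$, which holds because $T$ is a tilting bimodule. Without these two identifications the degree-by-degree comparison simply fails (take $T=0$ to see that $\mathcal{C}_p(A)$ still contains copies of $A^*$ while $c_p(A,T)$ does not), so your claim that a path of $c_p$-degree $j$ is ``canonically'' in $c_p^{(j)}\otimes T^{\otimes j}$ is not a formality but the content of the proof.

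The paper's argument makes exactly these two identifications explicit, using the $\mathbb{Z}_+^2$-grading on $c_p$ (by $\xi$-degree and $\eta$-degree separately) rather than the total grading: it matches $A\leftrightarrow c_p^{(0,0)}\otimes A$, $T\leftrightarrow c_p^{(1,0)}\otimes T$, $T^*\leftrightarrow c_p^{(0,1)}\otimes T$ via self-duality, and $A^*\leftrightarrow c_p^{(1,1)}\otimes T^{\otimes 2}$ via $\Hom_A(T,T)\cong A$. Once you insert these two isomorphisms into your bookkeeping, your approach and the paper's coincide. Note also that the paper does not prove stability of $\mathcal{U}$ under $\mathbb{O}_p$ directly as you propose; it quotes stability under $\mathcal{C}_p$ from earlier work and then deduces stability under $\mathbb{O}_p$ from the isomorphism $\mathbb{O}_p\cong\mathcal{C}_p$ on $\mathcal{U}$, which is shorter than your lexicographic-poset argument.
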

\begin{proof}
In previous work, we established that $\mathcal{U}$ is stable under $\mathcal{C}_p$ (\cite{MT}, Corollary 20).
It therefore suffices to show that, 
whenever $A$ is a quasi-hereditary algebra with a self-dual tilting bimodule $T$,  
$$(c_p(A,T), x_p(A,T)) \cong (\cC_p(A), \cX_p(A)).$$
To see this, we identify the sum of copies of $A \subset \cC_p(A)$ with $c_p^{0} \otimes A \subset c_p(A,T)$;
we identify the sum of copies of $T \subset \cC_p(A)$ with $c_p^{(1,0)} \otimes T \subset c(A,T)$;
we identify the sum of copies of $T^* \subset \cC_p(A)$ with $c_p^{(0,1)} \otimes T \subset c(A,T)$  by self-duality; 
we identify the sum of copies of  $A^* \subset \cC_p(A)$ with $c_p^{(1,1)} \otimes T^2 \subset c(A,T)$,
via the dual of the isomorphism 
$$T^{2*} = \Hom_F(T \otimes_A T, F) \cong \Hom_A(T,\Hom(T,F)) \cong \Hom_A(T,T) \cong A.$$ 
Note the isomorphism $\Hom_A(T,T) \cong A$ holds since $T$ is a tilting bimodule for $A$.
By functorality, the resulting isomorphism of vector spaces $\cC_p(A) \cong c(A,T)$ 
is an algebra isomorphism.
Similarly, we define an isomorphism of bimodules $\cX_p(A) \cong x(A,T)$,
identifying the sum of copies of $A \subset \cX_p(A)$ with $x_p^{(0,0)} \otimes A \subset x(A,T)$;
the sum of copies of $T \subset \cX_p(A)$ with $x_p^{(1,0)} \otimes T \subset x(A,T)$;
the sum of copies of $T^* \subset \cX_p(A)$ with $x_p^{(0,1)} \otimes T \subset x(A,T)$; 
and the sum of copies of  $A^* \subset \cX_p(A)$ with $x_p^{(1,1)} \otimes T^2 \subset x(A,T)$. 
\end{proof}

Since the pair $(F,F)$ consists of a Ringel self-dual algebra with a self-dual tilting bimodule,
upon writing $E_n$ for the algebra component $E_n$ of $\mathbb{O}_{p}^n(F,F)$,
we obtain the following corollary:

\begin{cor} \label{CEiso}
The algebra $C_n$ is isomorphic to $E_n$.
\end{cor}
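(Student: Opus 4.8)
The plan is to prove, by induction on $n$, that $\mathcal{C}_p^n(F,F)$ and $\mathbb{O}_p^n(F,F)$ are isomorphic as objects of $\mathcal{T}$, and that both lie in the sub-$2$-category $\mathcal{U}$; the corollary then follows by reading off algebra components. The base case $n=0$ is immediate, since $\mathcal{C}_p^0(F,F) = (F,F) = \mathbb{O}_p^0(F,F)$, and $(F,F)$ lies in $\mathcal{U}$ because $F$ is a (trivially) quasi-hereditary algebra which is Ringel self-dual, with $F$ itself a self-dual tilting bimodule over $F$.

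For the inductive step, suppose $\mathcal{C}_p^n(F,F) \cong \mathbb{O}_p^n(F,F)$ in $\mathcal{T}$, with both objects lying in $\mathcal{U}$. First I would observe that the operator $\mathcal{C}_p$ carries isomorphic objects of $\mathcal{T}$ to isomorphic objects: this is clear from its construction, since each building block in Definition (3) of Section 1 — passing to the infinite matrix algebra, forming the trivial extension by the linear-dual bimodule, and taking the displayed subquotient — is functorial in the pair $(A,T)$. Applying $\mathcal{C}_p$ to the inductive isomorphism then gives $\mathcal{C}_p^{n+1}(F,F) = \mathcal{C}_p\big(\mathcal{C}_p^n(F,F)\big) \cong \mathcal{C}_p\big(\mathbb{O}_p^n(F,F)\big)$. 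Since $\mathbb{O}_p^n(F,F) \in \mathcal{U}$, Proposition \ref{COcompare} gives $\mathcal{C}_p\big(\mathbb{O}_p^n(F,F)\big) \cong \mathbb{O}_p\big(\mathbb{O}_p^n(F,F)\big) = \mathbb{O}_p^{n+1}(F,F)$, and this object again lies in $\mathcal{U}$ by the stability assertion of Proposition \ref{COcompare}. This closes the induction, and taking algebra components of the isomorphism $\mathcal{C}_p^n(F,F) \cong \mathbb{O}_p^n(F,F)$ yields $C_n \cong E_n$, as claimed.

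There is no serious obstacle here: essentially all the content is already packaged in Proposition \ref{COcompare}, and what remains is bookkeeping. The one point that genuinely needs a line of justification is the functoriality of $\mathcal{C}_p$ with respect to isomorphisms — needed so that the inductive isomorphism can be transported one step up the tower — and this is routine from unwinding the definition. If one wants the sharper statement that the resulting isomorphism $C_n \cong E_n$ respects the natural $\mathbb{Z}_+$-gradings, one notes in addition that the identifications of graded components used in the proof of Proposition \ref{COcompare} are homogeneous, so the whole induction may be carried out in the category of graded algebras equipped with a graded bimodule.
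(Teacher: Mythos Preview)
Your proof is correct and follows the same route as the paper: the corollary is an immediate consequence of Proposition~\ref{COcompare} together with the observation that $(F,F)\in\mathcal{U}$, iterated along the tower. The paper states this in a single sentence preceding the corollary, while you spell out the induction explicitly and flag the (routine) functoriality of $\mathcal{C}_p$ under isomorphisms; this is the same argument, just with more bookkeeping made visible.
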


\begin{lem} \label{dgres}
Suppose that $(A,T) \in \mathcal{U}$, and $t \twoheadrightarrow T$ 
is a projective bimodule resolution of $T$.
Then the natural algebra homomorphism $c_p(A,t) \twoheadrightarrow c_p(A,T)$ is a quasi-isomorphism.
\end{lem}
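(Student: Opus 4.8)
The plan is to compare the two algebras $c_p(A,t)$ and $c_p(A,T)$ degree-by-degree with respect to the standard $\mathbb{Z}_+$-grading coming from $c_p$, and to reduce the statement to the elementary fact that tensoring a quasi-isomorphism of complexes with a fixed vector space over $F$ is again a quasi-isomorphism. Recall from Definition \ref{c(a,t)} that $c_p(A,t) = \bigoplus_{j \geq 0} c_p^{(j)} \otimes_F t^j$ and from Definition \ref{c(A,T)} that $c_p(A,T) = \bigoplus_{j \geq 0} c_p^{(j)} \otimes_F T^j$, with the differential on $c_p(A,t)$ being, in each standard homogeneous piece $c_p^{(j)} \otimes_F t^j$, the total differential on the $j$-fold tensor complex $t^j = t \otimes_A \cdots \otimes_A t$. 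So the natural map $c_p(A,t) \twoheadrightarrow c_p(A,T)$ is, in standard degree $j$, the map $\mathrm{id}_{c_p^{(j)}} \otimes_F (t^j \to T^j)$, where $t^j \to T^j$ is the induced map of complexes (with $T^j$ in degree $0$). It therefore suffices to show that $t^j \twoheadrightarrow T^j$ is a quasi-isomorphism for every $j$, and then to tensor with the finite-dimensional space $c_p^{(j)}$.

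First I would record that $t \twoheadrightarrow T$ is a quasi-isomorphism of complexes of $(A,A)$-bimodules with $t$ a bounded-above complex of projective bimodules; since $(A,T) \in \mathcal{U}$, $T$ is a tilting bimodule, in particular projective as a left and as a right $A$-module. Hence every term of $t$ is projective on both sides, and the same is true of $T$. Next I would argue by induction on $j$ that $t^j \twoheadrightarrow T^j$ is a quasi-isomorphism: the functor $- \otimes_A t$ is exact on complexes of right-$A$-projectives (each term of $t$ being left-projective makes $- \otimes_A t_k$ exact, and the total complex of an exact bicomplex concentrated in the relevant quadrant is acyclic), so from the quasi-isomorphism $t^{j-1} \to T^{j-1}$ we get a quasi-isomorphism $t^{j-1} \otimes_A t \to T^{j-1} \otimes_A t$, and from $t \to T$ together with left-projectivity of the terms of $T^{j-1}$ we get $T^{j-1} \otimes_A t \to T^{j-1} \otimes_A T = T^j$ a quasi-isomorphism; composing gives the claim. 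This is exactly the style of argument used in the proof of Lemma \ref{cancelt}, and I would cite that pattern rather than repeat the homological bookkeeping.

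Finally, tensoring the quasi-isomorphism $t^j \to T^j$ over $F$ with the fixed vector space $c_p^{(j)}$ preserves quasi-isomorphism (flatness over a field), exactly as in the proof of Lemma \ref{preserveqim}. Summing over $j$, the map $c_p(A,t) = \bigoplus_j c_p^{(j)} \otimes_F t^j \to \bigoplus_j c_p^{(j)} \otimes_F T^j = c_p(A,T)$ is a quasi-isomorphism of complexes; since it is manifestly an algebra homomorphism by construction of the multiplications in Definitions \ref{c(a,t)} and \ref{c(A,T)}, it is a quasi-isomorphism of dg algebras. The one point requiring a little care — and the place I would be most careful in writing the details — is the exactness of $- \otimes_A t$ and $T^{j-1} \otimes_A -$ on the relevant complexes: this is where one genuinely uses that $T$, and every term of $t$, is projective on \emph{both} sides, a hypothesis guaranteed by membership in $\mathcal{U}$ and by $t$ being a projective bimodule resolution. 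Everything else is formal.
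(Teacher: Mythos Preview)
Your overall strategy---decompose by standard degree, prove $t^j \to T^j$ is a quasi-isomorphism for each $j$, then tensor over $F$ with $c_p^{(j)}$---matches the paper's exactly. The gap is in the inductive step. You assert that ``$T$ is a tilting bimodule, in particular projective as a left and as a right $A$-module,'' and later invoke ``left-projectivity of the terms of $T^{j-1}$'' to conclude that $T^{j-1} \otimes_A t \to T^{j-1} \otimes_A T$ is a quasi-isomorphism. But a (characteristic) tilting module over a quasi-hereditary algebra is almost never projective: already for $A = c_p$ the indecomposable tilting module $T(1)$ is the simple module $L(1)$, which is not projective. So the exactness of $T^{j-1} \otimes_A -$ cannot be obtained this way, and the inductive step as written fails.

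The paper supplies the missing ingredient from the quasi-hereditary structure rather than from projectivity. The terms of $t$ are projective bimodules, hence (as one-sided modules) have standard filtrations; the tilting module $T$ has the property that $T \otimes_A -$ is exact on modules with a standard filtration (this is the fact cited from Donkin). That is what forces $T \otimes_A t \to T \otimes_A T$ to be a quasi-isomorphism, and hence $t \otimes_A t \to T \otimes_A T$; since $c_p$ is concentrated in standard degrees $0,1,2$, this case $j=2$ is all that is needed. Your argument can be repaired by replacing the projectivity claim with this $\Delta$-filtration/tilting-exactness argument; everything else you wrote (the reduction to each $j$, the use of flatness over $F$, the identification of the map as an algebra homomorphism) is fine and in line with the paper.
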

\proof  Since all modules appearing in $t$ are projective as bimodules, 
tensoring with $t$ is exact. 
Hence $t \otimes_A t$ is a resolution of $T \otimes_A t$. 
But again since all modules appearing in $t$ are projective as bimodules and hence filtered by standard modules, 
tensoring with $T$ over $A$ is exact on $t$ (see \cite{Donkin}), 
hence $t \otimes_A t $ is a resolution of $T \otimes_A T$.
Since $c_p$ is concentrated in degrees $0,1$, and $2$,
we deduce the homology of $c_p(A, t)$ is indeed isomorphic to $c_p(A,T)$. 
\endproof

Let
$$Y_i:\cdots 0 \rightarrow c_p e_i \otimes e_i c_p \rightarrow c_p \rightarrow 0 \cdots,$$ 
where $e_i$ is the primitive idempotent corresponding to one of the vertices $1, \dots, p-1$ in the quiver of $c_p$. 
By a theorem of Khovanov-Seidel 
(\cite{KS}, Proposition 2.4) and Rouquier-Zimmermann \cite{RZ}, 
tensoring with this complex induces a self-equivalence of $D^b(c_p \ml)$.

\begin{thm} \label{braidgroup}
Let $(a,t) \in \mathcal{T}$, where $a$ is an associative algebra, and $_at_a$ is a 
Rickard tilting complex of bimodules. Let $1 \leq i \leq p-1$.
There is a derived self-equivalence 
$$s_i: D_{dg}(c_p(a,t) ) \overset{Y_i(a,t) \otimes^L -}{\longrightarrow} D_{dg}(c_p(a,t)).$$ 
The derived equivalences $s_i$ satisfy braid relations 
$s_is_{i+1}s_i \simeq s_{i+1}s_is_{i+1}$ and $s_is_j \simeq s_js_i$, for $|i-j| \geq 1$.
\end{thm}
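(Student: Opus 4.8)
\proof
The plan is to obtain the self-equivalences $s_i$ directly from Theorem \ref{derivedequivalence}, and then to deduce the braid relations by transporting the known braid relations among the $Y_i$ through a monoidality property of the twisted tensor construction.

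First I would check that, viewed as a complex of $(c_p,c_p)$-bimodules, $Y_i$ meets the hypotheses of Theorem \ref{derivedequivalence}. It is projective on both sides, since $c_pe_i\otimes_F e_ic_p$ is a sum of copies of $c_pe_i$ on the left and of $e_ic_p$ on the right while $c_p$ is projective on either side; it is a Rickard tilting complex, by the theorem of Khovanov--Seidel (\cite{KS}, Proposition 2.4) and Rouquier--Zimmermann \cite{RZ}; and it is positively graded for the $\mathbb{Z}_+$-grading of $c_p$ obtained by summing the $\mathbb{Z}^2_+$-grading of Remark \ref{inductiveconstruction}, once we place $c_p$ in homological degree $0$ and $c_pe_i\otimes_F e_ic_p$ in homological degree $1$ with its tensor-product standard grading, so that the multiplication differential is homogeneous of standard degree $0$. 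Applying Theorem \ref{derivedequivalence} with $c=d=c_p$ and with the given pair $(a,t)$ then shows that $Y_i(a,t)$ induces a triangle self-equivalence $s_i:=Y_i(a,t)\otimes^L_{c_p(a,t)}-$ of $D_{dg}(c_p(a,t))$, which is the first assertion.

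For the braid relations the crucial step is a \emph{monoidality} lemma for the construction $(-)(a,t)$ in the bimodule variable. If $u$ is a positively graded complex of $(c_p,c_p)$-bimodules whose terms are projective on one fixed side, and $v$ is any positively graded complex of $(c_p,c_p)$-bimodules, then $u\otimes_{c_p}v$ is again a positively graded complex of bimodules, and on homogeneous tensors the assignment $(m\otimes s)\otimes(n\otimes s')\mapsto(m\otimes_{c_p}n)\otimes(s\otimes_a s')$ --- where $s$ and $s'$ are tensor words in $t$ of lengths equal to the standard degrees of $m$ and $n$ --- defines an isomorphism of graded $(c_p(a,t),c_p(a,t))$-dg-bimodules
$$u(a,t)\otimes_{c_p(a,t)}v(a,t)\;\cong\;(u\otimes_{c_p}v)(a,t).$$
Since $u$ is one-sided projective, $u(a,t)$ is --- by the argument of Lemma \ref{perf} --- a finite complex of $c_p(a,t)$-modules projective on that side (projective $c_p(a,t)$-modules tensored with powers of $t$ stay complexes of projectives, because $t\in a\perf$), hence K-flat; so the displayed isomorphism also computes the derived tensor product, and $u(a,t)\otimes^L_{c_p(a,t)}v(a,t)\simeq(u\otimes^L_{c_p}v)(a,t)$. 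Alongside this I would note that the proof of Lemma \ref{preserveqim} applies verbatim to bimodule complexes: a graded quasi-isomorphism $u\to u'$ of positively graded complexes of $(c_p,c_p)$-bimodules induces a quasi-isomorphism $u(a,t)\to u'(a,t)$.

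With these two facts in hand, the braid relations follow by transport of structure. By the results of Khovanov--Seidel and Rouquier--Zimmermann, the complexes $Y_i$ satisfy the braid relations up to quasi-isomorphism of positively graded complexes of $(c_p,c_p)$-bimodules:
$$Y_i\otimes_{c_p}Y_{i+1}\otimes_{c_p}Y_i\;\simeq\;Y_{i+1}\otimes_{c_p}Y_i\otimes_{c_p}Y_{i+1},\qquad Y_i\otimes_{c_p}Y_j\;\simeq\;Y_j\otimes_{c_p}Y_i\ \ (|i-j|\ge2).$$
Applying $(-)(a,t)$ and using the monoidality isomorphism iteratively gives $(Y_i\otimes_{c_p}Y_{i+1}\otimes_{c_p}Y_i)(a,t)\cong Y_i(a,t)\otimes_{c_p(a,t)}Y_{i+1}(a,t)\otimes_{c_p(a,t)}Y_i(a,t)$, which (the factors being one-sided projective) is exactly a complex of bimodules inducing $s_is_{i+1}s_i$, and similarly for the other words; pushing the displayed equivalences through the quasi-isomorphism-preservation lemma then yields $s_is_{i+1}s_i\simeq s_{i+1}s_is_{i+1}$ and $s_is_j\simeq s_js_i$ for $|i-j|\ge2$. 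I expect the main obstacle to be the monoidality lemma together with its flatness clause: one must verify carefully that the displayed map is well defined and bijective --- bookkeeping of the contractions $t^j\otimes_F t^k\twoheadrightarrow t^{j+k}$ and, above all, of the standard grading, where the $\mathbb{Z}^2_+\to\mathbb{Z}_+$ summation convention for $c_p$ and for iterated tensor powers of the $Y_i$ must be tracked and the resulting quasi-isomorphisms seen to respect that grading --- and that $u(a,t)$ is genuinely flat enough over $c_p(a,t)$ to compute $\otimes^L$, which is the bimodule-variable analogue of the reasoning behind Lemmas \ref{perf} and \ref{gen}.
\endproof
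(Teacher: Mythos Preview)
Your proposal is correct and follows essentially the same route as the paper: apply Theorem \ref{derivedequivalence} with $c=d=c_p$ and $x=Y_i$ to obtain the self-equivalences, then use the monoidality $u(a,t)\otimes_{c_p(a,t)}v(a,t)\cong(u\otimes_{c_p}v)(a,t)$ together with Lemma \ref{preserveqim} to transport the Khovanov--Seidel/Rouquier--Zimmermann graded braid relations for the $Y_i$ to $c_p(a,t)$. The only minor difference is that the paper spells out why the triple-product quasi-isomorphism is \emph{graded} (passing through $Y_i'$ and the relation $Y_i\otimes Y_{i+1}\otimes Y_i'\simeq Y_{i+1}'\otimes Y_i\otimes Y_{i+1}$ from \cite{KS}), whereas you assert this directly; conversely, you are more careful than the paper about the K-flatness needed to identify the monoidal isomorphism with the derived tensor product.
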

\proof
We apply the theory of previous section $3$ to $c_p$. 
The role of the algebra $c$ and $d$ is played by the algebra $c_p$ 
and that of the complex $x$ by the Rickard tilting complex $Y_i$.
It follows from Theorem \ref{derivedequivalence}, that we have a derived equivalence $s_i$ as stated.

We now prove the braid relations.
Define $Y_i':= 0 \rightarrow c_p \rightarrow c_pe_i \otimes e_i c_p \langle -1 \rangle \rightarrow 0$, 
where $c_p$ is in homological degree zero. 
Then $Y_i \otimes_{c_p} Y_i'$ and $Y_i' \otimes_{c_p} Y_i$ are both quasi-isomorphic to $c_p$, 
as graded complexes (see \cite{KS}, Proposition 2.4). 

Furthermore, $Y_i \otimes_{c_p} Y_{i+1} \otimes_{c_p} Y_i'$ is quasi-isomorphic to 
$Y_{i+1}' \otimes_{c_p} Y_i \otimes_{c_p} Y_{i+1}$, 
again as graded complexes (see the proof \cite{KS}, Theorem 2.5). 
From this it follows that $Y_i \otimes_{c_p} Y_{i+1} \otimes_{c_p} Y_i$ 
is quasi-isomorphic to $Y_{i+1} \otimes_{c_p} Y_i \otimes_{c_p} Y_{i+1}$ 
(by tensoring both complexes on the right with $Y_i$ and on the left with $Y_{i+1}$). 
Now note that, in the setup of Definition \ref{x(a,t)}, 
it is clear that for two bimodules $x$ and $y$, 
$x(a,t) \otimes_{c(a,t)} y(a,t)$ is isomorphic to $(x \otimes_c y)(a,t)$.
Therefore $s_is_{i+1}s_i$ is represented by $(Y_i \otimes Y_{i+1} \otimes Y_i)(a,t)$, 
and $s_{i+1}s_is_{i+1}$ is represented similarly. Now Lemma \ref{preserveqim} implies the Theorem.
\endproof

\begin{cor} \label{braidonalgebra}
If $(A,T) \in \mathcal{U}$, then the derived category $D^b(c_p(A,T) \ml)$ admits a weak braid group action.
\end{cor}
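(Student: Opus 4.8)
The plan is to pass to a differential graded model, invoke Theorem \ref{braidgroup}, and then transport the resulting action back along the quasi-isomorphism of Lemma \ref{dgres}. First I would choose a projective bimodule resolution $t \twoheadrightarrow T$ of $T$ over $A$, and check that $_at_a := {}_At_A$ is a Rickard tilting complex of bimodules. Its terms are projective on both sides by construction, and (since the terms are projective) $\Hom_A(t,t)$ computes $RHom_A(T,T)$; because $T$ is a tilting bimodule in the sense defining $\mathcal{U}$, the higher self-extensions of $T$ vanish and $\End_A(T) \cong A$ via right multiplication, so the right multiplication morphism $A \to \Hom_A(t,t)$ is a quasi-isomorphism, and by self-duality of $T$ the same holds for left multiplication. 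Hence $(A,t) \in \mathcal{T}$ with $t$ a Rickard tilting complex, which is precisely the input required by Theorem \ref{braidgroup}.

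Applying Theorem \ref{braidgroup} with $(a,t) = (A,t)$ then produces, for each $1 \leq i \leq p-1$, a derived self-equivalence $s_i = Y_i(A,t) \otimes^{L}_{c_p(A,t)} -$ of $D_{dg}(c_p(A,t))$, and these satisfy $s_is_{i+1}s_i \simeq s_{i+1}s_is_{i+1}$ together with $s_is_j \simeq s_js_i$ for $|i-j| \geq 1$; equivalently, a weak action of the braid group on $p$ strands on $D_{dg}(c_p(A,t))$.

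It remains to transport this action. Lemma \ref{dgres} gives an algebra quasi-isomorphism $c_p(A,t) \twoheadrightarrow c_p(A,T)$, hence a triangle equivalence $D_{dg}(c_p(A,t)) \simeq D_{dg}(c_p(A,T))$. Since $c_p(A,T)$ is concentrated in homological degree zero, $D_{dg}(c_p(A,T))$ is the ordinary derived category of $c_p(A,T)\ml$, inside which $D^b(c_p(A,T)\ml)$ sits as a full triangulated subcategory. Because each $s_i$ is given by tensoring with $Y_i(A,t)$, a bounded complex of $c_p(A,t)$-bimodules projective as a one-sided module on each side, the transported functors carry bounded complexes with finitely generated cohomology to complexes of the same kind, so they restrict to self-equivalences of $D^b(c_p(A,T)\ml)$ still satisfying the braid relations. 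This furnishes the desired weak braid group action, establishing Corollary \ref{braidonalgebra}.

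\textbf{Main obstacle.} The one genuinely non-formal point is the verification that the projective resolution $t$ of $T$ is a Rickard tilting complex, i.e.\ that $RHom_A(T,T) \simeq A$ with the two multiplication maps being quasi-isomorphisms; this rests on the tilting-bimodule hypotheses built into $\mathcal{U}$ (rigidity of $T$ and $\End_A(T) \cong A$) together with self-duality, and it is the analogue here of the $\Hom_A(T,T) \cong A$ input used in the proof of Proposition \ref{COcompare}. Once this is in place, the identification $D_{dg}(c_p(A,t)) \simeq D_{dg}(c_p(A,T))$ via Lemma \ref{dgres} and the restriction of the $s_i$ to $D^b$ are routine.
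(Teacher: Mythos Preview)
Your proposal is correct and follows essentially the same route as the paper: take a projective bimodule resolution $t$ of $T$, verify it is a Rickard tilting complex, apply Theorem~\ref{braidgroup} to obtain the $s_i$ on $D_{dg}(c_p(A,t))$, and transport along the quasi-isomorphism of Lemma~\ref{dgres}. The only noteworthy difference is in the final restriction to $D^b$: the paper argues that the $s_i$, being equivalences, preserve compact objects, and then identifies compacts with $c_p(A,T)\perf = D^b(c_p(A,T)\ml)$ using that $c_p(A,T)$ has finite global dimension (it lies in $\mathcal{U}$ by Proposition~\ref{COcompare}, hence is quasi-hereditary); your direct argument via boundedness of $Y_i(A,t)$ tacitly relies on the same finiteness (to make $t$, and hence $Y_i(A,t)$, bounded), so you should invoke the finite global dimension of $A$ explicitly.
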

\proof This follows from Theorem \ref{braidgroup}. 
Let $t$ denote a projective bimodule resolution of $_A T_A$. Since $A$ is quasi-hereditary, 
it has finite global dimension \cite{CPS}.
Therefore, since $T$ is a tilting module, $t$ is a Rickard tilting complex.
We have quasi-isomorphism $c_p(A, t) \rightarrow c_p(A, T)$ by Lemma \ref{dgres}, 
which induces an equivalence between $D(c_p(A,T) \ml)$ and $D_{dg}(c_p(A,T))$ 
(see \cite{Ke2}, Proposition 6.2).  
The self-equivalence $s_i$ therefore defines a self-equivalence $s_i$ of $D(c_p(A,T) \ml)$, 
under which compact objects map to compact objects.
Since compact objects in $D(c_p(A,T) \ml)$ can be identified with $c_p(A,T) \perf$, 
by an observation of Rickard,
which can in turn be identified with $D^b(c_p(A,T) \ml)$ because $c_p(A,T)$ has finite global dimension, 
the self-equivalence $s_i$ restricts to a self-equivalence of $D^b(c_p(A,T) \ml)$.
By Theorem \ref{braidgroup}, these equivalences satisfy braid relations, defining a weak braid group
action on $D^b(c_p(A,T) \ml)$.
\endproof

\begin{example}
As an example, let us describe the dg-algebra $c_2(a,t)$ where $a \cong c_2$, and $t$ is a resolution of $x_2$. 
The algebra $c_2$ is a $5$-dimensional algebra with basis 
$\{e_1,e_2, \xi, \eta, \xi\eta\}$, where the underlying quiver is
$$\xymatrix{\overset{1}{\bullet}\ar@/^/[r]^{\xi} &\overset{2}{\bullet}\ar@/^/[l]^{\eta}.}$$ 
The idempotents are in degree $0$, $\xi$ and $\eta$ are in degree $1$ and $\xi\eta$ is therefore in degree $2$. 
The tilting module of $c_2$ is given by $c_2 e_1 \oplus c_2 e_1/\Rad c_2e_1 $. 
This is isomorphic to the cone of a complex 
$$t: \cdots 0 \rightarrow c_2 \rightarrow c_2e_1 \otimes e_1 c_2 \rightarrow 0...,$$ 
which is obviously projective on both sides.
To obtain $c_2(c_2, t)$ from $c_2$, 
we replace the idempotents $e_1$ and $e_2$ with copies of $c_2$, 
we replace $\xi$ and $\eta$ with copies of the complex $t$ and 
$\xi\eta$ with $t \otimes_{c_2} t$.
A dg bimodule representing the self-equivalence $s_1$ of $D_{dg}(c_2(c_2,t))$
is given by the total complex of the complex of complexes
$$... 0  \rightarrow c_2(c_2,t) (e_1 \otimes 1) \otimes_{c_2} (e_1 \otimes 1).c_2(c_2,t) \rightarrow  c_2(c_2,t) \rightarrow 0 ....$$
\end{example}

\ssection{5. Applications to the representation theory of $GL_2$.}

We gather here a number of results concerning the representations of $GL_2$, which follow from
our theory.

Recall the category of polynomial representations of $GL_2$ is equivalent to the category of finite dimensional
modules over the Schur algebra $S(2)$ \cite{Green}. 

A $\mathbb{Z}_+$-grading is said to be \emph{tight} if its degree zero component is semisimple.

\begin{thm}
The Schur algebra $S(2)$ admits a tight $\mathbb{Z}_+$-grading.
\end{thm}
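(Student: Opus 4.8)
The plan is to produce a tight grading on each basic algebra $B_n$ by exhibiting the operator $\mathbb{O}_p$ as ``tightly graded'', and then to transport these gradings to the blocks of the Schur algebras $S(2,r)$, and hence to $S(2)$. The starting point is that every block of a Schur algebra $S(2,r)$ is Morita equivalent to $B_n$ for some $n \geq 0$ (Erdmann--Henke \cite{EH}, \cite{He2}; the case $n=0$ accounting for the semisimple blocks), and that, by Proposition \ref{COcompare} and Corollary \ref{CEiso} together with Theorem \ref{allfour}, $B_n \cong C_n \cong E_n$, where $E_n$ is the algebra component of $\mathbb{O}_p^n(F,F)$. Thus it suffices to put a tight $\mathbb{Z}_+$-grading on $E_n$ and then move it along a Morita equivalence.

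Call a pair $(A,T) \in \mathcal{T}$ \emph{tightly graded} if $A = \bigoplus_{k \geq 0} A_k$ is a $\mathbb{Z}_+$-graded algebra with $A_0$ semisimple and $T = \bigoplus_{k \geq 0} T_k$ is a $\mathbb{Z}_+$-graded $A$-bimodule. I would prove that $\mathbb{O}_p$ sends tightly graded pairs to tightly graded pairs. Recall $c_p$ carries the $\mathbb{Z}_+$-grading, obtained by summing its $\mathbb{Z}_+^2$-grading, in which idempotents have degree $0$ and arrows have degree $1$; in particular $c_p^{(0)}$ is the (semisimple) span of the primitive idempotents, and likewise $x_p$ is concentrated in standard degrees $0,1,2$ by the description in Proposition \ref{COcompare}. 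Given a tightly graded $(A,T)$, I would grade $c_p(A,T) = \bigoplus_{j \geq 0} c_p^{(j)} \otimes_F T^j$ by declaring $c \otimes t_1 \cdots t_j$, with $c \in c_p^{(j)}$ and each $t_i$ homogeneous, to have degree $j + \sum_i \deg t_i$ (so that the summand $j=0$, namely $c_p^{(0)} \otimes_F A$, carries the grading of $A$), and grade $x_p(A,T)$ similarly; one checks directly from the multiplication of Definition \ref{c(A,T)} that this is a $\mathbb{Z}_+$-grading and that $x_p(A,T)$ is a graded $c_p(A,T)$-bimodule, positively graded because both $x_p$ and $T$ are. A degree-zero element forces $j=0$ and a degree-zero element of $A$, so $c_p(A,T)_0 = c_p^{(0)} \otimes_F A_0$ is semisimple, i.e. $\mathbb{O}_p(A,T)$ is again tightly graded. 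Starting from the (trivially tightly graded) pair $(F,F)$ and iterating, $\mathbb{O}_p^n(F,F)$ is tightly graded, so $E_n \cong B_n$ admits a tight $\mathbb{Z}_+$-grading (an easy induction gives $(E_n)_0 \cong F^{p^n}$).

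Finally I would transport the grading. If a finite-dimensional algebra $\Lambda$ is Morita equivalent to a $\mathbb{Z}_+$-graded algebra $B$ with $B_0$ semisimple, then $\Lambda \cong \End_B(P)$ for a projective generator $P$; taking $P$ to be a direct sum of indecomposable graded projective $B$-modules \emph{without shifts} (the relevant primitive idempotents of $B$ may be taken homogeneous of degree $0$ since $B_0$ is semisimple), the algebra $\End_B(P)$ inherits a $\mathbb{Z}_+$-grading whose degree-zero part is $\End_{B_0}(P_0)$, again semisimple. Applying this to each block of each $S(2,r)$ and taking the direct sum of the resulting gradings over all blocks yields a $\mathbb{Z}_+$-grading on $S(2) = \bigoplus_{r \geq 0} S(2,r)$ whose degree-zero component is a direct sum of semisimple algebras, hence semisimple; this is the desired tight grading.

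The genuinely delicate point is the one that makes the inductive step work: the $c_p$-grading must be \emph{added} to the grading carried inductively by the coefficient pair, not substituted for it --- using the $c_p$-grading alone would leave the degree-zero part of $\mathbb{O}_p(A,T)$ equal to $c_p^{(0)} \otimes_F A$, which is semisimple only if $A$ is. Everything else is either bookkeeping with Definition \ref{c(A,T)} or quoted input (the isomorphisms of Section 1 and the Morita classification of blocks of $S(2,r)$).
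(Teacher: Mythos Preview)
Your inductive argument that $\mathbb{O}_p$ preserves tight gradings is correct, and the conclusion that $E_n$ carries a tight $\mathbb{Z}_+$-grading follows. The paper, however, takes a shorter route: it simply observes that $A_n$ is tightly graded \emph{by definition}, since $A_n$ is the path algebra of $Q_n$ modulo an ideal generated by elements homogeneous for the path-length grading (idempotents in degree $0$, arrows in degree $1$); the isomorphism $B_n \cong A_n$ of Theorem~\ref{allfour} then transfers the grading directly. Your detour through $\mathbb{O}_p$ and $E_n$ lands on the same grading on the same algebra, but packages the construction recursively rather than reading it off the quiver presentation in one stroke. This is more in keeping with the $2$-functorial theme of the paper, at the cost of the extra bookkeeping you do; the paper's version trades that for the combinatorics already invested in Section~1.

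There is one slip to repair. It is not true that every block of $S(2,r)$ is Morita equivalent to some $B_n$: the $B_n$ are, by definition, exactly the blocks with $p^n$ simple modules, and other sizes occur (for instance, $S(2,3)$ in characteristic $3$ has a non-semisimple block with two simples). What is true, and what the paper invokes, is that every block of $S(2)$ is Morita equivalent to a quasi-hereditary quotient $B_n / B_n i B_n$ for some idempotent $i$. Since $i$ may be taken in degree $0$, the ideal $B_n i B_n$ is homogeneous and the quotient inherits the tight grading. With this correction in place, your Morita-transport argument (which the paper leaves implicit in ``The theorem follows'') completes the proof.
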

\proof
Every block of the Schur algebra $S(2)$ is Morita equivalent to the quotient of a block $B_n$ by an 
ideal $B_n i B_n$, for some $n \geq 0$ and some idempotent $i$.
Since $A_n$ is tightly graded by definition, $B_n$ is tightly graded by Theorem \ref{allfour}.
The theorem follows. 
\endproof

Recall $E_n$ was defined to be the algebra component of $\mathbb{O}_p^n(F,F)$.
As we have a natural algebra homomorphism from $c_p$ to $F$, which sends vertices $2,..,p$ to zero,
and vertex $1$ to $1_F$, we have an inherited algebra homomorphism
from the algebra component of $\mathbb{O}_p(A,T)$ to $A$, for any $(A,T) \in \mathcal{T}$.
We thus have a directed sequence of algebra homomorphisms.
$$E_0 \leftarrow E_1 \leftarrow E_2 \leftarrow ...$$
The operator $\mathbb{O}_p$ controls the rational representation theory of $GL_2$:

\begin{thm}
Every block of rational representations of $GL_2$ is equivalent to the category of
$\lim_n E_n$-modules. 
\end{thm}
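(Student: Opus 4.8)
The plan is to combine three ingredients already established: (i) the identification of blocks of rational $GL_2$-modules with categories of modules over inverse limits of the algebras $E_n$ from the introduction; (ii) Corollary \ref{CEiso}, which identifies $C_n$ with $E_n$, together with Theorem \ref{allfour}, which says $C_n \cong B_n$, the basic algebra of a Schur algebra block; and (iii) the way blocks of the Schur algebra $S(2,r)$ relate to one another under the truncation maps. The goal statement is essentially a repackaging of the construction announced in the introduction (``We prove that every block of rational $GL_2(F)$-modules is equivalent to the category of modules over an inverse limit $\lim_n E_n$ of these algebras''), so the proof should be a matter of assembling the pieces and checking the transition maps are the right ones.

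First I would recall the description of rational $GL_2$-representations in terms of Schur algebras: a block of the category of rational representations is, by a result of Donkin (and classically going back to the relation between polynomial and rational representations), equivalent to a suitable ``limit'' of blocks of the classical Schur algebras $S(2,r)$ under the maps induced by multiplication by the determinant, or equivalently by the truncation functors $S(2,r+p^N) \to S(2,r)$ for large $N$. Concretely, each block of rational $GL_2$ is obtained as an inverse limit over $n$ of blocks $B_n$ of $S(2,p^{n+1}-1)$ (or the appropriate analogue), with the connecting maps $B_{n+1} \twoheadrightarrow B_n$ being the surjections $B_{n+1} \to B_{n+1}/B_{n+1}\tilde f B_{n+1} \cong B_n$ used in Lemma \ref{AontoB}. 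Then I would invoke Theorem \ref{allfour} to replace each $B_n$ by $C_n$, and Corollary \ref{CEiso} to replace $C_n$ by $E_n$. The key compatibility check is that, under these isomorphisms, the connecting surjections $B_{n+1} \twoheadrightarrow B_n$ correspond to the natural algebra homomorphisms $E_{n+1} \to E_n$ coming from the map $c_p \to F$ sending vertices $2,\dots,p$ to zero and vertex $1$ to $1_F$ — this is what makes the directed system $E_0 \leftarrow E_1 \leftarrow E_2 \leftarrow \cdots$ match the system of Schur algebra blocks. Having matched the two inverse systems, one concludes $\lim_n E_n \cong \lim_n B_n$, and the block of rational $GL_2$-representations is equivalent to the module category of this inverse limit.

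**The hard part will be** the compatibility of the connecting maps: verifying that the isomorphisms $B_n \cong C_n \cong E_n$ can be chosen \emph{functorially} in $n$, so that they intertwine the Schur-algebra truncation surjections with the combinatorially-defined maps $E_{n+1}\to E_n$. In Lemma \ref{AontoB} the surjection $B_{n+1}\twoheadrightarrow B_n$ is realized as factoring out an idempotent ideal $B_{n+1}\tilde f B_{n+1}$, and one must check that under the identification $C_n = \mathcal{C}_p^n(F,F)$, resp. $E_n = \mathbb{O}_p^n(F,F)$, the analogous truncation is exactly the one induced by $c_p \to F$ collapsing vertices $2,\dots,p$ — this is plausible from the inductive structure of $Q_n$ described in Remark \ref{inductiveconstruction} (the last ``coordinate'' $a_n$ being $0$ corresponds to vertex $1$ of the outermost $c_p$), but it requires a careful bookkeeping of which vertices survive. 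A secondary point is to be precise about what ``equivalent to the category of $\lim_n E_n$-modules'' means: since $\lim_n E_n$ is a (pseudocompact/profinite) topological algebra, one works with the category of its (discrete, or finitely generated pseudocompact) modules, and one should note that every finite-dimensional module factors through some finite stage $E_n$, so that the module category of the limit is the colimit of the module categories of the $E_n$ along the pullback functors — matching the standard description of the block of rational $GL_2$-modules as such a colimit of Schur-algebra-block module categories.

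With those compatibilities in hand, the proof concludes formally: fix a block $\mathcal{B}$ of rational $GL_2(F)$-modules; by the Schur-algebra description it is equivalent to $\varprojlim_n (B_n\text{-mod})$ taken along the truncation surjections; by Theorem \ref{allfour} and Corollary \ref{CEiso} the tower $(B_n)$ is isomorphic, compatibly with connecting maps, to the tower $(E_n)$ with the maps $E_{n+1}\to E_n$ induced by $c_p\to F$; hence $\mathcal{B}$ is equivalent to the category of $\varprojlim_n E_n$-modules, as claimed.
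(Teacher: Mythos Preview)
Your proposal is correct and follows essentially the same route as the paper: cite the previously established fact that every block of rational $GL_2$-modules is equivalent to $\lim_n B_n$-modules (the paper attributes this to \cite{MT}), then invoke Theorem~\ref{allfour} and Corollary~\ref{CEiso} to replace $B_n$ by $E_n$. The paper's proof is two lines and does not explicitly address the compatibility of transition maps that you flag as ``the hard part''; you are being more careful than the paper itself on that point.
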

\proof
As we noted in a previous article, every block of representations of $GL_2$ is equivalent to the category of
$\lim_n B_n$-modules \cite{MT}. 
By Theorem \ref{allfour} and Corollary \ref{CEiso}, we have $B_n \cong E_n$. The theorem follows.
\endproof

\begin{thm}
The derived category of every block of the Schur algebra with $p^r$ irreducible modules 
admits a weak action of the braid group $Br_{p-1}$.
\end{thm}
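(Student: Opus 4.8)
The plan is to reduce the assertion to Corollary \ref{braidonalgebra} by combining the algebra isomorphisms of Section 1 with the comparison of the operators $\mathcal{C}_p$ and $\mathbb{O}_p$ in Proposition \ref{COcompare}. First I would pass to basic algebras: a block $\mathcal{B}$ of a Schur algebra $S(2,s)$ with $p^r$ irreducible modules is Morita equivalent to its basic algebra, and by the work of Erdmann--Henke \cite{EH} this basic algebra depends only on the number $p^r$ of simple modules, hence is the algebra $B_r$ of Section 1. Since a Morita equivalence induces a triangle equivalence $D^b(\mathcal{B} \ml) \cong D^b(B_r \ml)$, it suffices to produce a weak $Br_{p-1}$-action on $D^b(B_r \ml)$ and transport it. The case $r = 0$ is immediate (the block is semisimple), so I may assume $r \geq 1$.

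Next I would recognise $B_r$ as a twisted tensor product over $c_p$. By Theorem \ref{allfour}, $B_r \cong C_r$; by Corollary \ref{CEiso} (which rests on Proposition \ref{COcompare}), $C_r \cong E_r$, the algebra component of $\mathbb{O}_p^r(F,F)$. Setting $(E_{r-1},T_{r-1}) := \mathbb{O}_p^{r-1}(F,F)$, the definition of $\mathbb{O}_p$ gives $E_r = c_p(E_{r-1},T_{r-1})$ in the sense of Definition \ref{c(A,T)}. Moreover $(F,F)$ lies in the sub-$2$-category $\mathcal{U}$ — being a quasi-hereditary algebra with a self-dual tilting bimodule — and $\mathcal{U}$ is stable under $\mathbb{O}_p$ by Proposition \ref{COcompare}, so $(E_{r-1},T_{r-1}) = \mathbb{O}_p^{r-1}(F,F) \in \mathcal{U}$.

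Finally I would invoke Corollary \ref{braidonalgebra} for the pair $(A,T) = (E_{r-1},T_{r-1}) \in \mathcal{U}$: it yields a weak braid group action of $Br_{p-1}$ on $D^b\big(c_p(E_{r-1},T_{r-1}) \ml\big) = D^b(E_r \ml) \cong D^b(B_r \ml)$, the braid generators acting by tensoring with the complexes $Y_i(E_{r-1},T_{r-1})$ of Theorem \ref{braidgroup}. Transporting along the Morita equivalence of the first step gives the required action on $D^b(\mathcal{B} \ml)$.

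The step needing the most care is the first one: one must record precisely that every block with $p^r$ simple modules is Morita equivalent to the single algebra $B_r$ — this is where the Erdmann--Henke classification enters — and keep track of the degenerate cases ($r = 0$, and $p = 2$ where $Br_{p-1}$ is trivial). The rest is a bookkeeping assembly of results already proved, requiring no genuinely new argument.
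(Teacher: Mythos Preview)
Your proof is correct and follows essentially the same route as the paper: reduce to the basic algebra $B_r$, identify it with $E_r = c_p(E_{r-1},T_{r-1})$ via Theorem \ref{allfour} and Corollary \ref{CEiso}, observe that $(E_{r-1},T_{r-1}) \in \mathcal{U}$ by Proposition \ref{COcompare}, and apply Corollary \ref{braidonalgebra}. You spell out more detail than the paper does --- in particular the explicit identification of the pair $(A,T)$ and the degenerate cases $r=0$ and $p=2$ --- but the argument is the same.
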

\proof
Every block of the Schur algebra with $p^r$ irreducible modules is Morita equivalent to $B_n$,
hence to $E_n$ by Theorem \ref{allfour}. 
But $E_n$ is the algebra component $c_p(A,T)$ of $\mathbb{O}_p(A,T)$, 
for some $(A,T) \in \mathcal{U}$, by Proposition \ref{COcompare}.
Therefore, the derived category of $E_n$ admits a weak braid group action, by Corollary \ref{braidonalgebra} 
\endproof

\bibliographystyle{amsplain}
\bibliography{qh2}

\end{document}